\DeclareMathAlphabet{\mathpzc}{OT1}{pzc}{m}{it}
\newtheorem{theorem}{Theorem}
\newtheorem{proposition}[theorem]{Proposition}
\newtheorem{corollary}[theorem]{Corollary}
\newtheorem{lemma}[theorem]{Lemma}
\theoremstyle{definition}
\newtheorem{definition}{Definition}
\newtheorem{example}{Example}
\newtheorem{remark}{Remark}
\title{Twistor interpretation of slice regular functions}
\author{Amedeo Altavilla}
\address{Amedeo Altavilla, Dipartimento di Ingegneria Industriale e Scienze Matematiche 
Universit\`a Politecnica delle Marche, Via Brecce Bianche,
I-60131 Ancona, Italy, amedeoaltavilla@gmail.com}
\subjclass[2010]{ 53C28, 30G35, 53C55, 14J26 }
\keywords{Twistor spaces, Slice regular functions, Functions of hypercomplex variables, Rational and ruled surfaces.}
\begin{document}

\begin{abstract}
Given a slice regular function $f:\Omega\subset\mathbb{H}\to \mathbb{H}$, with $\Omega\cap\mathbb{R}\neq \emptyset$, it is possible to lift it to 
a surface in the twistor space $\mathbb{CP}^{3}$ of $\mathbb{S}^4\simeq \mathbb{H}\cup \{\infty\}$ (see~\cite{gensalsto}).
In this paper we show that the same result is true if one removes the hypothesis $\Omega\cap\mathbb{R}\neq \emptyset$ on the domain of the function $f$. Moreover we find that if a surface $\mathcal{S}\subset\mathbb{CP}^{3}$ contains
the image of the twistor lift of a slice regular function, then $\mathcal{S}$ has to be ruled by lines.
Starting from these results we find all the projective classes of algebraic surfaces up to degree 3 in $\mathbb{CP}^{3}$
that contain the lift of a slice regular function. In addition we extend and further explore 
the so-called twistor transform, that is a curve in $\mathbb{G}r_2(\mathbb{C}^4)$ which, given a slice regular function, returns the arrangement of lines whose lift carries on. With the explicit expression of the twistor lift and of the twistor transform of a slice regular function we exhibit the set of slice regular functions whose
twistor transform describes a rational line inside $\mathbb{G}r_2(\mathbb{C}^4)$, showing
the role of slice regular functions not defined on $\mathbb{R}$.
At the end we study the twistor lift of a particular slice regular function not defined over the reals. This example
shows the effectiveness of our approach and opens some questions.
\end{abstract}

\maketitle






\section{Introduction}
This paper is devoted to further investigating the relation between orthogonal complex structures
 on subdomains of $\mathbb{R}^4$ and the recent theory of quaternionic 
 slice regular functions. 
 
Given a $2n$-dimensional oriented Riemannian manifold $(\Omega,g)$, an
\textit{almost complex structure}  over $\Omega$ is an endomorphism 
$J:T\Omega\rightarrow T\Omega$, defined over the tangent bundle, 
such that $J^{2}=-id$. An almost complex structure is said to be a \textit{complex structure} if $J$ is \textit{integrable}, 
meaning, for instance, that the associated \textit{Nijenhuis} tensor,
\begin{equation*}
 N_J(X,Y)=[X,Y]+J[JX,Y]+J[X,JY]-[JX,JY],
\end{equation*}
vanishes everywhere for each couple of tangent vectors $X$ and $Y$; it is said to be \textit{orthogonal} if it preserves the
Riemannian product, i.e. $g(JX,JY)=g(X,Y)$ for each couple of tangent vectors $X$ and $Y$
and preserves the orientation of $\Omega$. Collecting everything, an \textit{orthogonal complex
structure} (OCS) is an almost complex structure which is integrable and orthogonal.


The condition for $J$ to be an OCS depends only on the conformal 
class of $g$, so, if $\Omega$ is a four dimensional open subset of $\mathbb{R}^4$
endowed with the Euclidean metric $g_{Eucl}$, then
 the resulting theory is  invariant under the group $SO(5,1)$ of
conformal automorphisms of $\mathbb{R}^4\cup\{\infty\}\simeq\mathbb{H}\cup\{\infty\}\simeq\mathbb{S}^4$
endowed with the standard round metric $g_{rnd}$.

For any open subset $\Omega$ of $\mathbb{R}^{4}$ it is possible to construct standard OCS's, called 
\textit{constant}, in the following way: 
think $\mathbb{R}^{4}$ as the space of real quaternions $\mathbb{H}$ and define the set of imaginary units 
as follows
$$
\mathbb{S}:=\{x\in\mathbb{H}\,|\,x^2=-1\}.
$$
For any $q\in\mathbb{S}$, i
identifying each tangent space
 $T_{p}\Omega$ with $\mathbb{H}$ himself, we define the complex structure
 $\mathbb{J}_{q}$ everywhere by left 
 multiplication by $q$, i.e. $\mathbb{J}_{q}(p)v=qv$. 
 Any OCS defined globally on $\mathbb{H}$ is known to be constant (see~\cite[Proposition 6.6]{wood}), moreover
 it was proven in~\cite{salamonviac} the following result.

\begin{theorem}[\cite{salamonviac}, Theorem 1.3]
Let $J$ be an OCS of class $\mathcal{C}^1$ on $\mathbb{R}^{4}\setminus \Lambda$, where $\Lambda$
is a closed set of zero 1-dimensional Hausdorff measure. 
Then $J$ is the push-forward of the standard constant OCS on $\mathbb{R}^{4}$ under a conformal transformation
and $J$ can be maximally extended to the complement of a point $\mathbb{R}^{4}\setminus\{p\}$.
\end{theorem}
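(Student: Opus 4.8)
The plan is to recast the statement entirely through the Penrose twistor correspondence for the conformally flat four-sphere, whose twistor space is $\mathbb{CP}^3$ with the fibration $\pi\colon\mathbb{CP}^3\to\mathbb{S}^4$ whose fibres are the real (i.e. $\sigma$-invariant) lines, $\sigma$ being the fixed-point-free antiholomorphic involution encoding the metric. The fundamental fact of Atiyah--Hitchin--Singer and Penrose is that orientation-compatible orthogonal complex structures on an open set $\Omega\subset\mathbb{S}^4$ correspond bijectively to holomorphic local sections of $\pi$ over $\Omega$: the vanishing of the Nijenhuis tensor of $J$ is precisely the holomorphicity of the associated section for the integrable complex structure on $\mathbb{CP}^3$. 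First, then, I would lift the given $\mathcal{C}^1$ OCS $J$ on $\mathbb{R}^4\setminus\Lambda$ to a holomorphic section $s$ of $\pi$, so that its image $V:=s(\mathbb{R}^4\setminus\Lambda)$ is a smooth complex surface in $\mathbb{CP}^3\setminus\pi^{-1}(\Lambda)$ meeting every fibre over $\mathbb{R}^4\setminus\Lambda$ transversally in exactly one point. A point to watch here is regularity: one must check that the $\mathcal{C}^1$ hypothesis already forces the section, and hence $V$, to be genuinely holomorphic (and thus real-analytic), so that the machinery of complex analytic geometry applies.

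Second, I would extend $V$ across the removed locus. As $\Lambda$ is closed with $\mathcal{H}^1(\Lambda)=0$ and the fibres of $\pi$ are two-real-dimensional, the preimage $\pi^{-1}(\Lambda)$ is closed with vanishing three-dimensional Hausdorff measure, $\mathcal{H}^3(\pi^{-1}(\Lambda))=0$. Since $V$ has pure complex dimension two, Shiffman's removable-singularity theorem (removability of a pure $p$-dimensional analytic set across a closed set of vanishing $\mathcal{H}^{2p-1}$ measure, here $2p-1=3$) guarantees that the closure $\overline V$ is a closed complex analytic subset of all of $\mathbb{CP}^3$; by Chow's theorem $\overline V$ is then a projective algebraic surface. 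I expect this analytic extension, together with the low-regularity bookkeeping of the first step, to be the main obstacle, since everything afterwards is formal.

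Third, I would pin down the degree $d$ of $\overline V$, and here the section property does all the work. Because the image of a continuous section over an open set is closed in the restricted total space, one has $\overline V\cap\pi^{-1}(\mathbb{R}^4\setminus\Lambda)=V$; hence for every $x\notin\Lambda$ the whole fibre $\ell_x=\pi^{-1}(x)$ fails to be contained in $\overline V$ and meets it set-theoretically only at the single point $s(x)$. But a projective line meets a degree-$d$ surface in $d$ points counted with multiplicity, and the transversality of the section to the fibre (automatic from $\pi\circ s=\mathrm{id}$, and complex because both $V$ and $\ell_x$ are complex of complementary dimension) forces intersection multiplicity one at $s(x)$. Therefore $d=1$ and $\overline V$ is a projective plane $\mathbb{CP}^2$.

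Finally, I would translate this back. The planes of $\mathbb{CP}^3$ transverse to the real structure form a single orbit of the conformal group $SO(5,1)\cong PSL(2,\mathbb{H})$ acting on twistor space, and the distinguished one yields exactly a constant structure $\mathbb{J}_q$; thus an arbitrary such plane $\overline V$ realizes the push-forward of a constant OCS by a conformal transformation. A plane $\overline V$ meets every fibre in one point except along the unique twistor line $m=\overline V\cap\sigma(\overline V)$ it contains, whose image $p=\pi(m)$ is the single base point over which the section degenerates. This shows the section, and hence $J$, extends maximally to $\mathbb{R}^4\setminus\{p\}$, completing the argument.
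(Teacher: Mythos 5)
The paper does not prove this statement: it is quoted verbatim from \cite{salamonviac} as background, with no proof supplied, so there is nothing internal to compare against. Your argument is a correct reconstruction of the twistor-theoretic proof given in the cited source --- lift $J$ to a holomorphic section, extend its image across $\pi^{-1}(\Lambda\cup\{\infty\})$ by Shiffman's theorem using $\mathcal{H}^{3}=0$, apply Chow, force degree one by the single transverse intersection with generic fibres, and identify planes with conformal images of the constant structure degenerating over the image of the unique twistor line --- with the only point left unproved being the one you yourself flag, namely that a $\mathcal{C}^1$ submanifold with $J$-invariant tangent spaces is genuinely holomorphic.
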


In the same paper it was proven the following result which completely solve the situation in a very particular case.

\begin{theorem}[\cite{salamonviac}, Theorem 1.6]\label{parabola}
 Let $\mathbb{J}$ be an OCS of class $\mathcal{C}^1$ on $\mathbb{R}^4\setminus \Lambda$, where $\Lambda$ is a round circle or a straight line, and assume
 that $\mathbb{J}$ is not conformally equivalent to a constant OCS. Then $\mathbb{J}$ is unique up to sign, and $\mathbb{R}^4\setminus \Lambda$ is a maximal domain
 of definition for $\mathbb{J}$.
\end{theorem}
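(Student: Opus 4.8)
The plan is to pass to the twistor space $\mathbb{CP}^{3}$ of $\mathbb{S}^4$ and reduce the statement to a classification of algebraic surfaces. By the Eells--Salamon correspondence, an OCS $\mathbb{J}$ on a domain $\Omega\subseteq\mathbb{S}^4$ is equivalent to a holomorphic section of the twistor fibration $\pi:\mathbb{CP}^{3}\to\mathbb{S}^4$ over $\Omega$, i.e. to a complex surface $\mathcal{S}_{0}\subset\mathbb{CP}^{3}$ meeting the generic twistor line in exactly one point. Since the OCS condition is conformally invariant and the conformal group of $\mathbb{S}^4$ acts transitively on round circles --- a straight line being a circle through $\infty$ --- I would first move $\Lambda$ to a fixed standard circle; this conformal motion lifts to a holomorphic automorphism of $\mathbb{CP}^{3}$ commuting with $\pi$ and with the fixed--point--free antiholomorphic involution $\sigma$ (the real structure), which acts on each fibre $\pi^{-1}(p)\cong\mathbb{S}^{2}$ as the antipodal map $J\mapsto -J$.

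Next I would compactify the picture. The surface $\mathcal{S}_{0}$ is already closed inside $\mathbb{CP}^{3}\setminus\pi^{-1}(\Lambda)$, so the issue is to control its closure across the three--real--dimensional set $\pi^{-1}(\Lambda)$. Once one knows that $\overline{\mathcal{S}_{0}}$ meets $\pi^{-1}(\Lambda)$ in a set of vanishing $3$--dimensional Hausdorff measure, a removable--singularity theorem of Shiffman type makes $\mathcal{S}:=\overline{\mathcal{S}_{0}}$ a closed complex--analytic subvariety of $\mathbb{CP}^{3}$, and Chow's theorem turns it into a projective algebraic surface. Establishing the required control on how the section accumulates onto the fibres over $\Lambda$ --- equivalently, the precise blow--up behaviour of $\mathbb{J}$ near $\Lambda$ --- is the first serious obstacle, since the naive dimension count for $\pi^{-1}(\Lambda)$ is borderline and soft removability does not apply directly.

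The heart of the argument is to identify $\mathcal{S}$. Since $\mathbb{J}$ is not conformally constant, $\mathcal{S}$ cannot be a hyperplane $\mathbb{CP}^{2}$ (these are the global sections of $\pi$ carrying the constant structures $\mathbb{J}_{q}$), so $\deg\mathcal{S}\geq 2$. I would then show that $\mathcal{S}$ is in fact a smooth quadric $Q$: it must be $\sigma$--invariant (an irreducible quadric containing the lift $\mathcal{S}_{0}$ also contains its $\sigma$--image, the lift of $-\mathbb{J}$, and is hence preserved by $\sigma$), and the section condition forces the generic twistor line to be a secant of $Q$ off $\Lambda$ and tangent to $Q$ over $\Lambda$, so that $\pi|_{Q}:Q\to\mathbb{S}^{4}$ is a double cover branched exactly along $\Lambda$, with $\mathcal{S}_{0}$ one of its two sheets. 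Ruling out $\deg\mathcal{S}>2$ together with the singular quadrics (cones and plane--pairs, whose branch loci are not round circles), and checking that real smooth quadrics with branch circle $\Lambda$ form a single orbit under the normalised stabiliser, is the decisive step and the main difficulty, requiring the explicit reality analysis of quadrics under $\sigma$.

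Granting that $\mathcal{S}=Q$ is a $\sigma$--invariant smooth quadric, uniqueness and maximality become formal. Over the simply connected base $\mathbb{S}^{4}\setminus\Lambda$ the branched double cover $\pi|_{Q}$ trivialises into two disjoint sheets $\mathcal{S}^{+}\sqcup\mathcal{S}^{-}$ interchanged by $\sigma$; because $\sigma$ is the fibrewise antipodal map, the OCS carried by $\mathcal{S}^{-}$ is $-\mathbb{J}$, so any such $\mathbb{J}$ is determined up to sign. Finally the two sheets come together precisely over the branch locus, i.e. over $\Lambda$, so no continuous single--valued section of $\pi$ --- hence no OCS --- can be defined at any point of $\Lambda$; this shows $\mathbb{R}^{4}\setminus\Lambda$ is a maximal domain of definition and completes the plan. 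The points I expect to fight with are, in order, the accumulation analysis across $\pi^{-1}(\Lambda)$ and, above all, the proof that the reality and branch conditions admit no surface of degree $>2$ and no quadric other than $Q$.
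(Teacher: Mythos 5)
A preliminary remark: the paper does not prove Theorem~\ref{parabola} at all --- it is quoted from \cite{salamonviac} and used as an input --- so there is no internal proof to compare against; your plan can only be measured against the argument of Salamon and Viaclovsky, whose output the paper does record as Theorem~\ref{classification}. Your overall architecture (lift to $\mathbb{CP}^3$, extend the lift to an algebraic surface, classify it, read off $\pm\mathbb{J}$ and maximality from the geometry of $\pi$ restricted to that surface) is the right one, but two of the steps you flag as ``difficulties'' are, as stated, false, and the second would prove the opposite of maximality. For the extension step you propose to show that $\overline{\mathcal{S}_0}\cap\pi^{-1}(\Lambda)$ has vanishing $3$-dimensional Hausdorff measure and then invoke Shiffman. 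In the actual answer the closure is (up to conformal equivalence) the quadric $\mathcal{Q}=\{X_0X_3=X_1X_2\}$, and $\mathcal{Q}$ contains \emph{every} twistor fibre over $\Lambda$: this is precisely part (1) of Theorem~\ref{classification}, and for $\Lambda=\mathbb{R}\cup\{\infty\}$ one checks it directly, since $\pi^{-1}(\alpha)=\{[X_0,X_1,X_0\alpha,X_1\alpha]\}\subset\mathcal{Q}$ for every real $\alpha$. Hence $\overline{\mathcal{S}_0}\cap\pi^{-1}(\Lambda)$ is all of the three-dimensional set $\pi^{-1}(\Lambda)$ and has positive measure; the hypothesis you hope to verify is simply unavailable. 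What is actually controlled is the area (mass) of the lift itself near $\pi^{-1}(\Lambda)$, and the extension is by a Bishop/Federer-type argument for the current of integration, not by Shiffman.

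The same misconception corrupts your identification and maximality steps. The fibres over $\Lambda$ are not tangent to $\mathcal{Q}$; they lie entirely inside it, and $\pi|_{\mathcal{Q}}$ is not a branched double cover but a map that is $2:1$ off $\Lambda$ and collapses a whole line of $\mathcal{Q}$ to each point of $\Lambda$. If the fibres over $\Lambda$ were merely tangent, the two sheets would converge to a single common point of each fibre over $\Lambda$, the section would extend continuously across $\Lambda$, and $\mathbb{J}$ would extend --- i.e.\ your stated mechanism for maximality yields the negation of maximality. The correct mechanism is that the cluster set of the lift over each $\lambda\in\Lambda$ is the entire fibre $\pi^{-1}(\lambda)$, so no continuous single-valued extension exists at any point of $\Lambda$. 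Once this is in place, your ``main difficulty'' of excluding $\deg\mathcal{S}>2$ largely dissolves: $\overline{\mathcal{S}_0}$ contains the one-real-parameter family of twistor lines over $\Lambda$, hence infinitely many lines, hence a positive-dimensional algebraic family of lines, and an irreducible surface of degree at least $3$ in $\mathbb{CP}^3$ carrying infinitely many lines must be a cone or non-normal (a fact the paper itself exploits in Section~\ref{twlift}); these cases are then excluded by the single-valued graph condition off $\Lambda$ together with the reality constraint coming from $\sigma$. Finally, your argument for $\sigma$-invariance of $\mathcal{S}$ presupposes that $\mathcal{S}$ is a quadric and is therefore circular as written; it should instead be derived from the fact that $\sigma(\mathcal{S}_0)$ is the lift of the OCS $-\mathbb{J}$ and from irreducibility of the algebraic closure.
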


In the hypotheses of Theorem~\ref{parabola} it is possible to construct explicitly the OCS $\mathbb{J}$ as follows. 
Under the identification $\mathbb{R}^4\setminus\Lambda\simeq\mathbb{H}\setminus\mathbb{R}$, 
a point $x$ can be written as 
$x=x_0+x_1i+x_2j+x_3k$, or as $x=\alpha+I_x\beta$, where 
$\alpha=x_0$ is the real part of $x$, 
$I_x=(x_1i+x_2j+x_3k)/\sqrt{x_1^2+x_2^2+x_3^2}\in\mathbb{S}$ and 
$\beta=\sqrt{x_1^2+x_2^2+x_3^2}$, so that $I_x\beta$ represents the imaginary part of $x$.
Then, for each $x=\alpha+I_x\beta\in\mathbb{H}\setminus\mathbb{R}$ we define $\mathbb{J}$ 
such as $\mathbb{J}(x)v=I_xv$, for each $v\in T_x(\mathbb{H}\setminus\mathbb{R})$.
This is an OCS over $\mathbb{H}\setminus\mathbb{R}$ that is constant on every
complex line
\begin{equation*}
 \mathbb{C}_I:=\{\alpha+I\beta\,|\,\alpha, \beta\in\mathbb{R}\},\quad I\in\mathbb{S},
\end{equation*}
but not globally constant, hence $\pm\mathbb{J}$  
are the only non-constant OCS's on this manifold (up to conformal transformations).

In~\cite{gensalsto} the authors proposed a new way to study the problem when $\Lambda\subset\mathbb{R}^4$ is a 
closed set of different type.
The idea is to take the OCS $\mathbb{J}$, previously defined, and to push it forward on the set 
we are interested in.
To do this we need to be sure that the function $f$, considered to push forward, preserves the
properties of $\mathbb{J}$.
%
%
This property holds true if $f$ is a quaternionic slice regular function defined on
a domain $\Omega\subset\mathbb{H}$ such that $\Omega\cap\mathbb{R}\neq \emptyset$.

\begin{definition}
 Let $\Omega\subset\mathbb{H}$ be a domain such that $\Omega\cap\mathbb{R}\neq\emptyset$
 and consider a function $f:\Omega\rightarrow\mathbb{H}$. For any $I\in\mathbb{S}$ we use the following notation:
 $\Omega_I:=\Omega\cap\mathbb{C}_I$ and $f_I:=f|_{\Omega_I}$. The function $f$
 is called \textit{slice regular} if, for each $I\in\mathbb{S}$, the following equation 
 holds,
 \begin{equation*}
  \frac{1}{2}\left(\frac{\partial}{\partial \alpha}+I\frac{\partial}{\partial \beta}\right)f_I(\alpha+I\beta)=0.
 \end{equation*}

\end{definition}

Examples of slice regular functions are polynomials and power series of the form
\begin{equation*}
 \sum_{k=0}^{+\infty} q^ka_k,\quad \{a_k\}_{k\in\mathbb{N}}\subset\mathbb{H},
\end{equation*}
defined in their convergence set.

If $\Omega$ is a domain of $\mathbb{H}$ such that $\Omega\cap\mathbb{R}=\emptyset$, then
the previous definition, by itself, is not enough to obtain a satisfactory theory of
regular functions. If, for instance $\Omega=\mathbb{H}\setminus\mathbb{R}$, then
it is possible to construct the following example: consider the function
$f:\mathbb{H}\setminus\mathbb{R}\rightarrow\mathbb{H}$ defined as
 \begin{equation*}
  f(x)=\begin{cases}
        1, &\mbox{if } x\in\mathbb{H}\setminus \mathbb{C}_{i}\\
        0, &\mbox{if } x\in \mathbb{C}_{i}\setminus \mathbb{R}.
       \end{cases}
 \end{equation*}
This function is of course regular but it is not even continuous. So regularity, by itself, does not 
imply even continuity. However this example is quite artificial since we could restrict
 to functions which are already differentiable. In section~\ref{slicesec}, we will
 show a more interesting example of differentiable regular function defined on $\mathbb{H}\setminus\mathbb{R}$
 that has similar problems.


%
To overcome this issue one can choose, 
\begin{enumerate}
 \item to study regular functions defined only over 
domains that do intersect the real axis;
\item to add some hypothesis to the set of 
functions.
\end{enumerate}
%
%
Since we are interested, among the other things, in extending the theory
to regular functions defined on a more general kind of domains,
 then we will use the second approach.

More precisely we will use the concepts of \textit{slice function} and of \textit{stem
function} introduced in~\cite{ghiloniperotti} in a more general context. Using these 
instruments (that will be defined in Section~\ref{slicesec}), it is possible to extend
some rigidity and differential results, that hold for regular functions defined on domains
which intersects the real axis (see~\cite{altavilla,altavilladiff}).

After a brief summary of the twistor theory of the 4-sphere and a review of the theory of slice regular functions, 
we will extend the theoretical work of 
\cite{gensalsto} in our setting of slice regular functions on domains without real points.
In particular, our point of view will be to describe the \textit{twistor interpretation} of the theory
of regular functions. With ``twistor interpretation'' we mean the correspondence given by the fact  that any slice regular function 
$f:\Omega\rightarrow\mathbb{H}$ lifts to a (holomorphic) curve
$\hat{f}:\mathcal{O}\subset\mathbb{CP}^3\rightarrow\mathbb{CP}^3$,
in the  space $\mathbb{CP}^3$ (see Section 4).
The complex projective space $\mathbb{CP}^3$ is in fact the \textit{twistor space} of $(\mathbb{S}^4\simeq\mathbb{H}\cup\{\infty\},g_{rnd})$, that is the total space
of a bundle parameterizing orthogonal almost complex structures on $\mathbb{S}^4$ and we let $\pi:\mathbb{CP}^3\rightarrow\mathbb{S}^4$ denotes
the twistor projection with fibre $\mathbb{CP}^1$.
It is a well known fact (see, for instance, \cite{salamonviac}), that a complex hypersurface in $\mathbb{CP}^3$ produces OCS's on subdomains of $\mathbb{S}^4$
wherever such a hypersurface is a single-valued graph with respect to the twistor projection, and that any OCS $J$ on a domain $\Omega$ generates a holomorphic hypersurface in $\mathbb{CP}^3$.

With this in mind, instead of giving examples of OCS's defined on some particular
domain, we will give classes of surfaces in the twistor space of $\mathbb{S}^{4}$
that can be described by means of slice regular functions (i.e.: that can be interpreted as the image of the lift of a slice regular function). One of the main results is that not all
 surfaces fit in this construction: first of all, they have to be ruled by lines.
Thanks to this peculiarity we found interesting to explore a little bit more the so-called
\textit{twistor transform}, which, given a slice regular function, returns 
the arrangement of lines whose lift carries on.
This construction is formalized in view of the work~\cite{shapiro}, where many properties
of submanifolds in the twistor space are studied
from the arrangement of lines lying on them.

At the end we will study a very particular case that fits very well in our theory.
We now briefly describe the structure of the present paper. In section~\ref{twsec} we summarize the main results in the twistor theory
of $\mathbb{S}^4$. In section~\ref{slicesec} we present
in a concise way the theory of slice regular functions. The only original part of this section is the one regarding 
\textit{slice affine functions}. Then, in section~\ref{twlift}, we effectively extend, to our more general context, the theory of~\cite{gensalsto},
prove the main theorems and analyse classes of surfaces up to degree 3 in $\mathbb{CP}^3$ that can
be reached by the lift of a slice regular function.
In section~\ref{ratcurvesec} we show that (almost) any rational curve in the Grassmannian
$\mathbb{G}r_2(\mathbb{C}^4)$ of 2-planes in $\mathbb{C}^4$ (interpreted as 
Pl\"ucker quadric in $\mathbb{CP}^5$), can be seen as the twistor transform
of a slice regular function. Then, we exhibit the set of slice regular functions whose
twistor transform describes a rational line inside $\mathbb{G}r_2(\mathbb{C}^4)$.
This result shows, in particular, the role of slice regular functions not defined on $\mathbb{R}$.
Indeed in the last remark of the section it is pointed out that, this set,
does not contain any slice regular function defined over the reals.

The subsequent section contains an explicit example of application.
This example provides some techniques that will be probably exploited in the future, in 
the study of much more  significant and technically complicated examples.
In some sense these explicit computations are natural because they regard 
the study of a particular degree-one surface (that is a hyperplane), in $\mathbb{CP}^3$,
hence a \textit{basic case} of study.

At the end there is a final small section on the possible future developments of the present work.


%


%


\section{Twistor space of $\mathbb{S}^4$}\label{twsec}
In this section we will review some aspect of twistor geometry focusing on the special 
case of $\mathbb{S}^{4}$. This part of the paper does not contain any new result but is
intended to be a summary of the main concepts and constructions that justify our
study. The themes that we are going to describe are classical but, according to our notation
and language, we refer to the following more recent papers~\cite{armpovsal,armsal,salamonviac,shapiro}.

The twistor space $Z$ of an oriented Riemannian manifold $(M,g)$ is the total space of a  bundle containing almost complex structures
(ACS) defined on $M$ and compatible with the metric $g$ and the orientation. The definition of the twistor
space does not depend on the full metric $g$ but only on its conformal class $[g]$. In fact, if 
$J$ is an ACS on $M$ compatible with $g$ and $g'=e^{f}g\in[g]$, then $J$ is obviously compatible
with respect to $g'$ as well.

A motivation to study twistor spaces is that, if $M$ is half-conformally-flat\footnote{Recall that a Riemannian metric
$g$ on $M$ is called \textit{half-conformally-flat}, or \textit{anti-self-dual}, if the self-dual part 
$W_{+}$ of the Weyl tensor vanishes. Vanishing of both self-dual and anti-self-dual part of the Weyl
tensor (i.e.: vanishing of the entire Weyl tensor), is equivalent to local conformal flatness of the metric $g$.
}, then its 
conformal geometry is encoded into the complex geometry of $Z$: for instance an ACS on $M$ compatible 
with the metric is integrable if and only if the corresponding section
of $Z$ defines a holomorphic submanifold.

Of course we will focus in the case in which $M$ is the 4-sphere $\mathbb{S}^{4}\simeq\mathbb{HP}^{1}$,
which topologically is $\mathbb{R}^{4}\cup\{\infty\}$. 
Here the quaternionic projective line $\mathbb{HP}^{1}$ is defined to be the set of equivalence classes
$[q_{1},q_{2}]$, where $[q_{1},q_{2}]=[pq_{1},pq_{2}]$ for any $p\in\mathbb{H}\setminus\{0\}$.
As we will see, the choice of left multiplication is forced by the choice of studying left slice functions.
Moreover, we embed the quaternionic space $\mathbb{H}$ into $\mathbb{HP}^{1}$ as $q\mapsto [1,q]$.
So, the point at infinity is represented by $[0,1]$. 
In this case, the twistor space is $\mathbb{CP}^{3}$ 
and the associated bundle structure $\pi:\mathbb{CP}^{3}\rightarrow \mathbb{HP}^{1}$ is the fibration,
defined as:
\begin{equation*}
\pi[X_{0},X_{1},X_{2},X_{3}]=[X_{0}+X_{1}j,X_{2}+X_{3}j].
\end{equation*}
It is known (see, e.g., \cite[Section 2.6]{salamonviac}), that any complex hypersurface  in $\mathbb{CP}^{3}$
transverse to the fibres of $\pi$
produces an OCS on subdomains of $\mathbb{R}^{4}$ whenever such a hypersurface is a single valued 
graph (with respect to the twistor projection). Vice versa, any OCS 
on a domain $\Omega\subset\mathbb{S}^{4}$ corresponds to a holomorphic hypersurface.
Moreover, for topological reason it is not possible to define any
ACS on the whole $\mathbb{S}^{4}$ (see~\cite[Proposition 6.6]{wood}), so, no hypersurface in 
$\mathbb{CP}^{3}$ can intersect every fibre of the twistor fibration in exactly one point.
\begin{remark}\label{matrix}
With our identifications, under the projection $\pi$, the matrix $J$ of the ACS corresponding to the point 
$[1,u=x+iy, X_{2},X_{3}]\in\mathbb{CP}^{3}$ is given by 
(up to notation and chirality, see~\cite[Section 2]{salamonviac}):
\begin{equation*}
J=\frac{-1}{1+|u|^{2}}\left(\begin{array}{cccc}
0 & 1-|u|^{2} & 2y  & -2x \\
-1+|u|^{2}&0  &-2x  & -2y \\
-2y&  2x&0  &1-|u|^{2}  \\
2x&  2y&  1-|u|^{2}&  0
\end{array}
\right).
\end{equation*}
\end{remark}

From the previous simple considerations it becomes natural to investigate the algebraic geometry of surfaces in 
$\mathbb{CP}^{3}$ from this perspective. For instance, a natural question that arises is to classify
surfaces of degree $d$ in complex projective space up to conformal transformations of the
base space $\mathbb{S}^{4}$. A starting point, in this framework, is to find \textit{conformal invariants},
after having clarified what we mean by \textit{conformal transformation} in the twistor
space $\mathbb{CP}^3$ of $\mathbb{S}^4$.

On any twistor fibre one can define a map $j$ which sends an ACS $J$ to $-J$. In our case $j$ is exactly
the action of multiplying a 1-dimensional complex subspace of $\mathbb{C}^{4}$ by the quaternion $j$
in order to get a new 1-dimensional space, i.e.: $j$ is the map on $\mathbb{CP}^{3}$ induced by the
quaternionic multiplication by $j$ in $\mathbb{HP}^{1}$:
\begin{equation*}
j:[X_{0},X_{1},X_{2},X_{3}]\mapsto[-\bar X_{1},\bar X_{0},-\bar X_{3},\bar X_{2}].
\end{equation*}
The map $j$ is an antiholomorphic involution of the twistor space with no fixed points. 
Starting with such a map $j$, one can recover the twistor fibration: given a point $X\in\mathbb{CP}^{3}$
there is a unique projective line connecting $X$ and $j(X)$. When $X$ varies, all these lines form the fibres. So, if a line $l$ in $\mathbb{CP}^{3}$ is a fibre for $\pi$, then $l=j(l)$.

The conformal symmetries of $\mathbb{S}^{4}$ correspond to the group of invertible transformations
\begin{equation*}
[q_{1},q_{2}]\mapsto[q_{1}d+q_{2}c,q_{1}b+q_{2}a],\quad a,b,c,d\in\mathbb{H},
\end{equation*}
where the invertibility condition is given by the following equation (see, e.g., \cite[Section 9.2]{genstostru}),
\begin{equation*}
|a|^{2}|d|^{2}+|b|^{2}|c|^{2}-2Re(b^{c} dc^{c}a)\neq 0.
\end{equation*}
Restricting to the affine line $q\in\mathbb{H}\mapsto[1,q]$, the latter becomes the linear fractional
transformation given by $q\mapsto (qc+d)^{-1}(qa+b)$.
These transformations correspond to the projective transformations of $\mathbb{CP}^{3}$ that preserve
$j$. We, therefore, say that two complex submanifolds of $\mathbb{CP}^{3}$ are \textit{conformally
equivalent} if they are projectively equivalent by a transformation that preserves $j$.

\begin{definition}
Let $\Sigma$ be an algebraic hypersurface of degree $d$ in $\mathbb{CP}^{3}$.
A \textit{twistor fibre} (or \textit{twistor line}) of $\Sigma$ is a fibre of $\pi$ which lies entirely 
within the surface $\Sigma$.

Moreover, we define the \textit{discriminant locus} of $\Sigma$ to be the set $D$ of points 
$p\in D\subset\mathbb{S}^{4}$, such that $\pi^{-1}(p)\cap\Sigma$ has cardinality different from $d$.
\end{definition}

The fibres of the twistor fibration are complex projective lines in $\mathbb{CP}^{3}$: if, in fact, we fix a 
quaternion $q=q_{1}+q_{2}j$, then the fibre $\mathbb{CP}^{1}=\pi^{-1}([1,q])$, is given by
\begin{equation*}
[1,q]=\pi[X_{0},X_{1},X_{2},X_{3}]=[X_{0}+X_{1}j,X_{2}+X_{3}j]=[1,(X_{0}+X_{1}j)^{-1}(X_{2}+X_{3}j)],
\end{equation*}
which translates into,
\begin{equation*}
(X_{0}+X_{1}j)(q_{1}+q_{2}j)=X_{2}+X_{3}j\,\Leftrightarrow\,
\begin{cases}
X_{2}=X_{0}q_{1}-X_{1}\bar q_{2}\\
X_{3}=X_{0}q_{2}+X_{1}\bar q_{1}
\end{cases}
\end{equation*}
The number
of twistor fibres of an algebraic surface $\Sigma$ is an invariant under conformal transformations.
Of course, if the degree of $\Sigma$ is $d$, then a generic fibre, intersecting $\Sigma$ transversely, 
will contain $d$ points because the defining polynomial of the surface, when restricted to the fibre,
gives a polynomial of degree $d$.

\begin{example}
The inversion $q\mapsto q^{-1}$ lifts to the automorphism of $\mathbb{CP}^{3}$ defined by:
\begin{equation*}
[X_{0},X_{1},X_{2},X_{3}]\mapsto[X_{2},X_{3},X_{0},X_{1}].
\end{equation*}
\end{example}

We will now review some classes of algebraic manifolds in $\mathbb{CP}^{3}$ already studied from this
point of view.

\subsection{Lines}
Consider two lines in $\mathbb{CP}^{3}$. If both lines are fibres of $\pi$ then they are conformal
equivalent by an isometry of $\mathbb{S}^{4}$ sending the image of one line under $\pi$ to the image
of the other line. If a line is not a fibre of $\pi$ then its image will be a round  2-sphere in $\mathbb{S}^{4}$
(corresponding to a 2-sphere or a 2-plane in $\mathbb{R}^{4}$). Given such a 2-sphere in $\mathbb{S}^{4}$,
there are two projective lines lying above it in $\mathbb{CP}^{3}$: in this case if $l$ is a line
projecting on the 2-sphere, then the other line is $j(l)$ which, in this case, is
disjoint from $l$ (see~\cite[Proposition 2.8]{shapiro}).
Therefore, a line in $\mathbb{CP}^{3}$ is given by either an oriented 2-sphere  or a point in $\mathbb{S}^4$.
Moreover, any two such 2-spheres are conformal equivalents (this geometric correspondence is explained
in detail in~\cite{shapiro}).

\subsection{Planes}
A plane in $\mathbb{CP}^{3}$ is given by a single linear equation of the form 
\begin{equation*}
c_{0}X_{0}+c_{1}X_{1}+c_{2}X_{2}+c_{3}X_{3}=0,
\end{equation*}
where, for each $i=1\dots4$, $c_{i}$ are constant numbers. A plane in $\mathbb{CP}^{3}$
cannot be transverse to every fibre of $\pi$ because it would then define a complex structure
on the whole $\mathbb{S}^{4}$ and (as already said), this is not possible.
Therefore, a plane, always contains at least one twistor fibre. Twistor fibres are always skew (otherwise they
would project to the same point), while
two lines in a plane always meet. Hence a plane always contains exactly one twistor fibre. If one picks
another line in the plane transverse to the fibre, its image under $\pi$ will be a 2-sphere.
We can find a conformal transformation of $\mathbb{S}^{4}$ mapping any 2-sphere with a marked point
to any other 2-sphere with a marked point (see again~\cite{shapiro}). We deduce that any couple of planes in $\mathbb{CP}^{3}$
are conformally equivalents (and not only projectively).
\subsection{Quadrics}
Non-singular quadrics in $\mathbb{CP}^{3}$  can be classified under conformal transformations of the
4-sphere $\mathbb{S}^{4}$.

\begin{theorem}[\cite{salamonviac}]
 Any non-singular quadric hypersurface in $\mathbb{CP}^3$ is equivalent under the action of the conformal group
 of $\mathbb{S}^4$ to the zero set of
 \begin{equation}\label{quaddiag}
  e^{\lambda+i\nu}X_0^2+e^{-\lambda+i\nu}X_1^2+e^{\mu-i\nu}X_2^2+e^{-\mu-i\nu}X_3^2,
 \end{equation}
or the zero set of
\begin{equation}\label{quadnondiag}
 i(X_0^2+X_1^2)+k(X_1X_3-X_0X_2)+X_1X_2-X_0X_3,
\end{equation}
where in the first case a couple of parameters $(\lambda,\mu,\nu)$, $(\lambda',\mu',\nu')$
define two quadrics in the same equivalence class if and only if $(\lambda,\mu,\nu)$ and $(\lambda',\mu',\nu')$ belong 
to the same orbit under the group $\Gamma$ of transformation of $\mathbb{R}^3$ generated by the four maps
\begin{equation*}
 \begin{cases}
  (\lambda,\mu,\nu)\mapsto(\lambda,\mu,\nu+\frac{\pi}{2})\\
  (\lambda,\mu,\nu)\mapsto(-\lambda,\mu,\nu)\\
  (\lambda,\mu,\nu)\mapsto(\lambda,-\mu,\nu)\\
  (\lambda,\mu,\nu)\mapsto(\mu,\lambda,-\nu),
 \end{cases}
\end{equation*}
while $k\in[0,1)$ is a complete invariant in the second case.
\end{theorem}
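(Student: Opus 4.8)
The plan is to reduce the classification to a linear-algebra problem about a single $\mathbb{C}$-linear operator canonically attached to the quadric, and then to perform a real (quaternionic) refinement of Jordan theory. Write the quadric as the zero locus of a symmetric form $Q(X)=X^{T}AX$ with $A=A^{T}\in GL(4,\mathbb{C})$, so that non-singularity is exactly $\det A\neq 0$. Linearize the involution $j$ as $\sigma(X)=J\bar X$, where $J$ is the real matrix with $j([X])=[J\bar X]$. By the discussion preceding the statement, the conformal group of $\mathbb{S}^{4}$ acts as the projectivization of $G=\{g\in GL(4,\mathbb{C}):gJ=cJ\bar g \text{ for some } c\}\cong GL(2,\mathbb{H})$, and it acts on quadrics by congruence $A\mapsto g^{T}Ag$ (up to a nonzero scalar, which is immaterial since quadrics are taken projectively). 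First I would introduce the \emph{$\sigma$-conjugate form} $A^{\sigma}:=J^{T}\bar A J$ and the operator
\begin{equation}
N:=A^{-1}A^{\sigma}.
\end{equation}
A direct computation using $gJ=cJ\bar g$ shows that $A^{\sigma}$ transforms by the same congruence (up to the same scalar) as $A$, whence $N\mapsto g^{-1}Ng$. Thus the $G$-conjugacy class of $N$ is a \emph{conformal invariant} of the quadric, and this is the object I would classify.

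The second step is to extract the constraints that $N$ inherits from the symmetric and quaternionic structures. One checks the identities $(A^{\sigma})^{\sigma}=A$ and, as operators, $\sigma N\sigma^{-1}=J\bar N J^{-1}=N^{-1}$. Since $\sigma$ is antilinear, this relation forces the spectrum of $N$ to be invariant under the anti-holomorphic inversion $z\mapsto\bar z^{-1}$: eigenvalues occur either on the fixed circle $|z|=1$ or in genuine pairs $\{z,\bar z^{-1}\}$ with $\sigma$-conjugate eigenspaces. Moreover $N$ is invertible precisely because $\det A\neq 0$, so $0$ is never an eigenvalue. I would then diagonalize $A$ in the generic case and verify by the explicit model that the four eigenvalues of $N$ are $e^{\mp 2\lambda-2i\nu}$ and $e^{\pm 2\mu+2i\nu}$; this matches the diagonal family \eqref{quaddiag} and shows that the three continuous moduli are encoded faithfully by the eigenvalues of $N$, subject to the inversion–conjugation symmetry.

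The third step is the orbit analysis, split according to the Jordan type of $N$. In the \emph{semisimple} case (four distinct eigenvalues off the fixed circle, paired as $\{z_{1},\bar z_{1}^{-1}\}$, $\{z_{2},\bar z_{2}^{-1}\}$) the operator is diagonalizable by a $j$-preserving change of basis and one recovers \eqref{quaddiag}; the residual freedom is exactly the reordering of the two pairs, the swap inside each pair, and the shift coming from the half-integer ambiguity in $\nu$, all realized by permutation/sign matrices lying in $G$. Identifying these with the four generators gives the finite group $\Gamma$, and I would confirm that two triples $(\lambda,\mu,\nu)$ are conformally equivalent iff they lie in the same $\Gamma$-orbit by computing the stabilizer of the corresponding diagonal $N$ inside $G$ (a normalizer-of-torus, i.e.\ Weyl-type, computation). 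In the \emph{non-semisimple} case the spectral symmetry forces the collision to occur on $|z|=1$ and $N$ to acquire a single $2\times 2$ Jordan block paired with its $\sigma$-partner; reconstructing the symmetric form compatible with this $N$ and the involution yields the non-diagonal normal form \eqref{quadnondiag}, and the one surviving real modulus, suitably normalized, is $k\in[0,1)$.

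The hard part will be the real/quaternionic refinement of complex Jordan theory in the third step: showing that these are the \emph{only} orbits, i.e.\ that $3\times 3$ and $4\times 4$ Jordan blocks (and other spectral degenerations) either force $\det A=0$ or are incompatible with the pair (symmetric form, anti-involution $\sigma$), and then computing the residual stabilizers precisely enough to obtain the exact generators of $\Gamma$ and the sharp range $k\in[0,1)$ (with $k=1$ excluded as the singular/degenerate limit). This is essentially a $*$-congruence classification of a symmetric form under an anti-involution, where Segre symbols and the reality signs imposed by $\sigma$ interact, and keeping track of which complex conjugations are realized by genuine elements of $G\cong GL(2,\mathbb{H})$ is the delicate bookkeeping. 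The construction of the invariant operator $N$ and its covariance, by contrast, are clean and essentially formal.
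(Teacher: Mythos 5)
This theorem is quoted in the paper verbatim from \cite{salamonviac} and no proof of it appears here, so your attempt can only be measured against the argument in that reference. Your strategy is essentially the one used there: attach to the quadric $\{X^{T}AX=0\}$ the endomorphism obtained by composing its polarity with the antiholomorphic involution $j$ twice, namely $N=A^{-1}J^{T}\bar{A}J$ with $\sigma(X)=J\bar{X}$, observe that its conjugacy class is a conformal invariant, and read the normal forms \eqref{quaddiag} and \eqref{quadnondiag} off the Jordan theory of $N$ constrained by $J\bar{N}J^{-1}=N^{-1}$. Your spectral computation checks out: for the diagonal family one finds $N=\mathrm{diag}(e^{-2\lambda-2i\nu},e^{2\lambda-2i\nu},e^{-2\mu+2i\nu},e^{2\mu+2i\nu})$, with spectrum invariant under $z\mapsto\bar{z}^{-1}$ exactly as you claim.

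Two genuine issues remain. First, $N$ is not an honest conjugation covariant: under $A\mapsto g^{T}Ag$ with $gJ=cJ\bar{g}$ one gets $N\mapsto c^{-2}g^{-1}Ng$, and rescaling $A\mapsto tA$ (which does not move the quadric) gives $N\mapsto(\bar{t}/t)N$; so the true invariant is the conjugacy class of $N$ only up to a unimodular scalar. This is not pedantry: after normalizing $\det A$ the residual scalar acting on $N$ is $\pm 1$, and this sign is precisely the generator $(\lambda,\mu,\nu)\mapsto(\lambda,\mu,\nu+\tfrac{\pi}{2})$ of $\Gamma$, so if you suppress the scalar you cannot recover $\Gamma$ correctly. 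Second, everything you defer as ``the hard part'' is where the theorem actually lives. In particular, your assertion that non-semisimplicity forces the repeated eigenvalue onto $|z|=1$ is stated rather than proved: a priori the symmetry $\mathrm{spec}(N)=\overline{\mathrm{spec}(N)}^{-1}$ also permits two $2\times 2$ Jordan blocks at an off-circle pair $\{z,\bar{z}^{-1}\}$, and one must show that this configuration is incompatible with a nondegenerate symmetric $A$ and the anti-involution (or that it collapses into the family \eqref{quaddiag}), exclude $3\times 3$ and $4\times 4$ blocks, and pin down both the exact generators of $\Gamma$ via the normalizer computation and the sharp range $k\in[0,1)$. As a plan the proposal is sound and faithful to the route of \cite{salamonviac}; as a proof it stops exactly at the case analysis that constitutes the content of the classification.
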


With this result the authors of~\cite{salamonviac} were able to describe the geometry of non-singular
quadric surfaces under the twistor projection $\pi$. 
\begin{theorem}[\cite{salamonviac}]\label{classification}
For any non-degenerate quadric surface $\mathcal{Q}\subset\mathbb{CP}^{3}$ there are three possibilities.
\begin{enumerate}
 \item $\mathcal{Q}$ is a real quadric with discriminant locus a circle in $\mathbb{S}^4$ and $\mathcal{Q}$ contains all
 the twistor lines over the circle. 
  \item $\mathcal{Q}$ contains exactly one or exactly two twistor lines. In these cases the discriminant locus is a singular torus pinched at
 one or two points, respectively.
 \item $\mathcal{Q}$ does not contain any twistor lines. In this case the discriminant locus is a torus $\mathbb{T}^2\subset\mathbb{S}^4$
 with a smooth unknotted embedding.
 \end{enumerate}
Moreover if $\mathcal{Q}$ is the zero locus of the polynomial in~\eqref{quaddiag} with $0\leq\lambda\leq\mu$ and $0\leq\nu<\pi/2$, then
\begin{enumerate}
 \item $\mathcal{Q}$ contains a family of twistor lines over a circle if and only if $\lambda=\mu=\nu=0$,
 \item $\mathcal{Q}$ contains exactly two twistor lines if and only if $\lambda=\mu\neq 0$ and $\nu=\pi/2$,
 \item $\mathcal{Q}$ contains no twistor lines in the other cases.
\end{enumerate}
Finally if $\mathcal{Q}$ is the zero locus of the polynomial in~\eqref{quadnondiag} with $k\in[0,1)$, then the corresponding
quadric $\mathcal{Q}$ contains exactly one twistor line.
\end{theorem}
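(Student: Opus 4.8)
The plan is to exploit conformal invariance to reduce every non-singular quadric to one of the two normal forms \eqref{quaddiag} and \eqref{quadnondiag} furnished by the preceding classification. Recall that a conformal equivalence is by definition a projective transformation preserving $j$, hence it carries twistor fibres to twistor fibres and the discriminant locus to the discriminant locus, preserving its topology; all the invariants appearing in the statement are therefore conformal invariants. It thus suffices to analyse the zero loci of \eqref{quaddiag} and \eqref{quadnondiag} directly, keeping the parameters in the normalized range $0\le\lambda\le\mu$, $0\le\nu<\pi/2$, $k\in[0,1)$.

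I would first record the local picture over a single point. Fix $p=[1,q]\in\mathbb{S}^4$ with $q=q_1+q_2j$, $q_1,q_2\in\mathbb{C}$, and substitute the parameterization of its twistor fibre,
\begin{equation*}
X_2=X_0q_1-X_1\bar q_2,\qquad X_3=X_0q_2+X_1\bar q_1,
\end{equation*}
into the defining polynomial of $\mathcal{Q}$. Since $\deg\mathcal{Q}=2$, this produces a binary quadratic form $Q_p(X_0,X_1)=a(q)X_0^2+b(q)X_0X_1+c(q)X_1^2$ whose coefficients are explicit real-analytic functions of $(q_1,q_2,\bar q_1,\bar q_2)$ taking \emph{complex} values. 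The fibre meets $\mathcal{Q}$ in two points, in one point, or is entirely contained in $\mathcal{Q}$ according to whether the discriminant $\Delta(q)=b(q)^2-4a(q)c(q)$ is nonzero, vanishes with $(a,b,c)\ne0$, or all three coefficients vanish. Consequently $p$ belongs to the discriminant locus exactly when $\Delta(q)=0$, and $p$ carries a twistor line exactly when $a(q)=b(q)=c(q)=0$. Because $\Delta$ is complex-valued, its zero set is generically of real codimension two in $\mathbb{S}^4$, i.e. a surface, which is already consistent with the torus appearing in cases (2) and (3).

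Next I would count the twistor lines from the system $a=b=c\equiv0$. A twistor line is a $j$-invariant line contained in $\mathcal{Q}\cong\mathbb{CP}^1\times\mathbb{CP}^1$, hence a fibre belonging to one of the two rulings; since $j$ is fixed-point-free it exchanges the two lines lying over any non-fibre $2$-sphere, which keeps the number of genuinely $j$-invariant lines finite away from the real locus. For the diagonal form \eqref{quaddiag} the equations $a=b=c=0$ become a small polynomial system in $(q_1,q_2,\bar q_1,\bar q_2)$; solving it should give a whole $\mathbb{CP}^1$-family of solutions (projecting to a circle) precisely when $\lambda=\mu=\nu=0$, exactly two isolated solutions when $\lambda=\mu\ne0$ and $\nu=\pi/2$, and none in the remaining cases. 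For the non-diagonal form \eqref{quadnondiag} the same computation should leave exactly one solution for every $k\in[0,1)$, matching the final assertion.

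Finally I would determine the topology of $D=\{\Delta=0\}$ and link it to the twistor-line count. In the non-real cases $\Delta$ is a nondegenerate complex-valued real-analytic function whose zero set is a smoothly embedded torus $\mathbb{T}^2$, with the two rulings of $\mathcal{Q}$ each contributing an $\mathbb{S}^1$ of tangent fibres. The decisive mechanism is the behaviour of $D$ at a twistor line: there the two intersection points of the nearby generic fibre collide and simultaneously the whole fibre degenerates into $\mathcal{Q}$, so the torus is pinched to a point; the number of pinch points therefore equals the number of twistor lines, yielding the singular torus with one or two pinches in case (2) and the smooth unknotted torus in case (3). When $\lambda=\mu=\nu=0$ the family of twistor lines fills a $\mathbb{CP}^1$ and $D$ collapses onto the circle they project to, giving case (1). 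The hardest step is exactly this last one: proving that $\{\Delta=0\}$ is globally a (possibly pinched) torus and that its embedding is unknotted, and controlling precisely how it degenerates at the twistor lines. This demands a careful real-analytic study of $\Delta(q)$ in each normal form — rather than a mere pointwise description of $D$ — and is where the bulk of the genuine work resides.
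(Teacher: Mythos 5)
First, a point of calibration: the paper does not prove Theorem~\ref{classification} at all --- it is quoted from \cite{salamonviac} and used as an imported classification result --- so there is no in-paper argument to compare yours against. Judged on its own terms, your skeleton is the right one and is essentially the strategy of the cited source: substitute the linear parameterization of the fibre over $q$ into the defining polynomial to obtain a binary quadratic form $a(q)X_0^2+b(q)X_0X_1+c(q)X_1^2$ with complex-valued coefficients, identify the discriminant locus with $\{\Delta=0\}$ and the twistor lines with $\{a=b=c=0\}$, and use conformal invariance to reduce to the normal forms \eqref{quaddiag} and \eqref{quadnondiag}.

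As a proof, however, the proposal has two genuine gaps, both of which you partly concede. First, the enumeration of twistor lines is never carried out: for each normal form you assert that the system $a=b=c=0$ ``should give'' a circle of solutions when $\lambda=\mu=\nu=0$, exactly two when $\lambda=\mu\neq0$ and $\nu=\pi/2$, none otherwise, and exactly one for \eqref{quadnondiag}; but these case distinctions \emph{are} the content of the ``moreover'' and ``finally'' clauses, and nothing in the write-up establishes them. (A careful treatment would also have to resolve the fact that the condition $\nu=\pi/2$ in case (2) falls outside the normalized range $0\le\nu<\pi/2$ fixed in the hypothesis --- an inconsistency already present in the statement as transcribed, which you reproduce without comment.) Second, and more seriously, the identification of $\{\Delta=0\}$ as a smoothly embedded unknotted torus, respectively a torus pinched at exactly as many points as there are twistor lines, is asserted rather than derived: the zero set of a generic complex-valued real-analytic function on a $4$-manifold is a surface, but nothing in your argument forces it to be a torus, let alone unknotted; this requires an explicit analysis of $\Delta$ for each normal form (in \cite{salamonviac} it exploits the torus symmetry of the diagonal quadric to exhibit the discriminant locus as a family of circles). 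Finally, the remark that $j$ being fixed-point-free keeps the number of twistor lines finite is not an argument; the relevant observation is that every twistor line of $\mathcal{Q}$ lies in $\mathcal{Q}\cap j(\mathcal{Q})$, which is a quartic curve when $\mathcal{Q}\neq j(\mathcal{Q})$ and gives no finiteness at all when $\mathcal{Q}=j(\mathcal{Q})$ (case (1) indeed has infinitely many). In short: correct plan of attack, but the decisive computations and the topological argument --- which together constitute the theorem --- are missing.
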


Singular quadric surfaces are still not studied.

\subsection{Cubics}
In~\cite{armpovsal} it is proven that a non-singular cubic contains at most 5 twistor lines. Moreover
for a generic set of 5 points lying on a 2-sphere in $\mathbb{S}^{4}$ there exists a one parameter family
of projectively isomorphic but conformal non-isomorphic non-singular cubic surfaces with 5 twistor lines 
corresponding to the 5 points.
The following result was proven in~\cite{armpovsal} where the authors begin the study of this topic for
non-singular cubic surfaces.
\begin{theorem}[\cite{armpovsal}]
Given 5 points on a 2-sphere in $\mathbb{S}^{4}$, there is a non-singular cubic surface with 5 twistor lines 
corresponding to these points if and only if no 4 of the points lie on a circle.
\end{theorem}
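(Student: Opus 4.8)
The plan is to recast the statement in the classical geometry of the twenty-seven lines on a cubic surface, exploiting that the twistor fibres are exactly the $j$-invariant lines of $\mathbb{CP}^3$, and then to match the conformal condition of cocircularity with the algebraic condition for the cubic to be singular.

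First I would extract the incidence data forced by the hypotheses. As recalled in the subsection on lines, a round $2$-sphere $S\subset\mathbb{S}^4$ is covered by exactly two skew, conjugate lines $m$ and $j(m)$ of $\mathbb{CP}^3$, each carried diffeomorphically onto $S$ by $\pi$. If $p_1,\dots,p_5\in S$ and $\ell_i=\pi^{-1}(p_i)$ are the corresponding twistor fibres, then $\ell_i$ meets $m$ in the unique point of $m$ over $p_i$ and meets $j(m)$ in its conjugate, while the $\ell_i$ are mutually skew (being distinct fibres). Consequently any cubic $\Sigma$ through $\ell_1,\dots,\ell_5$ must also contain $m$ and $j(m)$: each is transverse to five skew lines lying on $\Sigma$, hence meets $\Sigma$ in at least five points and so is contained in it. The task therefore reduces to realising the seven lines $\{m,j(m),\ell_1,\dots,\ell_5\}$, with the incidences above and with $j(\ell_i)=\ell_i$, among the lines of a non-singular cubic.

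Second, I would fit this pattern into a double-six. Writing the two sixers $a_1,\dots,a_6$, $b_1,\dots,b_6$ and the fifteen residual lines $c_{ij}$, the five lines $b_1,b_2,b_3,b_4,c_{56}$ are mutually skew and each meets both $a_5$ and $a_6$, which are themselves skew; so I would take $m=a_5$, $j(m)=a_6$ and $\{\ell_i\}=\{b_1,b_2,b_3,b_4,c_{56}\}$. The index transposition $(5\,6)$ induces an incidence-preserving involution of the configuration that swaps $a_5\leftrightarrow a_6$ and fixes each $\ell_i$, and I would promote it to an antiholomorphic real structure realising $j$, checking that its fixed lines are exactly these five and that $j$ acts on each without fixed points, so that the $\ell_i$ are genuine twistor fibres. (This also explains why five is optimal, matching the bound stated above.) Concretely, such a cubic is a blow-up of $\mathbb{CP}^2$ at six points $q_1,\dots,q_6$; the prescribed positions of the $p_i$ along $m\cong\mathbb{CP}^1$ fix the relevant cross-ratios of the $q_i$, and I would solve for the $q_i$ in explicit coordinates.

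The final and hardest step is the non-singularity analysis. A cubic obtained this way degenerates precisely when the six base points fall into special position, and I must translate this into a condition on $p_1,\dots,p_5$. The bridge is that $\pi|_m$ is a conformal identification of $m\cong\mathbb{CP}^1$ with $S$, so four of the $p_i$ lie on a circle of $S$ if and only if the four points $\ell_i\cap m\in\mathbb{CP}^1$ have real cross-ratio. I would compute the discriminant of the constructed cubic as a function of these cross-ratios and show that it vanishes exactly when some four of the $\ell_i\cap m$ are cocircular, i.e.\ when some four of the $p_i$ lie on a circle; the same computation delivers the ``only if'' direction, a real cross-ratio forcing a node. I expect the real work to lie here: carrying the real structure faithfully through the construction, so that the output is a bona fide $j$-invariant cubic with five honest twistor fibres, and verifying that off the cocircularity locus the seven lines do extend to a smooth cubic rather than to a singular one.
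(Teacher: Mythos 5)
A preliminary remark: the paper does not prove this statement — it is quoted from the reference on twistor lines on cubic surfaces and used purely as background in Section 2 — so there is no in-paper proof to measure your argument against. On its own terms, your reduction is sound and follows the strategy of the cited source: distinct twistor fibres are mutually skew, a round $2$-sphere carries exactly the two conjugate transversals $m$ and $j(m)$, a line meeting a cubic surface in more than three points lies on it, and the incidence type of $\{m,j(m),\ell_1,\dots,\ell_5\}$ among the $27$ lines is indeed that of $\{a_5,a_6,b_1,b_2,b_3,b_4,c_{56}\}$ (the common transversals of $b_1,\dots,b_4,c_{56}$ are exactly $a_5,a_6$, as you need).

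That said, what you have written is an architecture, not a proof, and the steps you defer are exactly where the theorem lives. First, you never establish that a cubic through the seven-line configuration exists, nor how many there are; without a uniqueness or finiteness statement the ``only if'' direction does not follow from computing the discriminant of the cubics you happen to construct, since a priori a different cubic through the same five fibres could be smooth (knowing every such cubic contains $m$ and $j(m)$ narrows the family but does not close this gap). Second, ``promoting'' the index transposition $(5\,6)$ to the twistor involution is not a formality: $j$ is one specific fixed-point-free antiholomorphic involution of $\mathbb{CP}^3$, and you must exhibit a $j$-invariant cubic on which $j$ induces precisely that permutation of lines, with the five fixed lines genuinely being fibres of $\pi$ over the prescribed points $p_1,\dots,p_5$; an abstract real structure with the right combinatorics does not place the surface in twistor position. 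Third, the equivalence between ``four of the $p_i$ concyclic'' and singularity of the cubic is the heart of the matter and is entirely deferred to an unperformed computation; your translation of concyclicity into real cross-ratio on $m\cong\mathbb{CP}^1$ is correct, but nothing in the outline indicates why a real cross-ratio forces a node rather than merely a special smooth surface. Until these three points are carried out, the proposal remains a plausible plan rather than a proof.
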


\subsection{Quartics}
The only known result for quartic surfaces in this direction (in our knowledge), regards a singular quartic scroll studied in~\cite[Section 7]{gensalsto}. This quartic scroll $\mathcal{K}$ is defined by the following
equation
\begin{equation*}
(X_{1}X_{2}-X_{0}X_{3})^{2}+2X_{1}X_{0}(X_{1}X_{2}+X_{0}X_{3})=0.
\end{equation*}
Define now $\gamma$ to be the parabola
\begin{equation*}
\gamma:=\{t^{2}+ti\,|\,t\in\mathbb{R}\}\subset\mathbb{C}_{i},
\end{equation*}
and $\Gamma$ the following paraboloid of revolution:
\begin{equation*}
\Gamma:=\{q_{0}+jq_{2}+kq_{3}\,|\,q_{0},q_{2},q_{3}\in\mathbb{R},\,q_{0}=\frac{1}{4}-(q_{2}^{2}+q_{3}^{2})\}.
\end{equation*}
The following theorem is the mentioned result.
\begin{theorem}[\cite{gensalsto}]
Let $q\in\mathbb{H}$. The cardinality of the fibre $\pi^{-1}(q)\cap\mathcal{K}$ is different
from 4 in the following cases:
\begin{enumerate}
\item $q\in\gamma$ if and only if $\pi^{-1}(q)\subset\mathcal{K}$;
\item $q\in\mathbb{C}_{i}\setminus\gamma$ if and only if $\pi^{-1}(q)$ contains exactly two
singular points of $\mathcal{K}$;
\item $q\in\Gamma\setminus \{\frac{1}{4}\}$ if and only if $\pi^{-1}(q)$is tangent to $\mathcal{K}$ at two
smooth points.
\end{enumerate}
\end{theorem}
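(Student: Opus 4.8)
The plan is to restrict the defining polynomial $F=(X_{1}X_{2}-X_{0}X_{3})^{2}+2X_{1}X_{0}(X_{1}X_{2}+X_{0}X_{3})$ to a twistor fibre and to read off the intersection from the resulting binary quartic. Writing $q=z+wj$ with $z,w\in\mathbb{C}$, the fibre equations recalled above present $\pi^{-1}([1,q])$ as the rational curve $[a:b]\mapsto[a:b:az-b\bar w:aw+b\bar z]$. Substituting into $F$ yields a binary quartic $P(a,b)$ whose coefficients I would compute explicitly; its leading and trailing terms are $w^{2}a^{4}$ and $\bar w^{2}b^{4}$, and the middle coefficient is $(z-\bar z)^{2}+2|w|^{2}+2(z+\bar z)$. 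Since the cardinality of $\pi^{-1}(q)\cap\mathcal{K}$ equals the number of distinct roots of $P$, it drops below $4$ exactly when $P$ vanishes identically or acquires a multiple root, and the whole argument then splits according to whether $w=0$ (that is $q\in\mathbb{C}_{i}$) or $w\neq 0$.

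The case $w=0$ is elementary: all terms carrying $w,\bar w$ drop out and $P=4(x_{0}-x_{1}^{2})\,a^{2}b^{2}$, where $z=x_{0}+ix_{1}$. If $x_{0}=x_{1}^{2}$, i.e. $q=x_{1}^{2}+x_{1}i\in\gamma$, then $P\equiv 0$ and the whole fibre lies on $\mathcal{K}$, which is part (1). Otherwise $P$ is a nonzero multiple of $a^{2}b^{2}$, whose only zeros are the double roots $[1:0]$ and $[0:1]$. To interpret these I would compute the singular locus of $\mathcal{K}$ by setting $\nabla F=0$; factoring the partial derivatives shows it consists of the three lines $\{X_{1}=X_{3}=0\}$, $\{X_{0}=X_{2}=0\}$ and $\{X_{0}=X_{1}=0\}$, the last being the fibre over $\infty$. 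The points $[1:0]$ and $[0:1]$ of the fibre are exactly $[1:0:z:0]$ and $[0:1:0:\bar z]$, lying on the first two singular lines; hence for $q\in\mathbb{C}_{i}\setminus\gamma$ the fibre meets $\mathcal{K}$ in precisely two singular points, which is part (2).

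For $w\neq 0$ the curve $\pi^{-1}(q)$ is disjoint from the singular locus, since the three singular lines all project under $\pi$ into $\mathbb{C}_{i}\cup\{\infty\}$, which $q$ avoids. Thus every intersection point is smooth and a drop in cardinality can only arise from a genuine tangency, i.e. from a multiple root of the quartic $P$ (now of full degree $4$, its leading coefficient $w^{2}$ being nonzero). The decisive simplification is that on $\Gamma$, where $z=\bar z=\tfrac14-|w|^{2}$, the quartic collapses to the perfect square
\begin{equation*}
P=\left(w a^{2}+ab-\bar w b^{2}\right)^{2},
\end{equation*}
whose quadratic factor has discriminant $1+4|w|^{2}>0$ and hence two distinct roots; these give exactly two points of the fibre, each met with multiplicity two, i.e. two smooth tangencies, proving one implication of part (3). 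The vertex $q=\tfrac14$ has $w=0$ and is already covered by part (2), which is why it is excluded.

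The remaining and hardest step is the converse of part (3): that for $w\neq 0$ a multiple root of $P$ forces $q\in\Gamma$. This amounts to showing that the discriminant of $P$, regarded as a polynomial in $z,\bar z,w,\bar w$, vanishes on $\{w\neq 0\}$ only along $\Gamma$. I would establish this by computing the quartic discriminant and factoring it, using the perfect-square identity above to recognize the defining function of $\Gamma$ (equivalently $z=\bar z$ together with $z+\bar z=\tfrac12-2|w|^{2}$) as the essential factor, the remaining factors vanishing only where $w=0$. I expect this discriminant factorization to be the main obstacle: it is a lengthy elimination, and the difficulty lies in organizing the computation so that the paraboloid equation emerges cleanly rather than being lost in the high-degree expansion. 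Once this is in hand, the three cases are mutually exclusive and exhaust the locus where the cardinality differs from $4$, completing the proof.
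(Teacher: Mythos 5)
First, a remark on the comparison you were asked for: the paper does not prove this theorem at all --- it is quoted verbatim from \cite{gensalsto} as background on the one known quartic example --- so there is no internal proof to measure your attempt against, and it must be judged on its own. Your strategy (restrict $F$ to the parametrized fibre $[a:b]\mapsto[a:b:az-b\bar w:aw+b\bar z]$ and analyse the resulting binary quartic $P$) is the natural direct one, and every computation you actually carry out checks: the coefficients of $P$ are $w^2$, $2w(1-z+\bar z)$, $(z-\bar z)^2+2|w|^2+2(z+\bar z)$, $-2\bar w(1+z-\bar z)$, $\bar w^2$, which collapses to $4(x_0-x_1^2)a^2b^2$ when $w=0$; the singular locus of $\mathcal{K}$ is exactly the three lines you name (the factorizations $\partial F/\partial X_2=2X_1(X_1X_2-X_0X_3+X_0X_1)$ and $\partial F/\partial X_3=-2X_0(X_1X_2-X_0X_3-X_0X_1)$ force $X_0X_1=0$, and the rest follows); those lines project under $\pi$ into $\mathbb{C}_i\cup\{\infty\}$; and on $\Gamma$ one indeed has $P=(wa^2+ab-\bar wb^2)^2$ with $1+4|w|^2>0$. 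This disposes of (1), (2) and the forward implication of (3).

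The genuine gap is the step you yourself flag and do not perform: for $w\neq0$, a multiple root of $P$ forces $q\in\Gamma$. As written this is an announcement (``compute the discriminant of the quartic and factor it''), not an argument, and it is the only part of the statement with real content beyond bookkeeping: without it you have neither the converse of (3) nor the exhaustiveness asserted in the opening sentence, namely that for $q\notin\mathbb{C}_i\cup\Gamma$ the fibre meets $\mathcal{K}$ in exactly four points. The discriminant of $P$ is a degree-six polynomial in its five coefficients, hence a large expression in $z,\bar z,w,\bar w$, and nothing in your text excludes extra components of its zero locus in the region $w\neq0$. If you want to finish along computational lines, a more tractable reduction is available through the ruling: the smooth locus of $\mathcal{K}$ is swept out by the lift coordinates $[1,u,v^2+vi,u(v^2-vi)]$ of the slice regular function $x\mapsto x^2+xi$ (one checks $X_1X_2-X_0X_3=2uvi$ and $2X_0X_1(X_1X_2+X_0X_3)=4u^2v^2$, so $F$ vanishes identically there), and imposing $\pi=[1,z+wj]$ gives $u(v^2-vi-\bar z)=w$ together with the single quartic $(v^2+vi-z)(v^2-vi-\bar z)+|w|^2=0$ in the one variable $v$; the fibre count for $w\neq 0$ becomes the count of distinct roots of this quartic, whose multiple-root locus is far easier to pin down and is closer to how \cite{gensalsto} builds $\mathcal{K}$ in the first place. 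Until one of these eliminations is actually carried out, the proof is incomplete.
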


This last case was ``reverse engineered'' from the study of the possible (non-constant) OCS's 
defined on $\mathbb{R}^{4}\setminus\lambda$. 
In all the previous cases, given a surface (or a family of surfaces), we have described its conformal geometry.
As already said in the introduction, we are interested in the relation between the class of slice regular functions 
and the geometry of submanifolds in $\mathbb{CP}^{3}$ and, therefore, later we will switch our attention
in this sense.
%
%
%
%

\section{Slice regular functions}\label{slicesec}
We have already defined in the introduction the sphere of imaginary units $\mathbb{S}\subset\mathbb{H}$
and the family of \textit{slices} $\mathbb{C}_{I}\subset\mathbb{H}$.
Given $x=\alpha+I\beta\in\mathbb{H}\setminus\mathbb{R}$ we also define the sphere 
$$\mathbb{S}_{x}:=\{y\in\mathbb{H}\,|\,y=\alpha+J\beta,\,J\in\mathbb{S}\},$$
and, for any $I\in\mathbb{S}$, the family of \textit{semislices} as
$$\mathbb{C}_{I}^{+}:=\{x\in\mathbb{H}\,|\,x=\alpha+I\beta,\,\alpha\in\mathbb{R}, \beta\geq 0\}.$$
%
%
If $x=\alpha+I\beta$ is a quaternion, its \textit{usual conjugation} will be denoted by $x^{c}=\alpha-I\beta$.

Given a domain $\Omega\subset \mathbb{H}$, a  function $f:\Omega\rightarrow\mathbb{H}$
is said to be \textit{Cullen regular} if, for any $I\in\mathbb{S}$, the restriction 
$f|_{\Omega\cap\mathbb{C}_{I}}$ is a holomorphic function with respect to $I$.
In other words if, for any $I\in\mathbb{S}$ the following equation holds
\begin{equation}\label{eqCul}
\frac{1}{2}\left(\frac{\partial}{\partial\alpha}+I\frac{\partial}{\partial\beta}\right)f|_{\Omega\cap\mathbb{C}_{I}}\equiv 0.
\end{equation}

The theory of Cullen regular functions was born to include polynomials and power series
of the form $\sum_{k\in\mathbb{N}}q^{k}a_{k}$, where $\{a_{k}\}_{k\in\mathbb{N}}\subset\mathbb{H}$. 
This theory, introduced by G. Gentili 
and D. Struppa in~\cite{gentilistruppa} and based on a definition by C. Cullen (see~\cite{cullen}),
is revealing, in the last years, to be very rich and interesting both from a theoretical point
of view and (as this paper and~\cite{gensalsto} show), from the point of view of the applications.

Even though at first there was an explosion of results regarding, for instance, the rigid behaviour of 
such regular functions 
\cite{colgensabstru,bohrsarfatti,blochlandau,landautoeplitz,gentilistoppato,gentilistoppato2}, 
and the possibility of expanding them in different kind of power series
\cite{genstoseries,stoppato,stoppatopoles}, the formalism used to introduce the theory turns out to be inadequate to study such functions, when defined over particular
domains: namely, domains with empty intersection with the real axis.
The most simple example of what could go wrong is the function 
$f:\mathbb{H}\setminus\mathbb{R}\rightarrow \mathbb{H}$ defined to be equal to
some constant $q_{0}\in\mathbb{H}$ everywhere but on a fixed slice $\mathbb{C}_{J}$ on which
it is set to be equal to some different constant $q_{1}\in\mathbb{H}$ (compare with the function defined in the introduction). 
Such functions of course satisfy Equation~\eqref{eqCul}, if restricted to any slice,
but are not even continuous.
Later in this section, when the key features of slice regularity will be outlined, we will 
show another most significative example of a class $\mathcal{C}^{\infty}$ function which satisfies the 
definition of Cullen regularity but, for some reason, we do not want in our theory (see Example~\ref{ellips}).
Therefore, to describe the theory of slice regular functions on general domains, we will adopt 
a different approach.
%
The approach that we will use is the one introduced by
R. Ghiloni and A. Perotti in~\cite{ghiloniperotti}, which exploit the use of the so-called 
\textit{stem functions} to define the class of continuous functions to which we will
apply the definition of regularity.
The use of stem functions might seems unnecessarily technical, however, by using
precisely these techniques many results in the theory of slice regular functions were 
extended to a more general setting and many others were proven for the first time
\cite{altavillaPhD,altavilla,altavilladiff,ghiloniperotti,ghiloniperotti2,ghiloniperotti3}.
We will, then, describe our family of functions, directly by using this approach.

Let $\mathbb{H}_{\mathbb{C}}$ denote the real tensor product $\mathbb{H}\otimes_\mathbb{R} \mathbb{C}$. An element of $\mathbb{H}_{\mathbb{C}}$ will be of the form 
$p=x+\sqrt{-1}y$, where $x$ and $y$ are quaternions. Given another element $q=z+\sqrt{-1}t$ in
$\mathbb{H}_{\mathbb{C}}$, we define the following product,
\begin{equation*}
pq=xz-yt+\sqrt{-1}(xt+yz).
\end{equation*}
Of course $\sqrt{-1}$ plays the role of a complex structure in $\mathbb{H}_{\mathbb{C}}$.
With the previous product, the space $\mathbb{H}_{\mathbb{C}}$ results to be a complex alternative
algebra with unity.
Given an element $p=x+\sqrt{-1}y\in\mathbb{H}_{\mathbb{C}}$ we define the following two commuting
conjugations: 
\begin{itemize}
\item $p^{c}=x^{c}+\sqrt{-1}y^{c}$;
\item $\bar p=\bar x+\sqrt{-1}\bar y$.
\end{itemize}

\begin{definition}[Stem function]
Given a domain $D$ in $\mathbb{C}$ a function $F:D\rightarrow \mathbb{H}_{\mathbb{C}}$
is said to be a \textit{stem function} if, for any $z\in D$ such that $\bar z\in D$
one has that $F(\bar z)=\overline{F(z)}$.
\end{definition}
The condition appearing in the previous definition translates in the following way: a function $F$ from a domain 
$D\subset\mathbb{C}$ to $\mathbb{H}_{\mathbb{C}}$ can be represented as 
$F(\alpha+i\beta)=F_{1}(\alpha+i\beta)+\sqrt{-1}F_{2}(\alpha+i\beta)$, with $F_{1}$ and $F_{2}$
quaternionic-valued functions;
 then $F$ is a stem function if $F_{1}$ and $F_{2}$ are even and odd
with respect to $\beta$, respectively.
For this reason there is no loss of generality in taking $D$ to be symmetric with respect to the real axis.

We will say that a stem function $F=F_{1}+\sqrt{-1}F_{2}$ has a certain regularity (e.g.: is of
class $\mathcal{C}^{n}$, for some $n\in\mathbb{N}\cup\{\infty,\omega\}$), if its components
have that regularity.
%

\begin{definition}[Circularization] Given a set $D\subset\mathbb{C}$ we define its \textit{circularization}
as the subset  $\Omega_{D}$ of $\mathbb{H}$ determined by the following equality
\begin{equation*}
\Omega_{D}:=\{\alpha+I\beta\in\mathbb{H}\,|\,\alpha+i\beta\in D,\,I\in\mathbb{S}\}.
\end{equation*}
If $\Omega\subset\mathbb{H}$ is such that 
$\Omega=\cup_{x\in\Omega}\mathbb{S}_{x}$ and $\Omega=\Omega_{\Omega\cap \mathbb{C}_{I}}$ for any $I\in\mathbb{S}$, then it will be called a \textit{circular set}.
For any $I\in\mathbb{S}$ we will use the following notation: $D_I=\Omega_{D}\cap\mathbb{C}_I$. 
\end{definition}

\begin{remark}
If $D$ is symmetric with respect to the real axis and $D\cap\mathbb{R}=\emptyset$, then 
$\Omega_{D}\simeq D^{+}\times \mathbb{S}$, where $D^{+}$ is the intersection 
between $D$ and the complex upper half plane: $D^{+}=D\cap \mathbb{C}^{+}$.
\end{remark}

\begin{definition}[Slice function]
Let $\Omega_{D}$ be a circular set in $\mathbb{H}$. A function $f:\Omega_{D}\rightarrow\mathbb{H}$
is said to be a \textit{(left) slice function} if it is induced by a stem function $F=F_{1}+\sqrt{-1}F_{2}$ (denoted 
by $f=\mathcal{I}(F)$), in the following way:
\begin{equation*}
f(\alpha+I\beta)=F_{1}(\alpha+i\beta)+IF_{2}(\alpha+i\beta),\quad\forall\alpha+I\beta\in\Omega_{D}.
\end{equation*}
The family of slice functions defined over some circular set $\Omega_{D}$ will be denoted by 
$\mathcal{S}(\Omega_{D})$.
We will say that a slice function $f=\mathcal{I}(F)$ has a certain regularity (e.g.: is of class $\mathcal{C}^{n}$, for some 
$n\in\mathbb{N}\cup\{\infty,\omega\}$) if the inducing stem function $F$ has that regularity.
The space of slice functions of class $\mathcal{C}^{n}$ defined over $\Omega_{D}$ will be denoted 
by $\mathcal{S}^{n}(\Omega_{D})$.
\end{definition}
For each $n$, the family $\mathcal{S}^n(\Omega_{D})$ is a real vector space and a quaternionic right module, i.e.:
for any $f,g\in\mathcal{S}^n(\Omega_{D})$, for any $c\in\mathbb{R}$ and for any $q\in\mathbb{H}$,
$cf+gq\in\mathcal{S}^n(\Omega_{D})$.

Of course one can define analogously \textit{right} slice functions, by putting, in the previous
definition, the complex imaginary unit at the right of $F_{2}$. The uprising theory would be 
completely symmetric with respect to the one described here.

The nature of stem functions yields the well-posedness of the slice function's definition. In fact if 
$f=\mathcal{I}(F_{1}+\sqrt{-1}F_{2}):\Omega_{D}\rightarrow\mathbb{H}$ is a slice function and $x=\alpha+I\beta$ is a quaternion
in $\Omega_{D}$, then $f(\alpha+(-I)(-\beta))=F_{1}(\alpha-i\beta)-IF_{2}(\alpha-i\beta)=F_{1}(\alpha+i\beta)+IF_{2}(\alpha+i\beta)=f(\alpha+I\beta)$. 
Moreover one can see, from the definition, that a (left) slice function is nothing but a quaternionic 
function of one quaternionic variable that is \textit{quaternionic (left) affine with respect to the imaginary unit}.
This fact, together with the next (basic but) fundamental result, shows that there is no loss
of generality in choosing circular sets as domains of definition for slice functions and that,
for any slice function there is a unique inducing stem function.

\begin{theorem}[Representation formula, \cite{ghiloniperotti}, Proposition 6]
Let $f\in\mathcal{S}(\Omega_{D})$ be a slice function defined over any circular set $\Omega_{D}$.
Then, for any $J\neq K\in\mathbb{S}$, $f$ is uniquely determined by its values over $\mathbb{C}_{J}^{+}$
and $\mathbb{C}_{K}^{+}$ by the following formula:
\begin{equation*}
f(\alpha+I\beta)=(I-K)(J-K)^{-1}f(\alpha+J\beta)-(I-J)(J-K)^{-1}f(\alpha+K\beta),\,\forall\,\alpha+I\beta\in\Omega_{D}.
\end{equation*}
 In particular if $K=-J$, we get the following simpler formula
\begin{equation}\label{repreform}
f(x)=\frac{1}{2}\left[f(\alpha +J\beta )+f(\alpha -J\beta )-IJ\left(f(\alpha +J\beta )-f(\alpha -J\beta )\right)\right].
\end{equation}
\end{theorem}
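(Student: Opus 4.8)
The plan is to unwind the definition of a slice function and reduce the identity to a two-point interpolation along the sphere $\mathbb{S}_x$. Fix $x=\alpha+I\beta\in\Omega_{D}$ and write $f=\mathcal{I}(F)$ with $F=F_{1}+\sqrt{-1}F_{2}$. Setting $a:=F_{1}(\alpha+i\beta)$ and $b:=F_{2}(\alpha+i\beta)$, two quaternions that do \emph{not} depend on the chosen imaginary unit, the very definition of $\mathcal{I}(F)$ gives $f(\alpha+L\beta)=a+Lb$ for every $L\in\mathbb{S}$. Thus, restricted to $\mathbb{S}_x$, the function $f$ is nothing but the quaternionic-affine map $L\mapsto Lb+a$, and the whole statement becomes the claim that such a map is recovered from its values at two distinct units $J,K$.

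First I would solve the linear system produced by the two prescribed values. From $f(\alpha+J\beta)=a+Jb$ and $f(\alpha+K\beta)=a+Kb$, subtracting yields $(J-K)b=f(\alpha+J\beta)-f(\alpha+K\beta)$. Here the hypothesis $J\neq K$ enters decisively: $J-K$ is a nonzero quaternion, hence invertible, so $b=(J-K)^{-1}\bigl(f(\alpha+J\beta)-f(\alpha+K\beta)\bigr)$ and then $a=f(\alpha+J\beta)-Jb$. Substituting these into $f(\alpha+I\beta)=a+Ib$ and collecting terms, using only the elementary identity $(J-K)+(I-J)=I-K$, reproduces exactly the asserted formula. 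Equivalently -- and this is the conceptual content I would emphasize -- both sides of the claimed identity are quaternionic-affine functions of $I$ of the form $I\mapsto IP+Q$; they agree at $I=J$ and $I=K$ by a one-line check using $(J-K)(J-K)^{-1}=1$ and $(K-J)(J-K)^{-1}=-1$, and two such affine maps that coincide at two distinct units must coincide identically, since their difference $I\mapsto IP+Q$ vanishing at $J$ and $K$ forces $(J-K)P=0$, whence $P=Q=0$.

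I expect the only genuine subtlety to be bookkeeping with non-commutativity: a head-on verification that the coefficient of $b$ on the right-hand side equals $I$ (that is, $(I-K)(J-K)^{-1}J-(I-J)(J-K)^{-1}K=I$) is unpleasant, and the whole point of the argument above is to sidestep it by interpolating rather than expanding. Finally, the special case is immediate: putting $K=-J$ gives $J-K=2J$ and $(2J)^{-1}=-J/2$, so $a=\tfrac{1}{2}\bigl(f(\alpha+J\beta)+f(\alpha-J\beta)\bigr)$ and $b=-\tfrac{J}{2}\bigl(f(\alpha+J\beta)-f(\alpha-J\beta)\bigr)$, and after simplifying with $J^{2}=-1$ one obtains~\eqref{repreform}. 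The uniqueness claim follows at once, because the formula recovers the full stem data $a=F_{1}$ and $b=F_{2}$ -- and hence $f$ on all of $\Omega_{D}$ -- from the values of $f$ on $\mathbb{C}_{J}^{+}$ and $\mathbb{C}_{K}^{+}$ alone: for $\beta\geq 0$ the points $\alpha+J\beta$ and $\alpha+K\beta$ lie in these semislices, while the case $\beta<0$ reduces to this one via $\alpha+I\beta=\alpha+(-I)(-\beta)$ together with the well-posedness of slice functions already noted.
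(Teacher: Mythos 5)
Your argument is correct and is exactly the route the paper indicates: it only remarks that the formula follows because an affine map (here $L\mapsto F_1+LF_2$ on the sphere $\mathbb{S}_x$) is recovered from its values at two points, and you have carried out precisely that interpolation, including the invertibility of $J-K$, the identity $(J-K)+(I-J)=I-K$, the specialization $K=-J$ via $(2J)^{-1}=-J/2$, and the reduction of the case $\beta<0$ to the semislices through well-posedness. Nothing is missing.
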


This well-known result can be easily proven having in mind that a straight line parametrized by an affine 
function in an affine space can be recovered simply by two of its values.
%

Given any slice function, it is possible to define its spherical derivative as follows.
\begin{definition}[Spherical derivative]
Let $f=\mathcal{I}(F_{1}+\sqrt{-1}F_{2})\in\mathcal{S}(\Omega_{D})$ and $x=\alpha+I\beta\in\Omega_{D}\setminus\mathbb{R}$.
The \textit{spherical derivative} of $f$ is the slice function $\partial_{s}f$ induced by the stem function
$F_{2}(z)/Im(z)$.
\end{definition}

For any point $x\in\Omega_{D}\setminus\mathbb{R}$, the spherical derivative of a slice function $f$ can be also defined as
\begin{equation*}
\partial_{s}f(x)=\frac{1}{2}Im(x)^{-1}(f(x)-f(x^{c})).
\end{equation*}
The spherical derivative of any slice function is constant on each sphere $\mathbb{S}_{x}$ contained
in the domain of definition of the function. Moreover, if the function $f$ is of class at least $\mathcal{C}^{1}$,
then its spherical derivative can be extended continuously to the real line (see~\cite[Proposition 7]{ghiloniperotti}).

As the reader can see, the spherical derivative of a slice function is not a genuine derivative, i.e. it is
not defined as some sort of limit of incremental ratio. However, as we will see in Theorem~\ref{reprediff}, it is useful to control the
behaviour of a slice function alongside the spheres $\mathbb{S}_{x}$ contained in its domain of definition.
In this view, we are going to define now the partial derivatives along the remaining directions,
which are the slices. First observe that, for a sufficiently regular stem function $F=F_{1}+\sqrt{-1}F_{2}$,
if $z=\alpha+i\beta$ is a point in the domain of $F$, 
the partial derivatives $\partial F_{1}/\partial \alpha$ and $\sqrt{-1}(\partial F_{2}/\partial\beta)$
are stem functions too.

\begin{definition}[Slice Derivative]
Given a function $f\in\mathcal{S}^{1}(\Omega_{D})$ we define its \textit{slice derivatives} as the following
continue slice functions defined over $\Omega_{D}$:
\begin{equation*}
\begin{array}{c}
\displaystyle\frac{\partial f}{\partial x}:=\mathcal{I}\left(\displaystyle\frac{\partial F}{\partial z}=\displaystyle\frac{1}{2}\left(\displaystyle\frac{\partial F}{\partial \alpha}-\sqrt{-1}\displaystyle\frac{\partial F}{\partial \beta}\right)\right)\\
\displaystyle\frac{\partial f}{\partial x^{c}}:=\mathcal{I}\left(\displaystyle\frac{\partial F}{\partial \bar z}=\displaystyle\frac{1}{2}\left(\displaystyle\frac{\partial F}{\partial \alpha}+\sqrt{-1}\displaystyle\frac{\partial F}{\partial \beta}\right)\right)
\end{array}
\end{equation*}
\end{definition}

As we said before, $\sqrt{-1}$ is a complex structure for $\mathbb{H}_{\mathbb{C}}$. But then a stem function $F:D\rightarrow\mathbb{H}_{\mathbb{C}}$ is holomorphic
if $\partial F/\partial \bar z=0$. In this way we naturally define slice regularity as follows.
\begin{definition}[Slice regularity]
A (left) slice function $f=\mathcal{I}(F)\in\mathcal{S}^{1}(\Omega_{D})$ is said to be \textit{(left) slice regular}
if the slice derivative $\partial f/\partial x^{c}$ vanishes everywhere. The set of slice regular functions
defined over a certain domain $\Omega_{D}$ will be denoted by $\mathcal{SR}(\Omega_{D})$.
\end{definition}
The set $\mathcal{SR}(\Omega_{D})$ is a real vector space and quaternionic right module. 
Again, the theory does not change if we consider right slice regular function instead of left ones.
\begin{remark}\label{charareg}
Due to the Representation Formula, a slice function $f$ is slice regular if and only if
for any fixed $I\in\mathbb{S}$, Equation~\eqref{eqCul} holds.
Moreover, if a function $f$ is slice regular, then its slice derivative $\partial f/\partial x$ is regular as well.
All these properties are discussed in~\cite{ghiloniperotti}.
\end{remark}

There is a formula that links the value of the spherical and the slice derivatives.
\begin{proposition}[\cite{altavilladiff}, Proposition 12]
Let $f\in\mathcal{SR}(\Omega_{D})$ be a slice regular function, then the following formula holds:
\begin{equation*}
\frac{\partial f}{\partial x}(x)=2Im(x)\left(\frac{\partial}{\partial x}\partial_{s}f\right)(x)+\partial_{s}f(x),\quad\forall x=\alpha+J\beta\in\Omega_{D}.
\end{equation*}
\end{proposition}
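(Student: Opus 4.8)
The plan is to reduce the statement to an identity between the real components of the inducing stem function $F=F_{1}+\sqrt{-1}F_{2}$, translating the three ingredients of the formula—the slice derivative $\partial/\partial x$, the spherical derivative $\partial_{s}$, and left multiplication by $Im(x)$—into operations on $F_{1}$ and $F_{2}$. The only input from regularity is the holomorphicity condition $\partial F/\partial\bar z=0$; written in components it gives the Cauchy--Riemann identities
\begin{equation*}
\frac{\partial F_{1}}{\partial\alpha}=\frac{\partial F_{2}}{\partial\beta},\qquad \frac{\partial F_{2}}{\partial\alpha}=-\frac{\partial F_{1}}{\partial\beta},
\end{equation*}
which I would record at the outset and invoke only at the very end.

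First I would fix a point $x=\alpha+I\beta$ with $\beta\neq 0$ and compute each side. For the left-hand side, regularity reduces $\partial F/\partial z$ to $\partial F/\partial\alpha$, so that
\begin{equation*}
\frac{\partial f}{\partial x}(\alpha+I\beta)=\frac{\partial F_{1}}{\partial\alpha}+I\frac{\partial F_{2}}{\partial\alpha}.
\end{equation*}
For the right-hand side, I would first observe that $\partial_{s}f=\mathcal{I}(F_{2}/Im)$ is induced by a stem function whose $\sqrt{-1}$-component vanishes, since $F_{2}/\beta$ is even in $\beta$; hence $\partial_{s}f(\alpha+I\beta)=F_{2}/\beta$ carries no $I$-dependence and its slice derivative is
\begin{equation*}
\left(\frac{\partial}{\partial x}\partial_{s}f\right)(\alpha+I\beta)=\frac{1}{2}\frac{\partial}{\partial\alpha}\!\left(\frac{F_{2}}{\beta}\right)-\frac{I}{2}\frac{\partial}{\partial\beta}\!\left(\frac{F_{2}}{\beta}\right).
\end{equation*}

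The heart of the computation is the assembly of $2\,Im(x)\,(\partial/\partial x\,\partial_{s}f)(x)+\partial_{s}f(x)$, where $Im(x)=I\beta$ acts by left quaternionic multiplication. The one algebraic point that needs attention is the product $I\beta\cdot I=-\beta$, valid because $\beta$ is real and $I^{2}=-1$; it converts the $\partial_{\beta}$-term into an honest (commuting) scalar multiple. The two $\partial_{\beta}$-contributions then combine through the product rule, $\beta\,\partial_{\beta}(F_{2}/\beta)+F_{2}/\beta=\partial F_{2}/\partial\beta$, so the singular factor $1/\beta$ cancels and the whole right-hand side collapses to $\partial F_{2}/\partial\beta+I\,\partial F_{2}/\partial\alpha$. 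Substituting the first Cauchy--Riemann identity $\partial F_{2}/\partial\beta=\partial F_{1}/\partial\alpha$ then matches the left-hand side verbatim.

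I expect the main obstacle to be bookkeeping rather than conceptual: since $Im(x)$ multiplies on the left while the derivatives $\partial F_{i}/\partial(\cdot)$ are themselves quaternions, the simplification $I\beta I=-\beta$ and the cancellation of $1/\beta$ must be carried out without ever commuting two quaternions past one another. Once the case $\beta\neq 0$ is established, the real points follow by continuity: there $Im(x)=0$ and the identity reduces to $\partial f/\partial x=\partial_{s}f$, consistent because $F_{2}$ is odd, so $F_{2}(\alpha)=0$ and $F_{2}/\beta\to\partial F_{2}/\partial\beta$ as $\beta\to0$, which equals $\partial F_{1}/\partial\alpha$ again by Cauchy--Riemann.
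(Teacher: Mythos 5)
Your computation is correct: the reduction to the components of the stem function, the use of the non-regular form of the slice derivative for $\partial_{s}f$ (which is indeed not slice regular in general), the cancellation $\beta\,\partial_{\beta}(F_{2}/\beta)+F_{2}/\beta=\partial_{\beta}F_{2}$, and the final appeal to $\partial_{\beta}F_{2}=\partial_{\alpha}F_{1}$ all check out, and you correctly keep the quaternionic factors in order when simplifying $I\beta I=-\beta$. Note that the paper itself states this proposition only by citation to~\cite{altavilladiff} and gives no proof, so there is no in-paper argument to compare against; your stem-function argument is the natural one and matches the framework the paper uses elsewhere.
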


Any slice regular function is Cullen regular (see~\cite[Definition 1.1]{genstostru}), but, if 
the domain of definition does not intersects the real line then the converse is not true in general.
This issue was studied in~\cite{ghiloniperotti2}, where the authors show that asking for
a generic quaternionic function defined over a domain without real points to be Cullen regular is not enough 
to obtain a satisfactory theory:
 in general you lose \textit{sliceness} that is equivalent to losing the
Representation Formula and so, many fundamental theorems 
in this theory have no chance to hold in this very general context.
However the author of the present paper believes that this issue should be further studied: 
some interesting subclasses might arise from this investigation.

%
%
%

\begin{example}\label{ellips}
 Fix a real number $\lambda\notin\{-1,0,1\}$. Let $x=\alpha+I_x\beta$ be any non-real quaternion and 
define $f:\mathbb{H}\setminus\mathbb{R}\rightarrow\mathbb{H}$ as
 \begin{equation*}
  f(x)=I_x+\lambda i I_xi.
 \end{equation*}
 The function $f$ is a class $\mathcal{C}^{\infty}$ quaternionic functions of one quaternionic variable that is Cullen regular but not 
slice regular. 
In fact, as it is pointed out in~\cite[proof of Proposition 6]{ghiloniperotti}, for any $J\neq K \in\mathbb{S}$
the odd part of the stem function inducing $f$ would be
$F_{2}(\alpha+i\beta)=(J-K)^{-1}(f(\alpha+J\beta)-f(\alpha+K\beta))$. But, computing $F_{2}$ for $J=j$
and $K=k$, one easily obtains $F_{2}(z)=1-\lambda$ that is not odd.

For sake of completeness, if $I_{x}=ai+bj+ck$, then, for any $x\in\mathbb{H}\setminus\mathbb{R}$, the image of $\mathbb{S}_{x}$ is the pure imaginary ellipsoid parametrized as
 $$
 a(1-\lambda)i+b(1+\lambda)j+c(1+\lambda)k,
 $$
 with $a^{2}+b^{2}+c^{2}=1$. 
%
%
%
%
\end{example}

A consolidated and well known result about slice regular functions is the \textit{Splitting Lemma}. It says that any slice regular function, if properly
restricted, admits a splitting into two
actual complex holomorphic functions. A proof of this result can be found in~\cite{colgensabstru, ghiloniperotti3}, the first with the additional hypothesis that 
the domain of definition intersects the real axis.

\begin{lemma}[\cite{colgensabstru,ghiloniperotti3}]\label{splitting}
 Let $f\in\mathcal{SR}(\Omega_D)$. Then, for each $J\in\mathbb{S}$ and each $K\bot J$, $K\in\mathbb{S}$, there exist two holomorphic functions
 $G,H:D_J\rightarrow \mathbb{C}_J$ such that 
 \begin{equation*}
  f_J=G+HK.
 \end{equation*}
\end{lemma}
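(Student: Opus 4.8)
The plan is to exploit the fact, recorded in Remark~\ref{charareg}, that slice regularity is equivalent to the Cauchy--Riemann-type condition~\eqref{eqCul} holding on every slice, and then to read that condition componentwise after decomposing $\mathbb{H}$ over $\mathbb{C}_J$. Concretely, fix $J\in\mathbb{S}$ and $K\in\mathbb{S}$ with $K\bot J$. Since $J,K$ are orthogonal imaginary units, $JK$ is again an imaginary unit orthogonal to both, so $\{1,J,K,JK\}$ is an orthonormal real basis of $\mathbb{H}$. This gives a direct-sum decomposition
\[
\mathbb{H}=\mathbb{C}_J\oplus \mathbb{C}_J K,\qquad \mathbb{C}_J=\mathrm{span}_{\mathbb{R}}\{1,J\},\quad \mathbb{C}_J K=\mathrm{span}_{\mathbb{R}}\{K,JK\},
\]
where every $q\in\mathbb{H}$ is written uniquely as $q=z_1+z_2K$ with $z_1,z_2\in\mathbb{C}_J$.

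First I would use this decomposition to define $G,H\colon D_J\to\mathbb{C}_J$ as the two coordinate functions of the restriction $f_J$, so that by construction $f_J=G+HK$; since $f\in\mathcal{SR}(\Omega_D)\subset\mathcal{S}^1(\Omega_D)$, the function $f_J$ is of class $\mathcal{C}^1$ and hence so are $G$ and $H$. It then remains only to prove that $G$ and $H$ are holomorphic from $(D_J,J)$ to $(\mathbb{C}_J,J)$, i.e.\ that each satisfies the Cauchy--Riemann equation $\partial_\alpha(\cdot)+J\,\partial_\beta(\cdot)=0$.

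To obtain this, I would substitute $f_J=G+HK$ into~\eqref{eqCul} evaluated on $\mathbb{C}_J$, obtaining
\[
\bigl(\partial_\alpha G+J\,\partial_\beta G\bigr)+\bigl(\partial_\alpha H+J\,\partial_\beta H\bigr)K=0.
\]
The crucial point is that $G,H$ take values in the commutative field $\mathbb{C}_J$, so left multiplication by $J$ commutes with them and produces no sign coming from the quaternionic noncommutativity, while $K$ stays as a passive right factor. Consequently the first summand lies in $\mathbb{C}_J$ and the second in $\mathbb{C}_J K$; by the directness of the decomposition both must vanish separately, and these are exactly the two desired Cauchy--Riemann equations.

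The step requiring the most care is the bookkeeping of the quaternionic multiplication in the displayed identity: one must verify that $J$ really passes through the $\mathbb{C}_J$-valued coefficients $\partial_\beta G,\partial_\beta H$ without interacting with $K$, so that the two components genuinely decouple. Apart from this, the argument is purely linear-algebraic. I would emphasize that the proof needs no hypothesis on $\Omega_D\cap\mathbb{R}$: the only analytic input is the slice-by-slice characterization of Remark~\ref{charareg}, which holds for slice regular functions on arbitrary circular domains, so the classical Splitting Lemma extends verbatim to the present setting.
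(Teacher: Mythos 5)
Your argument is correct and is essentially the standard proof of the Splitting Lemma found in the cited sources \cite{colgensabstru,ghiloniperotti3}; the paper itself only states the lemma with references and does not reprove it. The decomposition $\mathbb{H}=\mathbb{C}_J\oplus\mathbb{C}_JK$, the observation that left multiplication by $J$ preserves both summands (so the two Cauchy--Riemann equations decouple), and the appeal to the slice-by-slice characterization of regularity (Remark~\ref{charareg}, valid on circular domains with or without real points, with $G,H$ possibly behaving independently on the components of a disconnected $D_J$) are exactly the right ingredients.
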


Observe that $G$ and $H$ are defined over the whole $D_{J}$. This means that, if $D_{J}$
is disconnected and the disjoint union of $D_1$ and $D_2$, then, $G$ and $H$ could have unrelated different behaviour on $D_1$ and $D_2$. A particular case is when 
$\Omega_{D}\cap\mathbb{R}=\emptyset$, where, \textit{a priori} the function $G$ and $H$ can
have  different behaviours if restricted either to $D_{J}^{+}$ or $D_{J}^{-}$.

Speaking now about operations between slice functions,
in general, their pointwise product is not a slice 
function\footnote{For instance, if $f(q)=qa$ and $g(q)=q$, with $a\in\mathbb{H}\setminus\mathbb{R}$, then $h(q)=f(q)g(q)=qaq$ is not a slice function.}.
However, there exists another notion of product which works well in our context.
The following, introduced in~\cite{colgensabstru,gentilistoppato2} for slice regular functions defined over domains that
do intersect  $\mathbb{R}$ and in~\cite[Definition 9]{ghiloniperotti} for slice functions (in the context of real alternative algebras),
is the notion that we will use. 
\begin{definition}[Slice product]
Let $f=\mathcal{I}(F)$, $g=\mathcal{I}(G)$ $\in\mathcal{S}(\Omega_{D})$ the \textit{(slice) product} of $f$ and $g$ is the slice function
\begin{equation*}
f\cdot g:=\mathcal{I}(FG)\in\mathcal{S}(\Omega_{D}).
\end{equation*}
\end{definition}
 Explicitly, if $F=F_{1}+\sqrt{-1}F_{2}$ and $G=G_{1}+\sqrt{-1}G_{2}$ are stem functions, then $FG=F_{1}G_{1}-F_{2}G_{2}+\sqrt{-1}(F_{1}G_{2}+F_{2}G_{1})$.
 \begin{remark}
   Let $f(x)=\sum_jx^ja_j$ and $g(x)=\sum_kx^kb_k$ be polynomials or, more generally, converging power series with coefficients $a_j,b_k\in\mathbb{H}$. The usual product of polynomials can be extended to power series in the following way: the \textit{star product} $f* g$ of $f$ and $g$ is the convergent power series
 defined by setting
\begin{equation*}
 (f*g)(x):=\sum_nx^n\left(\sum_{j+k=n}a_jb_k\right).
\end{equation*}
In~\cite[Proposition 12]{ghiloniperotti} it was proven that the product of $f$ and $g$, viewed as slice functions, coincide with the star product $f*g$, i.e.: 
$\mathcal{I}(FG)=\mathcal{I}(F)*\mathcal{I}(G)$.
Indeed sometimes the slice product between $f$ and $g$ is denoted by $f*g$ (see~\cite{gentilistoppato} or~\cite{gentilistruppa}) and called \textit{regular product},
to stress the fact that this notion of product was born to preserve the regularity. The next proposition precises 
this fact.
 \end{remark}

 \begin{proposition}[\cite{ghiloniperotti}, Proposition 11]\label{starprod}
 If $f,g\in\mathcal{SR}(\Omega_{D})$ then $f\cdot g\in \mathcal{SR}(\Omega_{D})$.
 \end{proposition}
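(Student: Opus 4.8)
The plan is to work entirely at the level of the inducing stem functions, exploiting the fact built into the definition of slice regularity that $f=\mathcal{I}(F)$ is slice regular precisely when $F$ is holomorphic as a map into $\mathbb{H}_{\mathbb{C}}$, that is, when $\partial F/\partial\bar z=0$. Writing $f=\mathcal{I}(F)$, $g=\mathcal{I}(G)$ and recalling that, by definition, $f\cdot g=\mathcal{I}(FG)$ where $FG$ is the pointwise product in $\mathbb{H}_{\mathbb{C}}$, it therefore suffices to show that $FG\colon D\to\mathbb{H}_{\mathbb{C}}$ again satisfies $\partial(FG)/\partial\bar z=0$. In short, everything reduces to a Leibniz-rule computation for a product of two holomorphic functions valued in the complex alternative algebra $\mathbb{H}_{\mathbb{C}}$.

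First I would isolate the two ingredients that make the Leibniz argument go through. The multiplication on $\mathbb{H}_{\mathbb{C}}$ is $\mathbb{R}$-bilinear, so the operators $\partial/\partial\alpha$ and $\partial/\partial\beta$ obey the usual product rule $\partial(FG)/\partial\alpha=(\partial F/\partial\alpha)G+F(\partial G/\partial\alpha)$, and likewise in $\beta$, with the order of the factors strictly preserved (this is essential, since $\mathbb{H}_{\mathbb{C}}$ is noncommutative). Second, the element $\sqrt{-1}=1\otimes i$ lies in the center of $\mathbb{H}_{\mathbb{C}}$, so that $\sqrt{-1}(AB)=(\sqrt{-1}A)B=A(\sqrt{-1}B)$ for all $A,B\in\mathbb{H}_{\mathbb{C}}$; I would check this directly from the multiplication formula $pq=xz-yt+\sqrt{-1}(xt+yz)$.

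With these in hand the computation is immediate. Starting from $\partial(FG)/\partial\bar z=\tfrac{1}{2}\big(\partial(FG)/\partial\alpha+\sqrt{-1}\,\partial(FG)/\partial\beta\big)$, I would expand both derivatives by the product rule, use the centrality of $\sqrt{-1}$ to distribute it across each of the four resulting terms, and regroup to obtain $\partial(FG)/\partial\bar z=(\partial F/\partial\bar z)\,G+F\,(\partial G/\partial\bar z)$. Since $f$ and $g$ are slice regular, both $\partial F/\partial\bar z$ and $\partial G/\partial\bar z$ vanish identically, hence so does $\partial(FG)/\partial\bar z$; moreover $FG$ is of class $\mathcal{C}^1$ because $F$ and $G$ are. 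Therefore $FG$ is holomorphic and $f\cdot g=\mathcal{I}(FG)\in\mathcal{SR}(\Omega_{D})$.

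The only point requiring genuine care, and the one I expect to be the main obstacle, is the bookkeeping in the noncommutative setting: one must keep the factors in the correct left--right order throughout the product rule and ensure that the \emph{only} element moved across factors is the central scalar $\sqrt{-1}$, never a quaternionic coefficient. Once centrality of $\sqrt{-1}$ is established, the regrouping is forced, and no associativity beyond that of the underlying alternative algebra is needed, since multiplication by $\sqrt{-1}$ never leaves the nucleus; consequently the alternative-but-not-associative nature of $\mathbb{H}_{\mathbb{C}}$ causes no difficulty.
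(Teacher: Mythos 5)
Your proof is correct and follows exactly the route the paper itself indicates for Proposition~\ref{starprod}, namely to ``explicit the slice product in terms of stem functions and compute the Cauchy--Riemann equations'': the Leibniz rule together with the centrality of $\sqrt{-1}$ gives $\partial(FG)/\partial\bar z=(\partial F/\partial\bar z)G+F(\partial G/\partial\bar z)=0$. Your care with the order of noncommuting factors and with only moving the central element $\sqrt{-1}$ is exactly the right point to emphasize.
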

In~\cite{ghiloniperotti} it is also pointed out and proved  that the regular product introduced in 
\cite{colgensabstru,gentilistoppato2}
is generalized by this one if the domain $\Omega_D$ does not have real points.
An idea to prove Proposition~\ref{starprod} is simply to explicit the slice product in term of stem
functions and compute the Cauchy-Riemann equations.

 
 The slice product of two slice functions coincides with the pointwise product if the first slice function is \textit{real}
 (see~\cite[Definition 10]{ghiloniperotti}).
\begin{definition}[Real slice function]
A slice function $f=\mathcal{I}(F)$ is called \textit{real} or \textit{slice-preserving} or, even, \textit{quaternionic intrinsic} if the $\mathbb{H}$-valued components $F_{1}$, $F_{2}$ are real valued.
\end{definition}

The next proposition, stated in~\cite[Lemma 6.8]{ghilmorper}, justifies the different names given in the previous definition.
\begin{proposition}
Let $f=\mathcal{I}(F)\in\mathcal{S}(\Omega_{D})$ be a slice function. The following conditions are equivalent.
\begin{itemize}
\item $f$ is real.
\item For all $J\in \mathbb{S}$, $f(D_J)\subset \mathbb{C}_{J}$.
\item For all $x$ in the domain of $f$ it holds $f(x)=(f(x^{c}))^{c}$.
\end{itemize}
\end{proposition}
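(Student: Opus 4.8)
The plan is to reduce all three conditions to the two quaternionic components $F_{1},F_{2}$ of the inducing stem function $F=F_{1}+\sqrt{-1}F_{2}$ and to a single elementary identity, and then to use reality of $F_{1},F_{2}$ (condition $(1)$) as the hub, proving $(1)\Leftrightarrow(2)$ and $(1)\Leftrightarrow(3)$. The starting observation is that for $x=\alpha+J\beta$, writing $z=\alpha+i\beta$ with $\beta\geq 0$, the conjugate $x^{c}=\alpha-J\beta=\alpha+(-J)\beta$ lies in the same slice $\mathbb{C}_{J}=\mathbb{C}_{-J}$, so the very definition of a slice function gives at once
\begin{equation*}
f(x)=F_{1}(z)+JF_{2}(z),\qquad f(x^{c})=F_{1}(z)-JF_{2}(z).
\end{equation*}
Every implication will be read off from these two formulas.

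The implication $(1)\Rightarrow(2)$ is immediate: if $F_{1},F_{2}$ are real valued then $f(x)=F_{1}(z)+JF_{2}(z)\in\mathbb{R}+\mathbb{R}J=\mathbb{C}_{J}$. For $(1)\Leftrightarrow(3)$ I would compute $(f(x^{c}))^{c}$ using the anti-multiplicativity of the conjugation, $(pq)^{c}=q^{c}p^{c}$, together with $J^{c}=-J$, obtaining $(f(x^{c}))^{c}=F_{1}(z)^{c}+F_{2}(z)^{c}J$. Condition $(3)$ then becomes $F_{1}+JF_{2}=F_{1}^{c}+F_{2}^{c}J$, which rearranges to
\begin{equation*}
F_{1}(z)-F_{1}(z)^{c}=F_{2}(z)^{c}J-JF_{2}(z).
\end{equation*}
The left-hand side equals $2\,Im(F_{1}(z))$, a purely imaginary quaternion independent of $J$, while the right-hand side equals the real scalar $2\big(J\cdot Im(F_{2}(z))\big)$ (from the formula $uv+vu=-2(u\cdot v)$ for purely imaginary $u,v$). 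An identity equating a pure vector and a real scalar for every $J\in\mathbb{S}$ forces both to vanish, giving $Im(F_{1})\equiv 0$ and $Im(F_{2})\equiv 0$, i.e.\ condition $(1)$; the converse $(1)\Rightarrow(3)$ is the trivial specialization of the same identity.

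Finally, for $(2)\Rightarrow(1)$ I would apply condition $(2)$ simultaneously at $x$ and at $x^{c}$: both $f(x)=F_{1}+JF_{2}$ and $f(x^{c})=F_{1}-JF_{2}$ then lie in $\mathbb{C}_{J}$, so adding and subtracting yields $F_{1}(z)\in\mathbb{C}_{J}$ and $JF_{2}(z)\in\mathbb{C}_{J}$, whence $F_{2}(z)=-J\,(JF_{2}(z))\in\mathbb{C}_{J}$ since the slice is closed under multiplication by $J$. As this holds for \emph{every} $J\in\mathbb{S}$ and $\bigcap_{J\in\mathbb{S}}\mathbb{C}_{J}=\mathbb{R}$, I conclude $F_{1}(z),F_{2}(z)\in\mathbb{R}$ for all $z$, which is condition $(1)$. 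The only genuinely delicate point — and the sole place where a naive argument would fail — is exactly this quantifier: condition $(2)$ restricted to one slice is far too weak, and reality is extracted only by letting $J$ sweep all of $\mathbb{S}$ and intersecting the slices. Correspondingly, the non-commutativity must be tracked with care, since moving $J$ across $F_{2}$ through $J^{c}=-J$ is precisely what turns the left-multiplicative expression $JF_{2}$ into the right-multiplicative $F_{2}^{c}J$ appearing above.
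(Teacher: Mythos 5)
Your proof is correct. Note that the paper itself does not prove this proposition — it is quoted from \cite[Lemma 6.8]{ghilmorper} — so there is no internal argument to compare against; your self-contained verification is the natural one. All three conditions are correctly reduced to the two formulas $f(x)=F_{1}(z)+JF_{2}(z)$ and $f(x^{c})=F_{1}(z)-JF_{2}(z)$, the conjugation computation $(JF_{2})^{c}=F_{2}^{c}J^{c}=-F_{2}^{c}J$ is handled properly, and you correctly identify the one point where the quantifier over all $J\in\mathbb{S}$ (via $\bigcap_{J}\mathbb{C}_{J}=\mathbb{R}$ and the vanishing of $J\cdot Im(F_{2})$ for all $J$) is genuinely needed.
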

These functions are special since, in a certain sense, they carry the concept of complex
 function in our setting. In fact, if $h(z)=u(z)+iv(z)$ is a complex function defined
 over a certain domain $D\subset\mathbb{C}$ with $D\cap\mathbb{R}\neq\emptyset$, then the function $H:D\rightarrow \mathbb{H}_{\mathbb{C}}$ defined as $H(z)=u(z)+\sqrt{-1}v(z)$ is a stem function,
and $\mathcal{I}(H)$ is a real slice function.

As stated in~\cite{gentilistoppato2}, if
$f$ is a slice regular function defined on $B(0,R)$, the ball of centre zero and radius $R$ for
some $R>0$, then $f$ is real if and only if it can be expressed as a power series of the form
\begin{equation*}
f(x)=\sum_{n\in\mathbb{N}}x^{n}a_{n},
\end{equation*}
with $a_{n}$ real numbers.

By a simple computation, it is possible to prove the following lemma.

\begin{lemma}[\cite{altavilla}, Lemma 2.12]
Let $f=\mathcal{I}(F),g=\mathcal{I}(G)\in\mathcal{S}(\Omega_{D})$, with $f$ real, then the slice function $h:\Omega_{D}\rightarrow \mathbb{H}$, defined by
 $h:=f\cdot g$ is such that
\begin{equation*}
h(x)=f(x)g(x).
\end{equation*}
\end{lemma}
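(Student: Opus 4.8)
The plan is to reduce everything to the level of the inducing stem functions and then exploit the hypothesis that $f$ is real. Write $F=F_1+\sqrt{-1}F_2$ and $G=G_1+\sqrt{-1}G_2$, and fix a point $x=\alpha+I\beta\in\Omega_{D}$, setting $z=\alpha+i\beta$. For the left-hand side I would use the very definition of the slice product together with the explicit expression of $FG$ recorded just after that definition, namely $FG=F_1G_1-F_2G_2+\sqrt{-1}(F_1G_2+F_2G_1)$. Applying the operator $\mathcal{I}$ then gives
\begin{equation*}
(f\cdot g)(x)=\bigl(F_1G_1-F_2G_2\bigr)(z)+I\bigl(F_1G_2+F_2G_1\bigr)(z).
\end{equation*}

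For the right-hand side I would simply expand the pointwise product of the two slice functions,
\begin{equation*}
f(x)g(x)=\bigl(F_1(z)+IF_2(z)\bigr)\bigl(G_1(z)+IG_2(z)\bigr),
\end{equation*}
and distribute. The crucial step, and the only place where the hypothesis enters, is that $f$ being real means that $F_1(z)$ and $F_2(z)$ are real numbers, hence lie in the centre of $\mathbb{H}$ and in particular commute with $I$. This lets me rewrite the mixed term $IF_2(z)\,IG_2(z)$ as $I^2F_2(z)G_2(z)=-F_2(z)G_2(z)$ and the term $F_1(z)IG_2(z)$ as $IF_1(z)G_2(z)$. Collecting the scalar and the $I$-component then reproduces exactly the expression obtained for $(f\cdot g)(x)$ above.

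Comparing the two displays finishes the proof. There is essentially no genuine obstacle here: the statement is a direct computation, and the single subtle point — which is worth flagging explicitly — is that the reality of $f$ is precisely what guarantees the commutation $F_2(z)I=IF_2(z)$ needed to collapse the cross terms into the slice-product form. Without it the naive pointwise product would in general fail to be even a slice function, as the footnote on $qaq$ already illustrates.
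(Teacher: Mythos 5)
Your proof is correct and is exactly the ``simple computation'' the paper alludes to (the lemma is only cited from~\cite{altavilla} without a written proof here): expanding the stem-function product and the pointwise product, and using that the real-valued components $F_1(z),F_2(z)$ are central in $\mathbb{H}$ and hence commute with $I$, is the intended argument. Nothing to add.
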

%

Now, we are going to define an ``inversion'' for slice functions.
The following first two definitions appeared for the first time in~\cite{colgensabstru}, 
 can be found also in~\cite{gentilistoppato} and~\cite{gentilistoppato2}. Later they were
generalized by Ghiloni and Perotti for slice functions in~\cite[Definition 11]{ghiloniperotti}.
The definition of slice reciprocal was firstly introduced in~\cite{colgensabstru, gentilistoppato2, gentilistoppato,stoppatopoles}
and then in~\cite{altavilla} if the domain of definition has empty intersection with the real line. 
Let us denote the zero set of a slice function $f$ by $V(f)$.
\begin{definition}[Slice conjugate, Normal function, Slice reciprocal]
Let $f=\mathcal{I}(F)\in\mathcal{S}(\Omega_{D})$, then also $F(z)^{c}:=F_{1}(z)^{c}+\sqrt{-1}F_{2}(z)^{c}$ is a stem function. We define the following three functions.
\begin{itemize}
\item $f^{c}:=\mathcal{I}(F^{c})\in\mathcal{S}(\Omega_{D})$, called \textit{slice conjugate} of $f$.
\item $N(f):=f^c\cdot f$ \textit{symmetrization} or \textit{normal function} of $f$ (the symmetrization of $f$ is sometimes denoted by $f^s$).
\item If $f=\mathcal{I}(F)$ is slice regular, we call the \textit{slice reciprocal} of $f$ the slice function
\begin{equation*}
f^{-\cdot}:\Omega_{D}\setminus V(N(f))\rightarrow \mathbb{H}
\end{equation*}
defined by	
\begin{equation*}
f^{-\cdot}=\mathcal{I}((F^cF)^{-1}F^c).
\end{equation*}
\end{itemize}
\end{definition}

From the previous definition it follows that, if $x\in\Omega_D\setminus V(N(f))$, then

\begin{equation*} 
f^{-\cdot}(x)=(N(f)(x))^{-1}f^{c}(x).
\end{equation*}

Various results about the previous functions were reviewed in~\cite{altavillaPhD}, including
the fact that if a function $f$ is slice regular then $f^{c}$, $N(f)$ and $f^{-\cdot}$ (where
it is defined) are all regular.
The notion of slice reciprocal was engineered so that, if $f$ is a slice regular function
with empty zero-locus then,
\begin{equation*}
f\cdot f^{-\cdot}=f^{-\cdot}\cdot f=1.
\end{equation*}
For more information about this result (including its proof) see, again,~\cite{altavillaPhD}.
We are going now to review the construction of slice forms
introduced by the author of the present paper in~\cite{altavillaPhD, altavilla}.
We will start with the following general definition.
\begin{definition}\label{slcfrm}
 Let $f=\mathcal{I}(F)\in\mathcal{S}^1(\Omega_D)$. We define the \textit{slice differential} $d_{sl}f$ of $f$ as the following differential form:
 \begin{equation*}
 \begin{array}{rrcc}
     d_{sl}f: &  (\Omega_D\setminus \mathbb{R})  & \rightarrow & \mathbb{H}^*,\\
   & \alpha+I\beta &  \mapsto  & dF_1(\alpha+i\beta)+IdF_2(\alpha+i\beta).
 \end{array}
    \end{equation*}
\end{definition}
\begin{remark}
The one-form $\omega:\mathbb{H}\setminus\mathbb{R}\rightarrow\mathbb{H}^*$ defined as $\omega(\alpha+I\beta)=Id\beta$, 
represents the outer radial direction to the sphere $\mathbb{S}_x=\{\alpha+K\beta\,|\,K\in\mathbb{S}\}$.
Then $\omega(\alpha+I(-\beta))=\omega(\alpha+(-I)\beta)=-\omega(\alpha+I\beta)$.
We can translate this observation in the language of slice forms.
The function $h(x)=Im(x)$ is a slice function induced by $H(z)=\sqrt{-1}Im(z)$.
Then we have $d_{sl}h(\alpha+I\beta)=Id\beta(\alpha+i\beta)$ and, thanks to the previous considerations
$d_{sl}h(\alpha+(-I)(-\beta))=-Id\beta(\alpha-i\beta)=Id\beta(\alpha+i\beta)$.
Summarizing, we have that $d\beta(\bar z)=-d\beta(z)$.
The same does not hold for $d\alpha$ which is a constant one-form over $\mathbb{H}$ and 
for this reason in the next computations
we will omit the variable (i.e.: $d\alpha=d\alpha(z)=d\alpha(\bar z)$).
\end{remark}
In~\cite[Proposition 10]{altavilladiff} it is proved that
Definition~\ref{slcfrm} is well posed, i.e. if $D$ is symmetric with respect to the real axis, then
 \begin{equation*}
  d_{sl}f(\alpha+I\beta)=d_{sl}f(\alpha+(-I)(-\beta)),\quad \forall \alpha+I\beta\in\Omega_D\setminus\mathbb{R}
 \end{equation*}
We can represent, then, the slice differential as follows.
\begin{proposition}[\cite{altavilladiff}, Proposition 11]
 Let $f=\mathcal{I}(F)\in\mathcal{S}^1(\Omega_D)$ with $D\subset\mathbb{C}^+$ (so that $\beta>0$). Then, on $\Omega_D\setminus\mathbb{R}$, the following equality holds true
 \begin{equation*}
     d_{sl}f=\frac{\partial f}{\partial \alpha}d\alpha+\frac{\partial f}{\partial \beta}d\beta.
    \end{equation*}
\end{proposition}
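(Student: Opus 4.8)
The plan is to prove the identity pointwise, by unwinding the definition of $d_{sl}f$ and expanding the differentials $dF_1$, $dF_2$ in the coordinate one-forms $d\alpha$, $d\beta$. Fix a point $\alpha+I\beta\in\Omega_D\setminus\mathbb{R}$; since $D\subset\mathbb{C}^+$ we have $\beta>0$, so the pair $(\alpha,\beta)$ together with $I\in\mathbb{S}$ furnishes honest coordinates on $\Omega_D\setminus\mathbb{R}$ and there is no ambiguity of the type $\alpha+I\beta=\alpha+(-I)(-\beta)$. This is exactly what the hypothesis $D\subset\mathbb{C}^+$ buys us, and it is what makes Definition~\ref{slcfrm} well posed, as recorded just above via \cite[Proposition 10]{altavilladiff}.

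First I would record the slice partial derivatives. Since $f=\mathcal{I}(F)$ means $f(\alpha+I\beta)=F_1(\alpha+i\beta)+IF_2(\alpha+i\beta)$ and $I$ is constant along the slice $\mathbb{C}_I$, differentiating the restriction $f_I$ along the slice gives
\begin{equation*}
\frac{\partial f}{\partial\alpha}=\frac{\partial F_1}{\partial\alpha}+I\frac{\partial F_2}{\partial\alpha},\qquad
\frac{\partial f}{\partial\beta}=\frac{\partial F_1}{\partial\beta}+I\frac{\partial F_2}{\partial\beta},
\end{equation*}
where all the $F_k$-derivatives are understood at $\alpha+i\beta$. These expressions are meaningful because $f\in\mathcal{S}^1(\Omega_D)$, i.e. $F_1,F_2$ are of class $\mathcal{C}^1$.

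Next I would expand the two differentials. Regarding $F_1,F_2$ as $\mathcal{C}^1$ functions of the real variables $(\alpha,\beta)$ on $D$, standard calculus yields
\begin{equation*}
dF_1=\frac{\partial F_1}{\partial\alpha}\,d\alpha+\frac{\partial F_1}{\partial\beta}\,d\beta,\qquad
dF_2=\frac{\partial F_2}{\partial\alpha}\,d\alpha+\frac{\partial F_2}{\partial\beta}\,d\beta.
\end{equation*}
Substituting these into $d_{sl}f=dF_1+I\,dF_2$ and collecting the coefficients of $d\alpha$ and of $d\beta$ reproduces exactly the two combinations found above, giving the claimed equality $d_{sl}f=\frac{\partial f}{\partial\alpha}d\alpha+\frac{\partial f}{\partial\beta}d\beta$.

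I expect the only genuinely delicate point to be the identification of the coordinate one-forms $d\alpha,d\beta$ appearing in the expansion of $dF_1,dF_2$ on the complex domain $D$ with the one-forms $d\alpha,d\beta$ on $\Omega_D\setminus\mathbb{R}$ that appear in the statement. For a fixed slice $\mathbb{C}_I$ the correspondence $\alpha+i\beta\leftrightarrow\alpha+I\beta$ identifies $D$ with $\Omega_D\cap\mathbb{C}_I$ and matches the respective $d\alpha,d\beta$; since $I$ is locally constant in the slice directions, factoring $I$ out of $dF_2$ is legitimate. The sign subtleties of $d\beta$ noted in the preceding remark (namely $d\beta(\bar z)=-d\beta(z)$) do not interfere precisely because the hypothesis $\beta>0$ confines us to the upper half-plane, so the whole computation takes place on a single well-defined sheet.
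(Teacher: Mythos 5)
Your proof is correct and is the direct unwinding of the definition that the paper (which only cites \cite{altavilladiff} for this statement) relies on: expand $dF_1$ and $dF_2$ in $d\alpha,d\beta$, factor out the locally constant $I$, and recognize the coefficients as the slice partial derivatives of $f_I$. You also correctly identify the only point needing care, namely that $D\subset\mathbb{C}^+$ fixes the sheet so that $(\alpha,\beta,I)$ are unambiguous coordinates and the sign issue $d\beta(\bar z)=-d\beta(z)$ never arises.
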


It is clear from the definition that, if we choose the usual coordinate system, where $x=\alpha+I\beta$ with $\beta>0$, 
then $d_{sl}x=d\alpha +Id\beta$ and $d_{sl}x^c=d\alpha-Id\beta$.
We can now state the following theorem.

\begin{theorem}[\cite{altavilladiff}, Theorem 7]
 Let $f\in\mathcal{S}^1(\Omega_D)$. Then the following equality holds:
 \begin{equation*}
  d_{sl}x\frac{\partial f}{\partial x}(x)+d_{sl}x^c\frac{\partial f}{\partial x^c}(x)=d_{sl}f(x),\quad\forall x\in\Omega_D\setminus\mathbb{R}.
 \end{equation*}
\end{theorem}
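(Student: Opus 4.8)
The plan is to verify the identity by a direct computation, expanding every object in terms of the components $F_1,F_2$ of the inducing stem function $F=F_1+\sqrt{-1}F_2$ and comparing the coefficients of $d\alpha$ and $d\beta$ on the two sides. This is the slice counterpart of the classical complex identity $dz\,\partial_z f + d\bar z\,\partial_{\bar z}f = df$, so the substance of the argument is not the strategy but the careful handling of the noncommutative quaternionic multiplication. Throughout I would fix a point $x=\alpha+I\beta$ with $\beta>0$; the case $\beta<0$ reduces to this one by the well-posedness of $d_{sl}$ recalled before Definition~\ref{slcfrm}, upon replacing $(I,\beta)$ with $(-I,-\beta)$.

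First I would write out the two slice derivatives explicitly. Applying $\mathcal{I}$ to $\partial F/\partial z=\tfrac12(\partial_\alpha F-\sqrt{-1}\,\partial_\beta F)$ and using $\sqrt{-1}^{\,2}=-1$, one finds $\frac{\partial f}{\partial x}(x)=P+IQ$ with $P=\tfrac12(\partial_\alpha F_1+\partial_\beta F_2)$ and $Q=\tfrac12(\partial_\alpha F_2-\partial_\beta F_1)$; analogously $\frac{\partial f}{\partial x^c}(x)=P'+IQ'$ with $P'=\tfrac12(\partial_\alpha F_1-\partial_\beta F_2)$ and $Q'=\tfrac12(\partial_\alpha F_2+\partial_\beta F_1)$. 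Here $I$ is the imaginary unit attached to the chosen point $x$, placed on the left exactly as in the definition of $\mathcal{I}$, while $P,Q,P',Q'$ are genuine quaternions that need not commute with $I$.

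Next, using $d_{sl}x=d\alpha+Id\beta$ and $d_{sl}x^c=d\alpha-Id\beta$, and reading the products $d_{sl}x\,\frac{\partial f}{\partial x}$ and $d_{sl}x^c\,\frac{\partial f}{\partial x^c}$ as right multiplication of the form coefficients by the quaternionic values of the derivatives, I would expand the left-hand side and collect terms. The coefficient of $d\alpha$ is $\frac{\partial f}{\partial x}+\frac{\partial f}{\partial x^c}=(P+P')+I(Q+Q')=\partial_\alpha F_1+I\partial_\alpha F_2$, which is precisely the $d\alpha$-coefficient $\frac{\partial f}{\partial\alpha}$ of $d_{sl}f$ in the representation $d_{sl}f=\frac{\partial f}{\partial\alpha}d\alpha+\frac{\partial f}{\partial\beta}d\beta$.

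The only genuinely delicate step is the coefficient of $d\beta$, namely $I\frac{\partial f}{\partial x}-I\frac{\partial f}{\partial x^c}=I\bigl(\frac{\partial f}{\partial x}-\frac{\partial f}{\partial x^c}\bigr)$. Computing the difference gives $\frac{\partial f}{\partial x}-\frac{\partial f}{\partial x^c}=(P-P')+I(Q-Q')=\partial_\beta F_2-I\partial_\beta F_1$, and left-multiplying by $I$ while invoking $I^2=-1$ yields $I\partial_\beta F_2+\partial_\beta F_1=\frac{\partial f}{\partial\beta}$, again matching $d_{sl}f$. The crucial cancellation is exactly the appearance of $-I^2=1$, and this is where the sign and the noncommutativity must be tracked with care; it is the main (and essentially only) obstacle. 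Since both coefficients agree on each slice $\mathbb{C}_I$ with $\beta>0$ and both sides are well defined on $\Omega_D\setminus\mathbb{R}$, the stated equality follows there.
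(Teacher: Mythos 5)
Your computation is correct: the components $P,Q,P',Q'$ of the two slice derivatives, the collection of the $d\alpha$ and $d\beta$ coefficients, and the key cancellation $-I^2=1$ in the $d\beta$ term all check out against the definitions of $d_{sl}$, $\partial/\partial x$ and $\partial/\partial x^c$ given in the paper. Note that the paper itself states this theorem as a citation from \cite{altavilladiff} without reproducing a proof, so there is nothing to compare against here; your direct stem-function expansion is the natural and expected argument for this identity.
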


We have then the obvious corollary:
\begin{corollary}
 Let $f\in\mathcal{SR}(\Omega_D)$. Then the following equality holds:
 \begin{equation*}
  d_{sl}x\frac{\partial f}{\partial x}(x)=d_{sl}f(x),\quad\forall x\in\Omega_D\setminus\mathbb{R}.
 \end{equation*}
\end{corollary}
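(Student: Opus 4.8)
The plan is to deduce this immediately from the preceding Theorem 7, whose sole hypothesis $f\in\mathcal{S}^1(\Omega_D)$ is satisfied by every slice regular function, since by definition $\mathcal{SR}(\Omega_D)\subset\mathcal{S}^1(\Omega_D)$. First I would recall the defining property of slice regularity: a slice function $f=\mathcal{I}(F)\in\mathcal{S}^1(\Omega_D)$ is slice regular precisely when the slice derivative $\partial f/\partial x^c$ vanishes identically on $\Omega_D$. This is the only feature of $f$ that the corollary exploits.

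Then I would write down the identity of Theorem 7 specialized to our $f$, namely
\[
 d_{sl}x\,\frac{\partial f}{\partial x}(x)+d_{sl}x^c\,\frac{\partial f}{\partial x^c}(x)=d_{sl}f(x),
\]
valid for all $x\in\Omega_D\setminus\mathbb{R}$. Because $f\in\mathcal{SR}(\Omega_D)$ forces $\partial f/\partial x^c\equiv 0$, the middle summand drops out, leaving $d_{sl}x\,\frac{\partial f}{\partial x}(x)=d_{sl}f(x)$, which is exactly the claimed equality.

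There is no genuine obstacle here: the entire content is carried by Theorem 7 together with the definition of $\mathcal{SR}(\Omega_D)$. The only point meriting a line of verification is that the one-form $d_{sl}x^c$ multiplying the vanishing derivative is finite, so that the product $d_{sl}x^c\,\frac{\partial f}{\partial x^c}$ is genuinely the zero form rather than an indeterminate expression; this is immediate from the explicit formula $d_{sl}x^c=d\alpha-I\,d\beta$ recorded just before Theorem 7. Hence the corollary follows at once.
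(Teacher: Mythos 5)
Your proof is correct and is exactly the intended argument: the paper states this as ``the obvious corollary'' of the preceding theorem without further proof, and the deduction is precisely substituting $\partial f/\partial x^c\equiv 0$ (the definition of slice regularity) into the identity $d_{sl}x\,\frac{\partial f}{\partial x}+d_{sl}x^c\,\frac{\partial f}{\partial x^c}=d_{sl}f$. Your extra remark that $d_{sl}x^c=d\alpha-I\,d\beta$ is a well-defined (finite) one-form is a reasonable sanity check but not needed.
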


Some important classes of slice regular functions are now introduced.

\begin{definition}[Slice constant function]
Let $\Omega_D$ be a connected circular domain and
 let $f=\mathcal{I}(F)\in\mathcal{S}(\Omega_{D})$. $f$ is called \textit{slice constant} if the stem function $F$ is locally constant on $D$.
\end{definition}


\begin{proposition}[\cite{altavilla}, Proposition 3.3 and Theorem 3.4]\label{slccnst}
Let $f\in\mathcal{S}(\Omega_{D})$ be a slice function. If $f$ is slice constant then it is slice regular. 
Moreover $f$ is slice constant if and only if
\begin{equation*}
\frac{\partial f}{\partial x}\equiv 0.
\end{equation*}
\end{proposition}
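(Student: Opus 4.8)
The plan is to push everything through the dictionary between slice functions and their inducing stem functions. Writing $f=\mathcal{I}(F)$ with $F=F_1+\sqrt{-1}F_2:D\to\mathbb{H}_{\mathbb{C}}$, the three notions in play all become pointwise conditions on the Wirtinger derivatives $\partial F/\partial z=\tfrac12(\partial_\alpha F-\sqrt{-1}\,\partial_\beta F)$ and $\partial F/\partial\bar z=\tfrac12(\partial_\alpha F+\sqrt{-1}\,\partial_\beta F)$, where $\sqrt{-1}$ is the complex structure of $\mathbb{H}_{\mathbb{C}}$. The tool that makes this translation reversible is the uniqueness of the inducing stem function: the induction map $\mathcal{I}$ is injective, and since $\partial F/\partial z$ and $\partial F/\partial\bar z$ are again stem functions, the slice derivatives $\partial f/\partial x=\mathcal{I}(\partial F/\partial z)$ and $\partial f/\partial x^c=\mathcal{I}(\partial F/\partial\bar z)$ vanish if and only if the corresponding operator annihilates $F$. (I tacitly assume $f\in\mathcal{S}^1(\Omega_D)$ so that the slice derivatives exist.)

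First I would dispatch the easy implications. If $f$ is slice constant then $F$ is locally constant on $D$, so $\partial_\alpha F\equiv 0$ and $\partial_\beta F\equiv 0$; substituting into the two Wirtinger operators gives at once $\partial F/\partial\bar z=0$ and $\partial F/\partial z=0$. The first yields $\partial f/\partial x^c=\mathcal{I}(0)=0$, i.e. $f\in\mathcal{SR}(\Omega_D)$, proving that slice constancy forces regularity; the second yields $\partial f/\partial x=0$, which is already the forward half of the stated equivalence and holds for \emph{every} slice function, with no regularity hypothesis needed.

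For the converse I would start from $\partial f/\partial x\equiv 0$, which by injectivity of $\mathcal{I}$ means $\partial F/\partial z=0$. Here lies the one delicate point, and the step I would flag as the real content: $\partial F/\partial z=0$ alone only says that $F$ is anti-holomorphic, and this does \emph{not} force local constancy — the clean warning example is $f(x)=x^c$, induced by $F=\alpha-\sqrt{-1}\beta$, which satisfies $\partial f/\partial x=0$ but is plainly not slice constant (and indeed is not regular, as $\partial F/\partial\bar z=1\neq0$). The missing ingredient is exactly regularity, and it is available because the first part shows a slice constant function is automatically regular, so the equivalence is naturally read inside $\mathcal{SR}(\Omega_D)$ and I may assume $\partial F/\partial\bar z=0$. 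Combining the two vanishing relations by addition and subtraction gives $\partial_\alpha F=\partial F/\partial z+\partial F/\partial\bar z=0$ and $\sqrt{-1}\,\partial_\beta F=\partial F/\partial\bar z-\partial F/\partial z=0$, hence $\partial_\beta F=0$ as well. All first-order partials of $F$ thus vanish on $D$, so $F$ is locally constant (which is precisely the required conclusion, consistent with $D$ possibly being disconnected), and therefore $f$ is slice constant. The whole argument beyond this observation is just the real/imaginary splitting of the Wirtinger operators, so the conceptual weight sits entirely in recognizing that it is the pair $\partial f/\partial x=\partial f/\partial x^c=0$, not either equation separately, that pins down constancy.
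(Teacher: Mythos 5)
The paper does not actually prove this proposition: it is quoted from \cite{altavilla} (Proposition 3.3 and Theorem 3.4), so there is no in-text argument to compare yours against. Your proof is correct and is the natural one: you translate slice constancy, slice regularity and the vanishing of $\partial f/\partial x$ into conditions on the inducing stem function via the injectivity of $\mathcal{I}$ (using that $\partial F/\partial z$ and $\partial F/\partial\bar z$ are again stem functions), and then split the Wirtinger operators into $\partial_\alpha F$ and $\partial_\beta F$. The substantive point you flag is genuine: as transcribed in this paper the ``moreover'' clause is asserted for an arbitrary slice function, and in that generality it fails --- your example $f(x)=x^c$, induced by $F=\alpha-\sqrt{-1}\beta$, has $\partial f/\partial x\equiv 0$ without being slice constant (or regular). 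In the cited source the equivalence is stated for slice regular $f$, which is also how the present paper uses it (e.g.\ in Proposition~\ref{propslcnst} and in the definition of slice affine functions); with that hypothesis restored, adding and subtracting $\partial F/\partial z=0$ and $\partial F/\partial\bar z=0$ kills both first-order partials and forces $F$ to be locally constant, which is exactly the right conclusion given that $D$ may be disconnected even when $\Omega_D$ is connected. So the proposal is sound, and it correctly identifies the regularity hypothesis as the load-bearing ingredient of the converse.
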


\begin{remark}
The previous proposition tells that if we have a slice constant function $f\in\mathcal{SR}(\Omega_{D})$ over a connected
circular domain $\Omega_D$, then, given $J\in\mathbb{S}$, if $x\in D_{J}^+\setminus \mathbb{R}$
\begin{equation*}
f(x)=a+ Jb=a+\frac{Im(x)}{||Im(x)||}b,\quad a,b\in\mathbb{H}.
\end{equation*}
\end{remark}

\begin{proposition}\label{propslcnst}
Let $\Omega_D$ be a connected circular domain.
Let $g:\Omega_D\rightarrow \mathbb{H}$ be a slice function. $g$ is slice constant if and only if
given any fixed $J\in\mathbb{S}$, $g|_{\Omega_D\setminus\mathbb{R}}$ is a
 linear combination, with right quaternionic coefficients, of the two functions 
 $g_+,g_{-}:\mathbb{H}\setminus\mathbb{R}\rightarrow \mathbb{H}$ defined by $g_\pm(\alpha+I\beta )=1\pm IJ$.
\end{proposition}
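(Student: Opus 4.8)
The plan is to turn the statement into a change-of-basis computation inside the right $\mathbb{H}$-module of slice constant functions, exploiting the normal form for such functions provided by Proposition~\ref{slccnst} and the Remark following it.

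I would begin by rewriting a right-$\mathbb{H}$ linear combination of $g_+$ and $g_-$ in coordinates. Fixing $J\in\mathbb{S}$ and a point $x=\alpha+I\beta$ with $\beta>0$, one computes
\[
(g_+c_++g_-c_-)(\alpha+I\beta)=(1+IJ)c_++(1-IJ)c_-=(c_++c_-)+I\,(J(c_+-c_-)).
\]
Writing $a=c_++c_-$ and $b=J(c_+-c_-)$, this is exactly $a+Ib$; in particular $g_+,g_-$ are themselves slice constant. Since $J^{-1}=-J$, the assignment $(c_+,c_-)\mapsto(a,b)$ is a right-$\mathbb{H}$-linear bijection of $\mathbb{H}^2$ onto itself, with inverse $c_\pm=\tfrac12(a\mp Jb)$. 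Thus the right-$\mathbb{H}$ span of $\{g_+,g_-\}$ coincides with the set of functions of the form $x=\alpha+I\beta\mapsto a+Ib$, with $a,b\in\mathbb{H}$.

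With this identity in hand both implications are short. For the ``only if'' direction I would invoke the normal form: by Proposition~\ref{slccnst} together with its subsequent Remark, a slice constant $g$ on the connected circular domain $\Omega_D$ satisfies $g(\alpha+I\beta)=a+Ib$ on $\Omega_D\setminus\mathbb{R}$ for suitable constants $a,b\in\mathbb{H}$; setting $c_\pm=\tfrac12(a\mp Jb)$ and using the identity above yields $g|_{\Omega_D\setminus\mathbb{R}}=g_+c_++g_-c_-$. For the converse, such a combination is the slice function $x\mapsto a+Ib$ induced on the upper half-plane $D^+$ by the constant stem value $a+\sqrt{-1}b$; hence $\partial F/\partial z=\tfrac12(\partial_\alpha F-\sqrt{-1}\,\partial_\beta F)=0$, so $\partial g/\partial x\equiv0$ and $g$ is slice constant by Proposition~\ref{slccnst}.

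The computation is elementary, and the only delicate point is the matching of the two descriptions \emph{as functions on $\Omega_D\setminus\mathbb{R}$}, not merely on the single slice $\mathbb{C}_J$: one must know that $a+Ib=a'+Ib'$ for every $I\in\mathbb{S}$ forces $a=a'$ and $b=b'$. This follows by evaluating at $I$ and at $-I$ and taking sum and difference, and it is exactly here that invertibility of the fixed unit $J$ guarantees that $\{g_+,g_-\}$ is a genuine right-$\mathbb{H}$ basis of the rank-two module of slice constant functions, rather than spanning only a proper submodule. Finally, the connectedness of $\Omega_D$ is what lets me conclude that the stem components are globally determined by the single pair $(a,b)$, so that the representation is valid on all of $\Omega_D\setminus\mathbb{R}$ at once.
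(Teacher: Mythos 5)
Your proposal is correct and follows essentially the same route as the paper: the converse direction reduces to Proposition~\ref{slccnst} (vanishing slice derivative of the constant stem $a+\sqrt{-1}b$), and the forward direction produces exactly the paper's coefficients $c_{\pm}=\tfrac12(a\mp Jb)$, which the paper obtains by plugging the normal form $g(\alpha+I\beta)=g_1+Ig_2$ into the Representation Formula~\eqref{repreform}. Your extra remarks on the uniqueness of the pair $(a,b)$ and the role of connectedness are sound but not needed beyond what the paper already uses.
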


\begin{proof}
 Thanks to Theorem~\ref{slccnst}, any linear combination of the two functions $g_+,g_{-}$ is slice constant since their slice derivative is everywhere zero.
%
Vice versa, let $g=\mathcal{I}(g_1+\sqrt{-1}g_2)$ be a slice constant function, with $g_{1},g_{2}\in\mathbb{H}$. Thanks to the Representation Formula~\eqref{repreform}, for any $J\in\mathbb{S}$ we have $g(\alpha+I\beta)=[(1-IJ)(g_1+Jg_2)+(1+IJ)(g_1-Jg_2)]/2$ and
 the last is equal to $g(\alpha+I\beta)=[g_{-}(g_1+Jg_2)+g_{+}(g_1-Jg_2)]/2$.
 
\end{proof}

Now we will introduce the set of slice regular function that are affine slice by slice. This notion will be useful in a next result.
\begin{definition}[Slice affine functions]
 Let $f:\Omega_D\rightarrow\mathbb{H}$ be a slice regular function. $f$ is called \textit{slice affine} if its slice derivative is a slice constant function.
\end{definition}

\begin{proposition}
Let $f:\Omega_D\rightarrow \mathbb{H}$ be a slice function. $f$ is slice affine if and only if
 given any fixed $J\in\mathbb{S}$, $f|_{\Omega_D\setminus\mathbb{R}}$ is a
 linear combination, with right quaternionic coefficient, of the four functions $f_+,f_{-},g_+,g_{-}:\mathbb{H}\setminus\mathbb{R}\rightarrow \mathbb{H}$,
 where $g_+,g_{-}$ are the one defined in Proposition~\ref{propslcnst} and $f_\pm(\alpha+I\beta)=(\alpha+I\beta)g_\pm(\alpha+I\beta)$.
\end{proposition}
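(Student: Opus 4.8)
The plan is to reduce everything to the slice derivative of the two ``affine'' generators $f_\pm$ and to exploit the right $\mathbb{H}$-linearity of $\partial/\partial x$. First I observe that $f_\pm=\mathrm{id}\cdot g_\pm$, where $\mathrm{id}(x)=x$ is the identity function, induced by the stem function $Z(z)=z$; since $\mathrm{id}$ is a real slice function, its slice product with $g_\pm$ coincides with the pointwise product, so $f_\pm(\alpha+I\beta)=(\alpha+I\beta)g_\pm(\alpha+I\beta)$ as required. The central computation is then
\begin{equation*}
\frac{\partial f_\pm}{\partial x}=\frac{\partial\,\mathrm{id}}{\partial x}\cdot g_\pm+\mathrm{id}\cdot\frac{\partial g_\pm}{\partial x}=1\cdot g_\pm+\mathrm{id}\cdot 0=g_\pm,
\end{equation*}
using the Leibniz rule for $\partial/\partial x$ with respect to the slice product, the identity $\partial\,\mathrm{id}/\partial x=\mathcal{I}(\partial z/\partial z)=1$, and the fact that $g_\pm$ is slice constant, so $\partial g_\pm/\partial x\equiv 0$ by Proposition~\ref{slccnst}. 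The same equality can be checked directly by writing $g_\pm=\mathcal{I}(1\pm\sqrt{-1}J)$, forming the product $Z\cdot(1\pm\sqrt{-1}J)$ in $\mathbb{H}_\mathbb{C}$, and differentiating.

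For the ``if'' direction, suppose $f=f_+a_++f_-a_-+g_+b_++g_-b_-$ with $a_\pm,b_\pm\in\mathbb{H}$. Each generator is slice regular ($g_\pm$ is slice constant hence slice regular, and $f_\pm=\mathrm{id}\cdot g_\pm$ is slice regular by Proposition~\ref{starprod}), so $f\in\mathcal{SR}(\Omega_D)$ because $\mathcal{SR}(\Omega_D)$ is a right $\mathbb{H}$-module. Since $\partial/\partial x$ is additive and right $\mathbb{H}$-linear, the central computation gives
\begin{equation*}
\frac{\partial f}{\partial x}=g_+a_++g_-a_-,
\end{equation*}
which is a right-linear combination of $g_+,g_-$, hence slice constant by Proposition~\ref{propslcnst}. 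Therefore $f$ is slice affine.

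For the ``only if'' direction, assume $f$ is slice affine, so $\partial f/\partial x$ is slice constant. By Proposition~\ref{propslcnst} there exist $a_+,a_-\in\mathbb{H}$ with $\partial f/\partial x=g_+a_++g_-a_-$. Set $\tilde f:=f_+a_++f_-a_-$; by the central computation and right-linearity, $\partial\tilde f/\partial x=g_+a_++g_-a_-=\partial f/\partial x$, so $\partial(f-\tilde f)/\partial x\equiv 0$ and $f-\tilde f$ is slice constant by Proposition~\ref{slccnst}. Applying Proposition~\ref{propslcnst} once more yields $b_+,b_-\in\mathbb{H}$ with $f-\tilde f=g_+b_++g_-b_-$, and hence $f=f_+a_++f_-a_-+g_+b_++g_-b_-$, the desired decomposition.

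The only delicate point is the key identity $\partial f_\pm/\partial x=g_\pm$, which rests on the Leibniz rule for the slice derivative relative to the slice product and on the right $\mathbb{H}$-linearity of $\partial/\partial x$; both follow from the $\mathbb{C}$-linearity of $\partial/\partial z$ and the bilinearity of the $\mathbb{H}_\mathbb{C}$-product, but since they are the engine of both implications I would state them explicitly (or, to be self-contained, replace them by the direct stem-function computation sketched above). Everything else is a formal bookkeeping of right coefficients, for which Propositions~\ref{slccnst} and~\ref{propslcnst} supply exactly the needed characterizations of slice constant functions.
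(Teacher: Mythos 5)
Your proof is correct, but it takes a genuinely different route from the paper's. The paper integrates the condition directly: writing $d_{sl}f=d_{sl}x\,\frac{\partial f}{\partial x}=d_{sl}x\,g(x)$ with $g=\mathcal{I}(g_1+\sqrt{-1}g_2)$ slice constant, it equates the components of the two slice forms to obtain $F_1=g_1\alpha-g_2\beta+q_1$ and $F_2=g_2\alpha+g_1\beta+q_2$ explicitly, and then reassembles $f(\alpha+I\beta)=(\alpha+I\beta)(g_1+Ig_2)+q_1+Iq_2$ via the Representation Formula, exactly as in the proof of Proposition~\ref{propslcnst}. You instead isolate the single identity $\partial f_\pm/\partial x=g_\pm$ and use it as an antiderivative: subtracting $\tilde f=f_+a_++f_-a_-$ reduces the affine case to the constant case, which Proposition~\ref{propslcnst} already settles. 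Your argument is more modular --- it would extend verbatim to the ``slice polynomial'' functions mentioned in the remark following this proposition, with generators $x^ng_\pm$ --- but it leans on the Leibniz rule for $\partial/\partial x$ relative to the slice product and on its right $\mathbb{H}$-linearity, neither of which the paper states explicitly. You rightly flag this, and both facts do hold (the $\mathbb{H}_\mathbb{C}$-product is $\mathbb{R}$-bilinear, $\sqrt{-1}$ is central, and $\partial/\partial\alpha$, $\partial/\partial\beta$ obey Leibniz); your fallback computation of $\partial\bigl(Z\cdot(1\pm\sqrt{-1}J)\bigr)/\partial z=1\pm\sqrt{-1}J$ checks out, so the gap is purely expository. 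The paper's route avoids these auxiliary facts at the cost of an explicit integration at the stem-function level. One small point both proofs share: Propositions~\ref{slccnst} and~\ref{propslcnst} are stated for connected circular domains, so that hypothesis is implicitly carried over here.
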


\begin{proof}
 If $f$ is a linear combination of $f_+,f_{-}$ and $g_+,g_{-}$ then it is obviously a slice affine function. 
 Vice versa, since $\partial f/\partial x$ is a slice constant function, then, in the language of slice forms
 \begin{equation*}
  d_{sl}f=d_{sl}x\frac{\partial f}{\partial x}=d_{sl}xg(x),
 \end{equation*}
 with $g=\mathcal{I}(g_1+\sqrt{-1}g_2)$ a slice constant function.
 The previous equality, using the definition of slice form, is equivalent to the following one
  \begin{equation*}
  \frac{\partial F_1}{\partial \alpha}d\alpha+\frac{\partial F_1}{\partial\beta}d\beta+
  I\left(\frac{\partial F_2}{\partial\alpha}d\alpha+\frac{\partial F_2}{\partial \beta}d\beta\right)=g_1d\alpha-g_2d\beta+I(g_2d\alpha+g_1d\beta),
 \end{equation*}
 that implies $F_1=g_1\alpha-g_2\beta+q_1$ and $F_2=g_2\alpha+g_1\beta+q_2$, for some couple $q_1,q_2$ of quaternions.
 But then, by applying the Representation Formula~\eqref{repreform}, and using the same argument as in the proof of Proposition~\ref{propslcnst} we obtain,
 \begin{equation*}
  \begin{array}{rcl}
   f(\alpha+I\beta) & = & =g_1\alpha-g_2\beta+I(g_2\alpha+g_1\beta)+q_1+Iq_2\\
   & = & \alpha(g_1+Ig_2)+I\beta(g_1+Ig_2)+q_1+Iq_2\\
   & = & (\alpha+I\beta)(g_1+Ig_2)+q_1+Iq_2\\
   & = & (\alpha+I\beta)[(1-IJ)(g_1+Jg_2)+(1+IJ)(g_1-Jg_2)]/2+\\
   & & +[(1-IJ)(q_1+Jq_2)+(1+IJ)(q_1-Jq_2)]/2.
  \end{array}
 \end{equation*}
\end{proof}

\begin{remark}\label{extend}
 The set of slice constant functions contains the set of constant functions and the condition for a slice constant function $g=g_+q_++g_-q_-$ to 
 be extended to $\mathbb{R}$ is 
 that $q_+=q_-$ (i.e.: $g$ is a constant function). Analogously, a slice affine function $f=f_+q_{1+}+f_-q_{1-}+g_+q_{0+}+g_-q_{0-}$ extends to the real 
 line if and only if $q_{1+}=q_{1-}$ and $q_{0+}=q_{0-}$ (i.e.: $f=xa+b$ is a $\mathbb{H}$-affine function). For 
 slice constant functions the assertion is trivial while for slice affine functions it requires a simple consideration regarding the limit of a slice affine function for $\beta$ that 
 approach $0$ when $\beta$ is lower or greater than zero. In formula, the previous condition is the following one:
 \begin{equation*}
  \lim_{\underset{\alpha+I\beta\in\mathbb{C}_I^+}{\beta\to 0}} f(\alpha+I\beta)=  \lim_{\underset{\alpha+I\beta\in\mathbb{C}_I^-}{\beta \to 0}} f(\alpha+I\beta).
 \end{equation*}

\end{remark}

\begin{remark}
One can define, in general, the class of ``slice polynomial'' functions as the set of slice 
regular functions such that the $n^{th}$ slice derivative vanishes for some $n$.
This can be actually a useful notion in view of some researches regarding the number of
counterimages of a slice regular function defined over a domain without real points.
Anyway this  theme is not explored in this paper.
\end{remark}

%

In the next part of this section we will recall some theorems regarding the nature of
the real differential of a slice regular function. These results can be found in~\cite{stoppato, gensalsto} and 
their generalization in~\cite{altavilladiff}.
Firstly we will expose a representation of the differential. As we said before, given a slice regular function,
its spherical derivative and its slice derivative control the variation of the functions along 
spheres $\mathbb{S}_{x}$ and slices $\mathbb{C}_{I}$, respectively. This is in fact the content
of the following theorem.
\begin{theorem}[\cite{altavilladiff,stoppato}]\label{reprediff}
 Let $f\in\mathcal{SR}(\Omega_D)$ and let $(df)_x$ denote the real differential of $f$ at $x=\alpha+I\beta\in\Omega_D\setminus\mathbb{R}$. If we identify $T_x\Omega_{D}$ with
 $\mathbb{H}=\mathbb{C}_I\oplus \mathbb{C}_I^\bot$, then for all $v_1\in\mathbb{C}_I$ and $v_2\in\mathbb{C}_I^\bot$,
 \begin{equation*}
  (df)_x(v_1+v_2)=v_1\frac{\partial f}{\partial x}(x)+v_2\partial_sf(x).
 \end{equation*}
  If $\alpha\in\Omega_D\cap\mathbb{R}$ then, the previous formula becomes the following one
  \begin{equation*}
  (df)_\alpha(v)=v\frac{\partial f}{\partial x}(\alpha)=v\partial_sf(\alpha).
 \end{equation*}
\end{theorem}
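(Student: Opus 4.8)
The plan is to exploit the orthogonal splitting $\mathbb{H}=\mathbb{C}_I\oplus\mathbb{C}_I^\perp$ in the statement, recognising it as the decomposition of $T_x\Omega_D$ into the \emph{slice directions} (tangent to the slice $\mathbb{C}_I$ through $x$) and the \emph{spherical directions} (tangent to the sphere $\mathbb{S}_x$): indeed a vector $v_2\in\mathbb{C}_I^\perp$ is purely imaginary and orthogonal to $I$, so to first order the curve $t\mapsto x+tv_2$ keeps both the real part $\alpha$ and the radius $\beta$ fixed, hence moves along $\mathbb{S}_x$. Since the real differential $(df)_x$ is $\mathbb{R}$-linear, it suffices to compute it on each summand and add the results, with the slice derivative governing the first and the spherical derivative the second.

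On $\mathbb{C}_I$ I would restrict to the slice $\Omega_I=\Omega_D\cap\mathbb{C}_I$, where by Remark~\ref{charareg} the restriction $f_I$ satisfies the Cauchy--Riemann equation $\partial f_I/\partial\alpha=-I\,\partial f_I/\partial\beta$. Feeding this into the definition of the slice derivative yields the two identities $\partial f/\partial x=\partial f_I/\partial\alpha$ and $\partial f_I/\partial\beta=I\,\partial f/\partial x$. Writing $v_1=a+Ib$ with $a,b\in\mathbb{R}$, the directional derivative along $v_1$ is $a\,\partial f_I/\partial\alpha+b\,\partial f_I/\partial\beta=(a+Ib)\,\partial f/\partial x(x)=v_1\,\partial f/\partial x(x)$, which is the first term. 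Throughout one must keep the tangent vector to the \emph{left} of the derivative, as $\mathbb{H}$ is noncommutative.

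On $\mathbb{C}_I^\perp$ I would use that this space equals $\beta\,T_I\mathbb{S}$, the tangent space to the sphere of imaginary units at $I$ scaled by $\beta$, so that every $v_2\in\mathbb{C}_I^\perp$ can be written as $v_2=I'(0)\beta$ for a suitable curve $I(t)\subset\mathbb{S}$ with $I(0)=I$. The slice representation $f(\alpha+J\beta)=F_1(\alpha+i\beta)+JF_2(\alpha+i\beta)$ then gives, by the chain rule, $(df)_x(v_2)=\tfrac{d}{dt}\big|_{0}f(\alpha+I(t)\beta)=I'(0)\,F_2(\alpha+i\beta)$, and since $\partial_s f$ is induced by $F_2(z)/\mathrm{Im}(z)$ we have $F_2(\alpha+i\beta)=\beta\,\partial_s f(x)$, whence $(df)_x(v_2)=I'(0)\beta\,\partial_s f(x)=v_2\,\partial_s f(x)$. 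Adding the two contributions yields the claimed formula for $x\notin\mathbb{R}$.

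Finally, at a real point $\alpha$ the sphere $\mathbb{S}_\alpha$ collapses and the two summands coincide; here I would appeal to the cited formula relating the slice and spherical derivatives, which at $\mathrm{Im}(x)=0$ reduces to $\partial f/\partial x(\alpha)=\partial_s f(\alpha)$, together with the continuous extension of $\partial_s f$ to the reals, to conclude $(df)_\alpha(v)=v\,\partial f/\partial x(\alpha)=v\,\partial_s f(\alpha)$ for every $v$. The step I expect to demand the most care is the identification of $\mathbb{C}_I^\perp$ with $\beta\,T_I\mathbb{S}$ together with the verification that $v_2$ really sweeps out the whole of $\mathbb{C}_I^\perp$ as $I'(0)$ ranges over $T_I\mathbb{S}$ — that is, that varying the imaginary unit captures exactly the spherical directions — since the interplay of this geometric identification with the left-versus-right placement of quaternionic factors is where the bookkeeping is easiest to get wrong.
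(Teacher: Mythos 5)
The paper does not prove Theorem~\ref{reprediff}: it is quoted from \cite{altavilladiff,stoppato} without argument. Your proof is correct and follows the standard route of those references — splitting $T_x\Omega_D$ into the slice direction (handled via the Cauchy–Riemann equation, giving $\partial f_I/\partial\alpha=\partial f/\partial x$ and $\partial f_I/\partial\beta=I\,\partial f/\partial x$) and the spherical direction (handled via the stem-function representation, giving $I'(0)F_2=v_2\,\partial_s f$) — with the only soft spot being the real-point case, where the identity $\partial f/\partial x(\alpha)=\partial_s f(\alpha)$ alone does not yield the formula for $(df)_\alpha$ on all of $\mathbb{H}$; one should add that $df$ is continuous and pass to the limit of the off-axis formula as $\beta\to 0$.
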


\begin{proposition}[\cite{gensalsto}, Proposition 3.3]
 Let $f\in\mathcal{SR}(\Omega_D)$ and $x_0=\alpha+J\beta\in\Omega_D\setminus\mathbb{R}$.
 \begin{itemize}
  \item If $\partial_sf(x_0)=0$ then:
  \begin{itemize}
   \item $df_{x_0}$ has rank 2 if $\frac{\partial f}{\partial x}(x_0)\neq 0$;
   \item $df_{x_0}$ has rank 0 if $\frac{\partial f}{\partial x}(x_0)= 0$.
  \end{itemize}
  \item If $\partial_sf(x_0)\neq0$ then $df_{x_0}$ is not invertible at $x_0$ if and only if $\frac{\partial f}{\partial x}(x_0)(\partial_sf(x_0))^{-1}$ belongs
  to $\mathbb{C}_J^\bot$.
 \end{itemize}
Let now $\alpha\in\Omega_D\cap \mathbb{R}$. $df_{x_0}$ is invertible at $\alpha$ if and only if its rank is not 0 at $x_0=\alpha +J\beta$. This happens if and only if 
$\partial_sf(x_0)=\frac{\partial f}{\partial x}(x_{0})\neq 0$.
\end{proposition}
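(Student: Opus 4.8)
The plan is to reduce the entire statement to the pointwise formula for the real differential supplied by Theorem~\ref{reprediff}. Fix $x_0=\alpha+J\beta\in\Omega_D\setminus\mathbb{R}$ and abbreviate $a:=\frac{\partial f}{\partial x}(x_0)$ and $b:=\partial_s f(x_0)$. Identifying $T_{x_0}\Omega_D$ with $\mathbb{H}=\mathbb{C}_J\oplus\mathbb{C}_J^\bot$, Theorem~\ref{reprediff} describes $(df)_{x_0}$ as the explicit $\mathbb{R}$-linear map $v_1+v_2\mapsto v_1 a+v_2 b$, for $v_1\in\mathbb{C}_J$ and $v_2\in\mathbb{C}_J^\bot$. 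Thus the whole proposition becomes a rank computation for this map, and the single underlying tool is that right multiplication by a nonzero quaternion is an $\mathbb{R}$-linear automorphism of $\mathbb{H}$. I would first record one elementary algebraic fact: writing $\mathbb{C}_J^\bot=\mathbb{C}_J K$ for any $K\in\mathbb{S}$ with $K\bot J$, the splitting $\mathbb{H}=\mathbb{C}_J\oplus\mathbb{C}_J^\bot$ is compatible with left multiplication by $\mathbb{C}_J$, namely $\mathbb{C}_J\cdot\mathbb{C}_J\subseteq\mathbb{C}_J$ (since $\mathbb{C}_J$ is a commutative subfield) and $\mathbb{C}_J\cdot\mathbb{C}_J^\bot\subseteq\mathbb{C}_J^\bot$.

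In the case $b=0$ the differential reduces to $v_1+v_2\mapsto v_1 a$, so its image is $\mathbb{C}_J a$. If $a\neq0$, right multiplication by $a$ is injective, hence $\mathbb{C}_J a$ is two-dimensional and the rank is $2$; if $a=0$ the map is identically zero and the rank is $0$. This settles the first bullet.

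In the case $b\neq0$, domain and codomain have the same real dimension, so invertibility is equivalent to injectivity. Assume $v_1 a+v_2 b=0$ and set $c:=ab^{-1}$; then $v_2=-v_1 c$, so $(df)_{x_0}$ fails to be injective precisely when there is a nonzero $v_1\in\mathbb{C}_J$ with $v_1 c\in\mathbb{C}_J^\bot$. Decomposing $c=c_\parallel+c_\perp$ along $\mathbb{C}_J\oplus\mathbb{C}_J^\bot$, the compatibility fact gives $v_1 c_\parallel\in\mathbb{C}_J$ and $v_1 c_\perp\in\mathbb{C}_J^\bot$, whence $v_1 c\in\mathbb{C}_J^\bot$ if and only if $v_1 c_\parallel=0$. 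If $c_\parallel\neq0$, this forces $v_1=0$ because $\mathbb{C}_J$ is a field, so $(df)_{x_0}$ is injective and hence invertible; if $c_\parallel=0$, that is $c=ab^{-1}\in\mathbb{C}_J^\bot$, then every nonzero $v_1$ yields a kernel element and $(df)_{x_0}$ is not invertible. This is exactly the second bullet, the degenerate subcase $a=0$ being included since then $c=0\in\mathbb{C}_J^\bot$.

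For a real point $\alpha\in\Omega_D\cap\mathbb{R}$, Theorem~\ref{reprediff} gives $(df)_\alpha(v)=v\frac{\partial f}{\partial x}(\alpha)=v\,\partial_s f(\alpha)$, i.e. right multiplication by the single quaternion $q:=\partial_s f(\alpha)=\frac{\partial f}{\partial x}(\alpha)$; such a map has rank $4$ when $q\neq0$ and rank $0$ when $q=0$, with no intermediate value, so $df_\alpha$ is invertible iff its rank is nonzero iff $\partial_s f=\frac{\partial f}{\partial x}\neq0$ there. I do not anticipate any serious obstacle: the only mildly delicate point is the compatibility of the slice splitting with multiplication, which must be stated with care but is elementary.
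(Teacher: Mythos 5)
Your proof is correct. Note that the paper itself states this proposition as a citation of \cite{gensalsto} and gives no proof of its own; your derivation, reading everything off the pointwise formula $(df)_{x_0}(v_1+v_2)=v_1\frac{\partial f}{\partial x}(x_0)+v_2\partial_sf(x_0)$ from Theorem~\ref{reprediff} and using that $\mathbb{C}_J$ is a subfield with $\mathbb{C}_J\cdot\mathbb{C}_J^\bot\subseteq\mathbb{C}_J^\bot$, is exactly the standard argument used in the cited source, including the correct handling of the degenerate case $\frac{\partial f}{\partial x}(x_0)=0$ via $0\in\mathbb{C}_J^\bot$.
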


\begin{definition}[Singular set]
 Let $f:\Omega\rightarrow\mathbb{H}$ be any quaternionic function of quaternionic variable. We define the \textit{singular set} of $f$ as
 \begin{equation*}
  N_f:=\{x\in\Omega\,|\,df\mbox{ is not invertible at }x\}.
 \end{equation*}
\end{definition}

Given a slice regular function $f$ that is not slice constant, then its singular set $N_{f}$ is closed
with empty interior; moreover if $f$ is injective then it spherical and  slice derivatives are both nonzero.
With some other information it is possible to prove the following theorem.
\begin{theorem}[\cite{gensalsto,altavilladiff}]\label{injective}
Let $f$ be an injective slice regular function, then $N_{f}=\emptyset$.
\end{theorem}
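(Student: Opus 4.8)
Since a slice constant function has the form $\alpha+I\beta\mapsto a+Ib$ (see the remark following Proposition~\ref{slccnst}), it is constant on each locus $\{I=\mathrm{const}\}$ and so cannot be injective; hence an injective $f$ is never slice constant, and by the observations recalled just above its slice derivative $\frac{\partial f}{\partial x}$ and its spherical derivative $\partial_s f$ are both nonzero at every point of $\Omega_D$. The plan is to show that, under these nonvanishing conditions, the degeneracy condition of the preceding proposition can never occur without contradicting injectivity.

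First I would dispose of the real points. If $\alpha\in\Omega_D\cap\mathbb{R}$, the formula $\frac{\partial f}{\partial x}(x)=2\mathrm{Im}(x)\left(\frac{\partial}{\partial x}\partial_s f\right)(x)+\partial_s f(x)$ gives, by continuity of $\partial_s f$ up to the real axis, $\frac{\partial f}{\partial x}(\alpha)=\partial_s f(\alpha)$, and this common value is nonzero; the preceding proposition then guarantees that $df$ is invertible at $\alpha$. Thus the only case left is a non-real $x_0=\alpha+J\beta$, where the sole remaining possibility for non-invertibility is $q:=\frac{\partial f}{\partial x}(x_0)\bigl(\partial_s f(x_0)\bigr)^{-1}\in\mathbb{C}_J^\bot$.

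To handle this case I would pass to the description $\Omega_D\setminus\mathbb{R}\simeq D^+\times\mathbb{S}$, under which $f(z,K)=F_1(z)+KF_2(z)$. Since $F_2(z)\neq 0$, the assignment $K\mapsto F_1(z)+KF_2(z)$ parametrizes injectively a round $2$-sphere $\Sigma_z\subset\mathbb{H}$, and one checks that $f$ is globally injective if and only if the spheres $\{\Sigma_z\}_{z\in D^+}$ are pairwise disjoint. By Theorem~\ref{reprediff} the differential sends $\mathbb{C}_J^\bot$ onto $\mathbb{C}_J^\bot\,\partial_s f(x_0)$, which is exactly the tangent plane to $\Sigma_{z_0}$ at $f(x_0)$; the condition $q\in\mathbb{C}_J^\bot$ therefore says precisely that the slice velocity $\frac{\partial f}{\partial x}(x_0)$ is tangent to $\Sigma_{z_0}$ there, i.e. that the family $z\mapsto\Sigma_z$ is tangent to first order at the point $f(x_0)$.

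The crux is to upgrade this infinitesimal tangency to an honest double point, for first-order degeneracy alone does not contradict injectivity of a general $\mathcal{C}^1$ map (witness $x\mapsto x^3$). Here slice regularity is indispensable: by the Splitting Lemma (Lemma~\ref{splitting}) the restriction $f_J=G+HK$ is built from holomorphic $G,H\colon D_J\to\mathbb{C}_J$, so $F_1,F_2$ are holomorphic in $z$. I would encode the disjointness of $\Sigma_{z_1}$ and $\Sigma_{z_2}$ as the non-vanishing of an expression holomorphic in $(z_1,z_2)$ which, by the tangency just established, vanishes to second order along the diagonal $z_1=z_2$ at $z_0$; an argument-principle (local normal form) analysis then yields nearby $z_1\neq z_2$ with $\Sigma_{z_1}\cap\Sigma_{z_2}\neq\emptyset$, hence two distinct points with the same image. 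This contradicts injectivity and forces $N_f=\emptyset$. I expect this final passage --- from tangency to a genuine intersection, carried out in one complex variable --- to be the main obstacle, and it is exactly where regularity, rather than mere differentiability, does the work.
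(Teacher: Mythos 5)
The paper does not actually prove this statement --- it is quoted from \cite{gensalsto,altavilladiff} --- so there is no internal proof to compare against; I can only assess your argument on its own terms. Your reduction is sound and matches the standard setup: injectivity kills slice constancy, forces $\partial_s f\neq 0$ (since $\partial_s f(x_0)=0$ would give $f(x_0)=f(x_0^c)$ with $x_0\neq x_0^c$), the real points are handled by the identity $\frac{\partial f}{\partial x}(\alpha)=\partial_s f(\alpha)$, and the only surviving case is the tangency condition $\frac{\partial f}{\partial x}(x_0)\bigl(\partial_s f(x_0)\bigr)^{-1}\in\mathbb{C}_J^\bot$, which you correctly read as saying that $df(\mathbb{C}_J)$ falls into the tangent plane of the image sphere $\Sigma_{z_0}=f(\mathbb{S}_{x_0})$.

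The gap is exactly where you locate it, and your proposed mechanism for closing it does not work as described. Disjointness of the round $2$-spheres $\Sigma_{z_1},\Sigma_{z_2}\subset\mathbb{H}\simeq\mathbb{R}^4$ (each of radius $|F_2(z)|$, centred at $F_1(z)$, sitting in its own affine $3$-plane) is a \emph{real}-analytic condition in $(z_1,z_2)$, not the non-vanishing of a function holomorphic in $(z_1,z_2)$; and for a merely real-analytic obstruction, second-order vanishing along the diagonal does not produce off-diagonal zeros --- compare $|z_1-z_2|^2$, which vanishes to second order on the diagonal and nowhere else. So the ``argument-principle'' step has nothing holomorphic to act on, and the crucial passage from infinitesimal tangency to an honest double point remains unproved. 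The way to rescue the idea is to move into the twistor space, which is where the cited proofs live: the sphere $\Sigma_z$ is the $\pi$-image of the projective line $\tilde{f}(l_v)$, two lines in $\mathbb{CP}^3$ meet if and only if the holomorphic Pl\"ucker pairing of their Grassmannian coordinates vanishes (equivalently, one argues via positivity of intersections of the holomorphic surface $\tilde{f}(\mathcal{Q}^+)$ with the twistor fibres: a first-order tangency gives local intersection multiplicity at least $2$, hence nearby fibres meet the image in two distinct points, which project to $x_1\neq x_2$ with $f(x_1)=f(x_2)$). That is the step where slice regularity genuinely enters, and it is absent from your sketch.
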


%

\section{Twistor lift}\label{twlift}
Starting from Theorem~\ref{parabola} (or equivalently from Theorem~\ref{classification}, part (1)), 
we know that, up to sign, on $\mathbb{H}\setminus\mathbb{R}$ it is possible to define only one
non-constant OCS. This OCS can be defined as follows.

\begin{definition}[Slice complex structure]
 Let $p=\alpha+I_p\beta\in X=\mathbb{H}\setminus\mathbb{R}$ with $\beta>0$, and let us identify $T_pX\simeq \mathbb{H}$. We define the following 
 OCS over $X$:
 \begin{equation*}
  \mathbb{J}_{p}v=\frac{Im(p)}{||Im(p)||} v = I_p v,
 \end{equation*}
where $v$ is a tangent vector to $X$ in $p$ and $I_{p}v$
denotes the quaternionic multiplication between $I_{p}$ and $v$.
\end{definition}
Later we will describe the algebraic surface in $\mathbb{CP}^{3}$ arising from this OCS,
but now let us go back again to quaternionic functions.
The theory regarding the relation between slice regular functions and twistor geometry
starts thanks to Theorems~\ref{reprediff} and~\ref{injective} that extend two results
proved, respectively in~\cite{stoppato} and~\cite{gensalsto}.

Given an injective slice regular function $f:\Omega_D\rightarrow \mathbb{H}$  we define the pushforward of 
$\mathbb{J}$ via $f$ on $f(\Omega_D\setminus \mathbb{R})$ as:
\begin{equation*}
 \mathbb{J}^f:=(df)\mathbb{J}(df)^{-1},
\end{equation*}
for any $v\in T_{f(p)}f(\Omega_{D}\setminus\mathbb{R})\simeq\mathbb{H}$.

 The following theorem explains the action of the push-forward of $\mathbb{J}$ via a  slice regular function.
\begin{theorem}
Let $f:\Omega_{D}\rightarrow\mathbb{H}$ be an injective slice regular function and $p=\alpha+I_p\beta\in\Omega_D$. Then
\begin{equation*}
\mathbb{J}^f_{f(p)}v=\frac{Im(p)}{||Im(p)||} v=I_pv.
\end{equation*}
Moreover $\mathbb{J}^f$ is an OCS on the image of $f$.
\end{theorem}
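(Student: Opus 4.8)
The plan is to reduce the whole statement to the representation of the real differential in Theorem~\ref{reprediff} together with the elementary fact that in $\mathbb{H}$ left and right multiplications commute. First I would record that, since $f$ is injective, Theorem~\ref{injective} gives $N_f=\emptyset$, so $df_p$ is invertible at every $p=\alpha+I_p\beta\in\Omega_D\setminus\mathbb{R}$ and the pushforward $\mathbb{J}^f=(df)\mathbb{J}(df)^{-1}$ is well defined. The key observation is a mismatch of chirality: the structure $\mathbb{J}_p$ is \emph{left} multiplication by $I_p$, while by Theorem~\ref{reprediff} the map $df_p$ acts on $T_p\Omega_D\simeq\mathbb{C}_{I_p}\oplus\mathbb{C}_{I_p}^\perp$ as \emph{right} multiplication by $\frac{\partial f}{\partial x}(p)$ on the slice factor $\mathbb{C}_{I_p}$ and by $\partial_sf(p)$ on the orthogonal factor $\mathbb{C}_{I_p}^\perp$.

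For the explicit formula, fix $v\in T_{f(p)}f(\Omega_D\setminus\mathbb{R})$ and set $w=(df_p)^{-1}v$, decomposed as $w=w_1+w_2$ with $w_1\in\mathbb{C}_{I_p}$ and $w_2\in\mathbb{C}_{I_p}^\perp$. Since left multiplication by the unit quaternion $I_p$ is an isometry of $\mathbb{H}$ fixing the slice $\mathbb{C}_{I_p}$, it preserves the orthogonal splitting, so $\mathbb{J}_pw=I_pw_1+I_pw_2$ is again split with $I_pw_1\in\mathbb{C}_{I_p}$ and $I_pw_2\in\mathbb{C}_{I_p}^\perp$. Applying Theorem~\ref{reprediff} to both $w$ and $\mathbb{J}_pw$ and using associativity of the quaternionic product one obtains
\begin{equation*}
(df_p)(\mathbb{J}_pw)=(I_pw_1)\frac{\partial f}{\partial x}(p)+(I_pw_2)\partial_sf(p)=I_p\left(w_1\frac{\partial f}{\partial x}(p)+w_2\partial_sf(p)\right)=I_pv,
\end{equation*}
which is exactly $\mathbb{J}^f_{f(p)}v=I_pv$. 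The whole argument turns on the single point that $\mathbb{J}$ acts on the left whereas $df$ acts on the right, so the two operations commute; everything else is bookkeeping with the splitting.

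For the final assertion, the explicit formula immediately yields two of the three defining properties: since $I_p^2=-1$ we get $(\mathbb{J}^f_{f(p)})^2v=I_p(I_pv)=-v$, so $\mathbb{J}^f$ is an almost complex structure, and since left multiplication by a unit imaginary quaternion lies in $SO(4)$ it is orthogonal and orientation preserving. It remains to check integrability, and here I would avoid computing the Nijenhuis tensor directly: as $f$ is injective with $N_f=\emptyset$ it is a diffeomorphism onto its image, and $\mathbb{J}^f=f_*\mathbb{J}$ is by construction the pushforward of the slice complex structure $\mathbb{J}$, which is integrable on $\mathbb{H}\setminus\mathbb{R}$. Since the Nijenhuis tensor is natural under diffeomorphisms, $N_{\mathbb{J}^f}=f_*N_{\mathbb{J}}=0$, and $\mathbb{J}^f$ is an OCS on $f(\Omega_D\setminus\mathbb{R})$. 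I expect no genuine obstacle here; the only care needed is the well-definedness of $\mathbb{J}^f$ as a field on the image, which is guaranteed precisely by the injectivity of $f$ (it makes $p\mapsto I_p$ descend through $f$ and makes $df_p$ invertible).
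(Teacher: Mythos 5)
Your proof is correct and follows essentially the same route as the paper: the same computation $(df)_p\,\mathbb{J}_p\,(df)_p^{-1}v = I_p w_\top \frac{\partial f}{\partial x}(p) + I_p w_\bot \partial_s f(p) = I_p v$ via Theorem~\ref{reprediff}, resting on the fact that the left action of $I_p$ commutes with the right multiplications in the differential (you make explicit the point, implicit in the paper, that left multiplication by $I_p$ preserves the splitting $\mathbb{C}_{I_p}\oplus\mathbb{C}_{I_p}^\perp$). For the ``moreover'' part the paper only checks orthogonality pointwise and defers the rest to~\cite{gensalsto}, whereas you supply a clean self-contained argument (almost complex from $I_p^2=-1$, orthogonality from $SO(4)$, integrability from naturality of the Nijenhuis tensor under the diffeomorphism $f$), which is a welcome completion rather than a divergence.
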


\begin{proof}
The theorem can be proved as in~\cite{gensalsto}, but we will write again the proof using the representation given in Theorem~\ref{reprediff} of the real differential of
a slice regular function. The thesis follows thanks to the next computations. Let $v$ be a tangent vector to $f(\Omega_D\setminus\mathbb{R})$ in $f(x)$
\begin{equation*}
 \mathbb{J}_{f(x)}^fv=(df)_x\mathbb{J}_x(df)_{f(x)}^{-1}v.
\end{equation*}
Setting $(df)_{f(x)}^{-1}v=w$ and denoting by $w_\top$ and $w_\bot$, respectively, the tangential and orthogonal part of $w$ with respect to $\mathbb{C}_{I_x}$, we obtain,
\begin{equation*}
 \begin{array}{rcl}
  (df)_x\mathbb{J}_x(df)_{f(x)}^{-1}v & = & (df)_x\mathbb{J}_x w=(df)_x I_x w\\
  & = & I_xw_\top \displaystyle\frac{\partial f}{\partial x}(x)+I_x w_\bot\partial_s f(x)\\
  & = & I_x(df)_x w=I_xv.
 \end{array}
\end{equation*}

For the second part of the theorem we refer again to~\cite{gensalsto}, anyway, it is enough to
compute the quantity $g_{Eucl}(\mathbb{J}X,\mathbb{J}Y)$, pointwise. 
\end{proof}
%
%

In the next pages we will recover, in the context of slice regular functions defined on domains
without real point, the twistor theory introduced in~\cite{gensalsto}.
First of all we need to introduce coordinates for the sphere $\mathbb{S}$ of imaginary units. For this purpose we
will follow the construction in~\cite[Section 4]{gensalsto}. 
For any $q=\alpha+I\beta$, $\beta>0$, if $I=ai+bj+ck$, we can write
$$
q=\alpha+I\beta=\alpha+Q_{u}^{-1}iQ_{u}\beta,
$$
where $Q_{u}=1+uj$ and $u=-i\frac{b+ic}{1+a}$.

%
%

We embed now $\mathbb{H}\setminus\mathbb{R}$ in $\mathbb{HP}^1$ via the function $q\rightarrow [1,q]$. Given $q=\alpha+I\beta\in\mathbb{H}\setminus\mathbb{R}$, if $u$ is defined as above and $v=\alpha+i\beta$, such an embedding can be 
viewed, also, in the following way:
\begin{equation*}
 \begin{array}{rcl}
  [1,q] & = & [1,Q_u^{-1}vQ_u]=[Q_u,vQ_u]\\
  & = & [1+uj,v+vuj]=\pi[1,u,v,uv],
 \end{array}
\end{equation*}
and so, we have obtained, as in~\cite{gensalsto}, the following proposition.
\begin{proposition}
 The complex manifold $(\mathbb{H}\setminus\mathbb{R},\mathbb{J})$ is biholomorphic to the open subset $\mathcal{Q}^+$ of the quadric
 \begin{equation}\label{quadric}
  \mathcal{Q}=\{[X_0,X_1,X_2,X_3]\in\mathbb{CP}^3\,\mid\,X_0X_3=X_1X_2\},
 \end{equation}
such that at least one of the following conditions is satisfied:
\begin{itemize}
 \item $X_0\neq 0$ and $X_2/X_0\in\mathbb{C}^+$,
 \item $X_1\neq 0$ and $X_3/X_1\in\mathbb{C}^+$.
\end{itemize}
\end{proposition}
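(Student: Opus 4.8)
The plan is to exhibit the explicit biholomorphism coming from the computation just performed, namely the map
\[
\Phi:\mathbb{H}\setminus\mathbb{R}\to\mathbb{CP}^3,\qquad \Phi(\alpha+I\beta)=[1,u,v,uv],
\]
where, as above, $v=\alpha+i\beta$ (with $\beta>0$) and $u=-i\frac{b+ic}{1+a}$ when $I=ai+bj+ck$. This $\Phi$ is precisely the section of the twistor fibration $\pi$ singling out, over each $q=\alpha+I\beta$, the orthogonal almost complex structure $\mathbb{J}_q=I\,\cdot$. First I would record that $\Phi$ lands on the quadric $\mathcal{Q}$: the homogeneous coordinates $(X_0,X_1,X_2,X_3)=(1,u,v,uv)$ satisfy $X_0X_3=uv=X_1X_2$ identically, since $u$ and $v$ lie in $\mathbb{C}_i$ and commute. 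Because $X_0=1\neq0$ and $X_2/X_0=v=\alpha+i\beta\in\mathbb{C}^+$ (as $\beta>0$), the image point satisfies the first defining condition of $\mathcal{Q}^+$, so that $\Phi(\mathbb{H}\setminus\mathbb{R})\subset\mathcal{Q}^+$.

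One genuine subtlety must be addressed: $u$ is a stereographic coordinate on the sphere $\mathbb{S}$ and blows up exactly when $a=-1$, i.e. when $I=-i$. On this locus I would pass to the chart $X_1\neq0$: writing $\Phi(\alpha+I\beta)=[1/u,1,v/u,v]$ and letting $u\to\infty$ yields the well-defined limit $[0,1,0,v]$ with $v=\alpha+i\beta\in\mathbb{C}^+$, which satisfies the second defining condition of $\mathcal{Q}^+$. Thus $\Phi$ is a smooth map on all of $\mathbb{H}\setminus\mathbb{R}$ with image inside $\mathcal{Q}^+$, and the two alternatives in the definition of $\mathcal{Q}^+$ are exactly the two coordinate charts needed to describe this image.

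Next I would prove that $\Phi$ is a bijection onto $\mathcal{Q}^+$. Injectivity is immediate: from $[1,u,v,uv]$ one reads off $v$, hence $\alpha=\mathrm{Re}(v)$ and $\beta=\mathrm{Im}(v)$, and one reads off $u$, hence $I=Q_u^{-1}iQ_u$, so $q=\alpha+I\beta$ is recovered. For surjectivity I would take any $[X_0,X_1,X_2,X_3]\in\mathcal{Q}^+$ and invert: if $X_0\neq0$ and $v:=X_2/X_0\in\mathbb{C}^+$, set $u:=X_1/X_0$, so the relation $X_0X_3=X_1X_2$ forces $X_3/X_0=uv$ and the point equals $\Phi(\alpha+I\beta)$ with $\alpha+i\beta=v$, $I=Q_u^{-1}iQ_u$; if instead $X_1\neq0$ and $v:=X_3/X_1\in\mathbb{C}^+$, the same computation (now with $u=X_0/X_1$, which may vanish, the case $X_0=0$ corresponding to $I=-i$ and to the preimage $\overline{v}$) produces a preimage in $\mathbb{H}\setminus\mathbb{R}$.

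Finally, holomorphy. By the general twistor correspondence recalled in Section~\ref{twsec}, the section of $\pi$ determined by an orthogonal almost complex structure has complex-submanifold image precisely when that structure is integrable; since $\mathbb{J}$ is an OCS, this makes $\Phi$ a holomorphic map from $(\mathbb{H}\setminus\mathbb{R},\mathbb{J})$ into $\mathbb{CP}^3$ whose image is a complex hypersurface, namely the open piece $\mathcal{Q}^+$ of the smooth quadric $\mathcal{Q}$. Being an injective holomorphic map between complex manifolds of the same dimension, $\Phi$ is then a biholomorphism onto $\mathcal{Q}^+$. I expect the main obstacle to be the bookkeeping at $I=-i$ together with the verification of holomorphy; the cleanest route for the latter is to invoke the integrability of $\mathbb{J}$ through the twistor principle of Section~\ref{twsec}, rather than checking by hand that $u$ and $v$ are $\mathbb{J}$-holomorphic coordinates.
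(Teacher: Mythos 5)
Your proof is correct and takes essentially the same route as the paper: the explicit parametrization $q=\alpha+I\beta\mapsto[1,u,v,uv]$ coming from $[1,q]=[Q_u^{-1}vQ_u]=[Q_u,vQ_u]=\pi[1,u,v,uv]$. The paper only records this computation and defers the remaining verifications (that the image lies in $\mathcal{Q}^+$, bijectivity, the chart at $I=-i$, and holomorphy via integrability of $\mathbb{J}$) to~\cite{gensalsto}, so your write-up simply fills in those same details explicitly.
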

The quadric $\mathcal{Q}$ is biholomorphic to $\mathbb{CP}^1\times\mathbb{CP}^1$, while $\mathcal{Q}^+$ is biholomorphic to $\mathbb{CP}^1\times\mathbb{C}^+$.

Now we have all the ingredients to state the following theorem which generalizes~\cite[Theorem 5.3]{gensalsto}.
\begin{theorem}\label{thmlift}
 Let $D$ be a domain of $\mathbb{C}$ and $\Omega_D\subset\mathbb{H}$ its circularization. Let $f:\Omega_D\rightarrow\mathbb{H}$  be
 any slice function. Then $f$ admits a twistor lift to $\mathcal{O}=\pi^{-1}(\Omega_D\setminus\mathbb{R})\cap \mathcal{Q}^+$, i.e.: there exists a 
 function $\tilde{f}:\mathcal{O}\rightarrow \mathbb{CP}^3$, such that $\pi\circ \tilde{f}=f\circ\pi$. 
 Moreover $f$ is slice regular if and only if $\tilde{f}$ is a holomorphic map.
\end{theorem}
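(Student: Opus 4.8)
The plan is to construct $\tilde f$ by an explicit formula in the coordinates $(u,v)$ on $\mathcal{Q}^+$, where a point $[1,u,v,uv]\in\mathcal{Q}^+$ satisfies $\pi[1,u,v,uv]=[1,q]$ with $q=\alpha+I\beta$, $I=Q_u^{-1}iQ_u$, $Q_u=1+uj$ and $v=\alpha+i\beta$. Writing $f=\mathcal{I}(F_1+\sqrt{-1}F_2)$, I would define $\tilde f([1,u,v,uv])$ to be the point of the twistor fibre over $f(q)$ singled out by the imaginary unit $I$. Since the fibre coordinate is exactly $u$, this forces the first two homogeneous coordinates to remain $[1,u,\ldots]$ and determines the last two by the projection condition, namely
\begin{equation*}
\tilde f([1,u,v,uv])=[1,u,Y_2,Y_3],\qquad Y_2+Y_3j=Q_uf(q)=Q_uF_1+iQ_uF_2,
\end{equation*}
where the simplification uses $I=Q_u^{-1}iQ_u$. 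This expression makes sense for \emph{every} slice function, with no injectivity or regularity hypothesis, so it is the right object for the full generality of the statement.

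The identity $\pi\circ\tilde f=f\circ\pi$ is then immediate from the construction: applying $\pi$ gives
\begin{equation*}
\pi[1,u,Y_2,Y_3]=[Q_u,\,Y_2+Y_3j]=[Q_u,\,Q_uf(q)]=[1,f(q)],
\end{equation*}
which is precisely $f(\pi[1,u,v,uv])$ under the embedding $q\mapsto[1,q]$. I would also verify that the formula is independent of the chart on the $\mathbb{CP}^1$ factor, extending it across $u=\infty$ by passing to the chart $X_1=1$ and using the symmetry of the construction under the involution $j$; this guarantees that $\tilde f$ is globally defined on all of $\mathcal{O}$.

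For the equivalence of holomorphicity and slice regularity I would decompose $F_1=p_1+p_2j$ and $F_2=r_1+r_2j$ with $p_1,p_2,r_1,r_2\in\mathbb{C}_i$ functions of $v$, and commute $j$ past the complex coefficients to obtain
\begin{equation*}
Y_2=(p_1+ir_1)-u(\bar p_2+i\bar r_2),\qquad Y_3=(p_2+ir_2)+u(\bar p_1+i\bar r_1),
\end{equation*}
where the bar denotes conjugation in $\mathbb{C}_i$. Since $Y_2,Y_3$ are affine in $u$ and involve no $\bar u$, holomorphicity in the fibre direction is automatic, and the content is holomorphicity in $v$. Applying $\partial_{\bar v}=\tfrac12(\partial_\alpha+i\partial_\beta)$ and collecting the terms of degree $0$ and $1$ in $u$, each coefficient vanishes exactly when the pairs $(p_1,r_1)$ and $(p_2,r_2)$ satisfy the Cauchy--Riemann system $\partial_\alpha p_k=\partial_\beta r_k$, $\partial_\alpha r_k=-\partial_\beta p_k$. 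By Remark~\ref{charareg} these are precisely the equations expressing $\partial F/\partial\bar z=0$, i.e.\ the slice regularity of $f$, so $\tilde f$ is holomorphic if and only if $f$ is slice regular.

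The step I expect to be the main obstacle is the bookkeeping of the two distinct imaginary units in play: the quaternion $i$, which governs both $\mathbb{C}_i$ and the fibre structure, versus the complex structure $\sqrt{-1}$ of $\mathbb{H}_\mathbb{C}$ ruling the stem variable $v$. One must check that the conjugated coefficients $\bar p_k,\bar r_k$ appearing in the degree-one part of $Y_2,Y_3$ still reproduce the \emph{unconjugated} Cauchy--Riemann equations; this works because conjugation in $\mathbb{C}_i$ carries the CR system for $(p_k,r_k)$ to the CR system for $(\bar p_k,\bar r_k)$, so no new condition is created. The secondary delicate point is the well-definedness of $\tilde f$ at the fibre point $u=\infty$, which is resolved by the chart change indicated above.
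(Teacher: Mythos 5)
Your proposal is correct and follows essentially the same route as the paper: you build the lift from $[1,f(q)]=[Q_u,\,Q_uF_1+iQ_uF_2]$, split the stem function into its $\mathbb{C}_i$ and $\mathbb{C}_i j$ components (your $p_k,r_k$ play exactly the role of the paper's $F^{\top},F^{\bot}$), and read off that the coordinates are affine in $u$ with coefficients $p_k+ir_k$ and $\bar p_k+i\bar r_k$, which are holomorphic in $v$ precisely when $\partial F/\partial\bar z=0$. Your explicit Cauchy--Riemann bookkeeping and the remark on conjugation preserving the CR system are sound and merely make explicit what the paper compresses into ``$F^{\top},F^{\bot}$ are holomorphic stem functions.''
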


As we said, this theorem was already proven in~\cite{gensalsto}, when the domain $D$ has nonempty intersection with the real line and  the function $f$ is regular. 
Our proof includes also the case in which $f$ does not extends to the real line and it is not regular, so it is more general. 
To add this extension we will use the previously described formalism of stem
functions to which we add this trivial lemma that is a consequence of~\cite[Lemma 6.11]{ghilmorper}.

\begin{lemma}
Let $f=\mathcal{I}(F):\Omega_{D}\rightarrow \mathbb{H}$ be a slice function induced by the 
stem function $F:D\rightarrow \mathbb{H}_{\mathbb{C}}$. Then, for each couple 
$I,J\in\mathbb{S}$ such that $I\bot J$, there exist two stem functions 
$F^{\top},F^{\bot}:D\rightarrow\mathbb{C}_{I}\otimes_{\mathbb{R}}\mathbb{C}$, such that
$f=f^{\top}+f^{\bot}J$ with $f^{\top}=\mathcal{I}(F^{\top})$, while $f^{\bot}=\mathcal{I}(F^{\bot})$.
\end{lemma}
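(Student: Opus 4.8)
The plan is to reduce the statement to the elementary real splitting $\mathbb{H}=\mathbb{C}_I\oplus\mathbb{C}_I J$ of the quaternions, to tensor it with $\mathbb{C}$, and then to check that the stem condition survives the splitting. Concretely, fix $I,J\in\mathbb{S}$ with $I\bot J$. Since $\{1,I,J,IJ\}$ is an orthonormal basis of $\mathbb{H}$, every quaternion $x$ has a unique decomposition $x=\xi+\eta J$ with $\xi,\eta\in\mathbb{C}_I$; equivalently $\mathbb{H}=\mathbb{C}_I\oplus\mathbb{C}_I J$ and $\mathbb{C}_I J=\mathbb{C}_I^\bot$. The projections onto the two summands form a fixed real-linear, $z$-independent map, and this is exactly what makes everything go through. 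Tensoring with $\mathbb{C}$ over $\mathbb{R}$ gives $\mathbb{H}_{\mathbb{C}}=(\mathbb{C}_I\otimes_\mathbb{R}\mathbb{C})\oplus(\mathbb{C}_I\otimes_\mathbb{R}\mathbb{C})J$, where $J$ acts by right multiplication.

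First I would apply this decomposition to the two components of the stem function. Writing $F=F_1+\sqrt{-1}F_2$, I decompose $F_1(z)=F_1^{\top}(z)+F_1^{\bot}(z)J$ and $F_2(z)=F_2^{\top}(z)+F_2^{\bot}(z)J$, where $F_1^{\top},F_1^{\bot},F_2^{\top},F_2^{\bot}\colon D\to\mathbb{C}_I$ inherit the regularity of $F_1,F_2$, being obtained by post-composing with the linear projections of the splitting. I then set $F^{\top}:=F_1^{\top}+\sqrt{-1}F_2^{\top}$ and $F^{\bot}:=F_1^{\bot}+\sqrt{-1}F_2^{\bot}$, both valued in $\mathbb{C}_I\otimes_\mathbb{R}\mathbb{C}$.

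Next I would verify the two required identities. Since $\sqrt{-1}$ is the tensor factor, it commutes with the quaternion $J$, so $F^{\bot}J=F_1^{\bot}J+\sqrt{-1}F_2^{\bot}J$; adding $F^{\top}$ yields $F^{\top}+F^{\bot}J=(F_1^{\top}+F_1^{\bot}J)+\sqrt{-1}(F_2^{\top}+F_2^{\bot}J)=F_1+\sqrt{-1}F_2=F$. For the stem property, recall that $F$ being a stem function means $F_1$ is even and $F_2$ is odd in $\beta$; because the splitting map is linear and independent of $z$, each of $F_1^{\top},F_1^{\bot}$ is again even and each of $F_2^{\top},F_2^{\bot}$ is again odd, whence $F^{\top}(\bar z)=\overline{F^{\top}(z)}$ and $F^{\bot}(\bar z)=\overline{F^{\bot}(z)}$, so $F^{\top}$ and $F^{\bot}$ are genuine $(\mathbb{C}_I\otimes_\mathbb{R}\mathbb{C})$-valued stem functions. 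Passing to the induced slice functions, for $x=\alpha+K\beta$ associativity of quaternionic multiplication gives $f^{\top}(x)+f^{\bot}(x)J=(F_1^{\top}+KF_2^{\top})+(F_1^{\bot}+KF_2^{\bot})J=(F_1^{\top}+F_1^{\bot}J)+K(F_2^{\top}+F_2^{\bot}J)=f(x)$, which is the claimed decomposition $f=f^{\top}+f^{\bot}J$.

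There is no serious obstacle here: the content is precisely the Splitting Lemma~\ref{splitting} read at the level of stem functions. The only points requiring care are bookkeeping ones, namely that the $\mathbb{C}_I\oplus\mathbb{C}_I J$ projections commute with the even/odd (stem) symmetry, which holds because they are $z$-independent, and that $\sqrt{-1}$ and $J$ commute in $\mathbb{H}_{\mathbb{C}}$, which is immediate from the product rule $pq=xz-yt+\sqrt{-1}(xt+yz)$ evaluated at $q=J$. This is also why the statement is a \emph{trivial} lemma and may be attributed to~\cite[Lemma 6.11]{ghilmorper}.
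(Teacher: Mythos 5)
Your proof is correct and complete. The paper itself offers no argument for this lemma, merely calling it a trivial consequence of \cite[Lemma 6.11]{ghilmorper}; your verification via the decomposition $\mathbb{H}=\mathbb{C}_I\oplus\mathbb{C}_I J$ tensored with $\mathbb{C}$, together with the observation that the $z$-independent linear projections preserve the even/odd (stem) symmetry of $F_1$ and $F_2$, is exactly the intended elementary argument and supplies the details the paper omits.
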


Now we pass to the proof of Theorem~\ref{thmlift}.

\begin{proof}
Since $f$ is a slice function, then it is induced by a stem function $F:D\rightarrow \mathbb{H}_{\mathbb{C}}$ such that, for $q=\alpha+I\beta\in\Omega_{D}$,
\begin{equation*}
f(q)=f(\alpha+I\beta)=f(\alpha+Q_u^{-1}iQ_u\beta)=F_1(\alpha+i\beta)+Q_u^{-1}iQ_uF_2(\alpha+i\beta).
\end{equation*}
Thanks to the previous lemma $f$ can be written also as $f=f^{\top}+f^{\bot}j$, with $f^{\top}=\mathcal{I}(F^{\top})$, $f^{\bot}=\mathcal{I}(F^{\bot})$, $F^{\top},F^{\bot}:D\rightarrow \mathbb{C}_{i}\otimes_{\mathbb{R}}\mathbb{C}$.
Now, each stem function splits into two components, 
$F^{\top}=F_{1}^{\top}+\sqrt{-1}F_{2}^{\top}$ and $F^{\bot}=F_{1}^{\bot}+\sqrt{-1}F_{2}^{\bot}$, and we define, for  
$i\in\mathbb{S}$,
$F_{i}^{\top}=p_{i}\circ F^{\top}$ and $F_{i}^{\bot}=p_{i}\circ F^{\bot}$, where $p_{i}$ is the map that sends $\sqrt{-1}$ to
$i$ (i.e. if $w=x+\sqrt{-1}y\in\mathbb{H}_{\mathbb{C}}$, then $p_i(w)=x+iy$). To summarize we have the following diagram
$$
\begindc{\commdiag}[50]
\obj(0,10)[aa]{$D$}
\obj(20,10)[bb]{$\mathbb{C}_{i}\otimes_{\mathbb{R}}\mathbb{C}$}
\obj(20,0)[dd]{$\mathbb{C}_{i}$}
\mor{aa}{bb}{$F^{\top},F^{\bot}$}
\mor{bb}{dd}{$p_{i}$}
\mor{aa}{dd}{$F_{i}^{\top},F_{i}^{\top}$}[-1,0]
\enddc
$$
Letting finally $q=\alpha+I\beta$ and $v=\alpha+i\beta$ and recalling that $Q_{u}=1+uj$, we can compute,
 \begin{equation*}
  \begin{array}{rcl}
   [1,f(q)] & = & [1,f(Q_u^{-1}(\alpha+i\beta)Q_u)]\\
   & = & [1,f(\alpha+Q_u^{-1}iQ_u\beta)]\\
   & = & [1,F^{\top}_{1}(v)+Q_u^{-1}iQ_u F^{\top}_{2}(v)+F^{\bot}_{1}(v)j+Q_u^{-1}iQ_uF^{\bot}_{2}(v)j]\\
   & = & [Q_u,F^{\top}_{1}+ujF^{\top}_{1}+iF^{\top}_{2}+uijF^{\top}_{2}+F^{\bot}_{1}j+ujF^{\bot}_{1}j+iF^{\bot}_{2}j+uijF^{\bot}_{2}j],   
   \end{array}
   \end{equation*}
where in the last equality we have omitted the variable $v$. 
Now, for any $w\in\mathbb{C}_{i}$,
we have that $jw=w^{c}j$ and $jwj=-w^{c}$ and so, identifying $\mathbb{C}_{i}$ with $\mathbb{C}$,
   
\begin{multline*}
    [Q_u,F^{\top}_{1}+ujF^{\top}_{1}+iF^{\top}_{2}+uijF^{\top}_{2}+F^{\bot}_{1}j+ujF^{\bot}_{1}j+iF^{\bot}_{2}j+uijF^{\bot}_{2}j]  =\\   [Q_u,F^{\top}_{1}+uF^{\top c}_{1}j+iF^{\top}_{2}+uiF^{\top c}_{2}j+F^{\bot}_{1}j-uF^{\bot c}_{1}+iF^{\bot}_{2}j-uiF^{\bot c}_{2}]=\\
=    [Q_u,F^{\top}_{1}+iF^{\top}_{2}+(F^{\bot}_{1}+iF^{\bot}_{2})j+u((F^{\top c}_{1}+iF^{\top c}_{2})j-(F^{\bot c}_{1}+iF^{\bot c}_{2}))].
\end{multline*}
 We finally obtain the coordinates of the lift:
   \begin{equation}\label{lift}
   \tilde{f}[1,u,v,uv]=[1,u,p_{i}\circ F^{\top}(v)-u(p_{i}\circ F^{\bot c}(v)),p_{i}\circ F^{\bot} (v)+u(p_{i}\circ F^{\top c}(v))].
   \end{equation}
  But now, recalling that $F^{\top}, F^{\bot}$ are holomorphic stem functions, then, we have that $f$ is slice regular if and only if $\tilde{f}$ is a holomorphic map.
\end{proof}

\begin{remark}\label{usuallift}
Starting with a slice regular function $f$, one can repeat the computations in the following way
 \begin{equation*}
  \begin{array}{rcl}
   [1,f(q)] & = & [1,f(Q_u^{-1}(\alpha+i\beta)Q_u)]\\
   & = & [1,f(\alpha+Q_u^{-1}(i)Q_u\beta)]\\
   & = & [1,F_1(\alpha+i\beta)+Q_u^{-1}(i)Q_uF_2(\alpha+i\beta)]\\
   & = & [Q_u,Q_uF_1(\alpha+i\beta)+iQ_uF_2(\alpha+i\beta)]\\
   & = & [1+uj,(1+uj)F_1(\alpha+i\beta)+i(1+uj)F_2(\alpha+i\beta)]\\
   & = & [1+uj,f(\alpha+i\beta)+ujf(\alpha-i\beta)]\\
   & = & [1+uj,f(v)+ujf(\bar v)].
  \end{array}
 \end{equation*}
At this point, using the splitting in Lemma~\ref{splitting}, we can  write $f_i(v)=G(v)+H(v)j$, where
$G,H: D_i\rightarrow \mathbb{C}_i$ are holomorphic functions. 
Now, if $D_{i}\cap\mathbb{R}=\emptyset$, then the behaviours of $G$ and $H$ over
$D_{i}^{+}$ and $D_{i}^{-}$ are, in general, unrelated. We write then
\begin{equation*}
G(v):=\begin{cases}
g(v) & v\in D_{i}^{+}\\
\overline{\hat{g}(\bar v)} & v\in D_{i}^{-}
\end{cases},\quad H(v):=\begin{cases}
h(v) & v\in D_{i}^{+}\\
\overline{\hat{h}(\bar v)} & v\in D_{i}^{-},
\end{cases}
\end{equation*}
where $g,\hat{g},h,\hat{h}$ are holomorphic functions defined on $D_{i}^{+}$.
This is done because, in the lift, the variable $v$ belongs to $\mathbb{C}^{+}$ and
can be done because $D$ is symmetric with respect to the real axis.
Note that, since $\hat{g}$ and $\hat{h}$ are holomorphic functions, then the two functions
$v\mapsto \overline{\hat{g}(\bar v)}$, $v\mapsto\overline{\hat{h}(\bar v)}$ are holomorphic as well.
%
This particular choice is made in order to let the result compatible with the one in~\cite{gensalsto}.
Coming back to our computations we get,
\begin{equation*}
 ujf(\alpha-i\beta)=u(\overline{G(\bar v)}j-\overline{H(\bar v)}),
\end{equation*}
and since $v\in\mathbb{C}^{+}$, then $\overline{G(\bar v)}=\overline{\overline{\hat{g}(\overline{\overline{ v}})}}=\hat{g}(v)$ (and analogously for $H$), hence,
 \begin{equation}\label{twistorsplitting}
  \begin{array}{rcl}
   [1+uj,f(v)+ujf(\bar v)] & = & [1+uj,g(v)+h(v)j-u\hat{h}(v)+u\hat{g}(v)j]\\
   & = & \pi[1,u,g(v)-u\hat{h}(v),h(v)+u\hat{g}(v)],
  \end{array}
 \end{equation}
and so the lift coincide with the one computed in~\cite{gensalsto}. 

Finally, observe that if a slice regular function is defined over a domain which intersects the real line,
then (as implicitly stated in~\cite{genstostru}), $\hat{g}=\overline{g(\bar v)}$ and analogously for $h$.
\end{remark}

\begin{remark}
 It will be useful to notice that the twistor lift of a slice regular 
 function is always a rational map over its image.
\end{remark}

Thanks to the Representation Formula for slice functions, exhibiting a slice function
is equivalent to exhibit its defining stem function or its splitting over a complex plane $\mathbb{C}_I$ for some $I\in\mathbb{S}$.
With this in mind, in the next proofs and constructions we will define $G$ and $H$ starting from Equation~\eqref{twistorsplitting}.
In particular, given a slice regular function $f:\Omega_D\rightarrow \mathbb{H}$ which splits over $D_i$ as $f(v)=G(v)+H(v)j$, it holds:
\begin{equation*}
 \begin{array}{rcl}
  f(\alpha+I\beta) & = & \frac{1}{2}[f(v)+f(\bar v)-Ii(f(v)-f(\bar v))]\\
  & = & \frac{1}{2}[G(v)+H(v)j+G(\bar v)+H(\bar v)j-Ii(G(v)+H(v)j-G(\bar v)-H(\bar v)j)]\\
  & = & \frac{1}{2}[(1-Ii)(g(v)+h(v)j)+(1+Ii)(\overline{\hat{g}(v)}+\overline{\hat{h}(v)}j)],
 \end{array}
\end{equation*}
where $\alpha+I\beta\in\Omega_D\setminus\mathbb{R}$ and $v=\alpha+i\beta$, with $\beta>0$.

Given a slice regular function $f$ we will say that its twistor lift
$\tilde{f}$ lies on a certain variety $\mathcal{S}$ if the image of 
$\tilde{f}$ is contained in 
$\mathcal{S}$.
\subsection{Planes}

Here we will show that, given a hyperplane in $\mathbb{CP}^{3}$, then, the only
non-constant slice regular functions  that arise in our constructions are functions that do not extend
to the real line.
Take in fact a generic hyperplane given by the equation:
\begin{equation*}
c_{0}X_{0}+c_{1}X_{1}+c_{2}X_{2}+c_{3}X_{3}=0.
\end{equation*}
Substituting the coordinates in Equation~\eqref{twistorsplitting} in the previous equation
we get
\begin{equation*}
c_{0}+c_{1}u+c_{2}(g(v)-u\hat{h}(v))+c_{3}(h(v)+u\hat{g}(v))=0.
\end{equation*}
The left hand side of the last equation is, of course, a linear polynomial in $u$, so,
the equality holds if and only if the next system is satisfied,
\begin{equation}\label{eqplane}
\begin{cases}
c_{0}+c_{2}g(v)+c_{3}h(v)=0\\
c_{1}-c_{2}\hat{h}(v)+c_{3}\hat{g}(v)=0.
\end{cases}
\end{equation}
Due to the nature of the lift we have to suppose that at least one between $c_{2}$ and 
$c_{3}$ is different from zero. Say, then $c_{3}\neq 0$ (the case $c_{2}\neq 0$ is
obviously symmetric). Then, the last system becomes, 
\begin{equation*}
\begin{cases}
h(v)=-c_{3}^{-1}(c_{0}+c_{2}g(v))\\
\hat{g}(v)=-c_{3}^{-1}(c_{1}-c_{2}\hat{h}(v)).
\end{cases}
\end{equation*}

Using now the Representation Formula~\eqref{repreform}, we can define the following slice regular
function $f:\mathbb{H}\setminus\mathbb{R}\rightarrow\mathbb{H}$, as
\begin{equation*}
f(\alpha+I\beta)=\frac{(1-Ii)}{2}(g(v)-c_{3}^{-1}(c_{0}+c_{2}g(v))j)+\frac{(1+Ii)}{2}(
-\overline{c_{3}^{-1}(c_{1}-c_{2}\hat{h}( v))}+\overline{\hat{h}( v)}j).
\end{equation*}

If this function extends to $\mathbb{R}$, then $\hat{g}(v)=\overline{g(\bar v)}$
and $\hat{h}(v)=\overline{h(\bar v)}$. But if this is true, then from the system in Equation~\ref{eqplane} we get,
\begin{equation*}
\hat{h}(v)=\overline{-c_{3}^{-1}c_{0}}+\overline{-c_{3}^{-1}c_{2}}\hat{g}(v),
\end{equation*}
and substituting this in the second equation we obtain that $\hat{g}$ is equal to some
constant complex number. Hence, the only way to obtain a slice regular function
that extends to the real line is to take a constant function.

\subsection{Quadrics}

Thanks only to the general shape of the lift given in Equation~\eqref{lift},
we are able to prove the following result.

\begin{theorem}\label{realquad}
 Let $f:\mathbb{H}\setminus\mathbb{R}\rightarrow \mathbb{H}$ be a slice regular function. 
 Then its twistor lift lies over the quadric in Formula~\eqref{quadric} if and only if $f$ is a real slice function.
\end{theorem}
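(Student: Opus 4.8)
The plan is to substitute the explicit coordinates of the twistor lift given in Equation~\eqref{lift} into the defining equation of the quadric and read off the constraints it imposes on the inducing stem function. Writing the lift as $\tilde f[1,u,v,uv]=[X_0,X_1,X_2,X_3]$ with $X_0=1$, $X_1=u$, $X_2=p_i(F^{\top}(v))-u\,p_i(F^{\bot c}(v))$ and $X_3=p_i(F^{\bot}(v))+u\,p_i(F^{\top c}(v))$, the membership condition $X_0X_3=X_1X_2$ from~\eqref{quadric} reads $X_3-uX_2=0$. For each fixed $v$ this is a polynomial identity in the fibre variable $u$, and since the lift sweeps the whole fibre as $u$ ranges over $\mathbb{CP}^1$, I would equate the coefficients of $u^0$, $u^1$ and $u^2$ separately, obtaining the three conditions
\begin{equation*}
p_i(F^{\bot}(v))=0,\qquad p_i(F^{\top c}(v))=p_i(F^{\top}(v)),\qquad p_i(F^{\bot c}(v))=0,
\end{equation*}
valid for every $v\in D$.

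The delicate point is that $p_i:\mathbb{C}_i\otimes_{\mathbb{R}}\mathbb{C}\to\mathbb{C}_i$ is not injective, so a single vanishing condition does not by itself force the corresponding stem component to vanish; this is where the conditions coming from $u^0$ and $u^2$ must be used together. Decomposing $F^{\top}=F_1^{\top}+\sqrt{-1}F_2^{\top}$ and $F^{\bot}=F_1^{\bot}+\sqrt{-1}F_2^{\bot}$ into $\mathbb{C}_i$-valued components, I would write $p_i(F^{\bot})=F_1^{\bot}+iF_2^{\bot}$ and $p_i(F^{\bot c})=F_1^{\bot c}+iF_2^{\bot c}$. Conjugating the second vanishing condition in $\mathbb{C}_i$ and combining it with the first gives $F_1^{\bot}=F_2^{\bot}=0$, that is $F^{\bot}\equiv 0$; equivalently $f=\mathcal{I}(F^{\top})$ sends $D_i$ into $\mathbb{C}_i$. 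Expanding the middle condition $p_i(F^{\top c})=p_i(F^{\top})$ in the same way and separating the $1$-part from the $i$-part forces the $\mathbb{C}_i$-imaginary parts of $F_1^{\top}$ and $F_2^{\top}$ to vanish, i.e.\ $F_1^{\top}$ and $F_2^{\top}$ are real-valued. Since $F^{\bot}\equiv 0$ the stem function of $f$ is exactly $F_1^{\top}+\sqrt{-1}F_2^{\top}$, so both components of $F$ are real and $f$ is a real slice function by definition.

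For the converse I would run this chain backwards: if $f$ is real then $F_1,F_2$ are $\mathbb{R}$-valued, which in the decompositions $F_1=F_1^{\top}+F_1^{\bot}j$ and $F_2=F_2^{\top}+F_2^{\bot}j$ forces $F^{\bot}\equiv 0$ and makes $F_1^{\top}=F_1$, $F_2^{\top}=F_2$ real. The three coefficient conditions then hold trivially, since the $\mathbb{C}_i$-conjugations act as the identity on real values, whence $X_3-uX_2\equiv 0$ and the lift lies on $\mathcal{Q}$. I expect the only genuine work to be the bookkeeping of the second paragraph, namely extracting the vanishing of $F^{\bot}$ and the reality of $F^{\top}$ from conditions phrased through the non-injective map $p_i$; the coefficient comparison in $u$ and the converse are then routine.
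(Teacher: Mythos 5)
Your proposal is correct and follows essentially the same route as the paper's proof: substitute the lift coordinates of Equation~\eqref{lift} into $X_0X_3=X_1X_2$, compare coefficients of $u^0,u^1,u^2$ to obtain the system $p_i\circ F^{\bot}=0=p_i\circ F^{\bot c}$ and $p_i\circ F^{\top c}=p_i\circ F^{\top}$, and deduce $F^{\bot}\equiv 0$ together with the reality of $F_1^{\top},F_2^{\top}$. Your explicit combination of the $u^0$ and $u^2$ conditions to handle the non-injectivity of $p_i$ is if anything a little more detailed than the paper, which at that step simply invokes the two equations and the Representation Formula; the content is identical.
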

\begin{proof}
 Since the parameterization of the lift $\tilde{f}$ is given by Equation~\eqref{lift}, then the condition of
 lying on the quadric given in Formula~\eqref{quadric} is encoded by the following system of equations

 \begin{numcases}{}
\label{real1} p_{i}\circ F^{\bot c}=0=p_{i}\circ F^{\bot}\\
\label{real2} p_{i}\circ F^{\top c}=p_{i}\circ F^{\top},
\end{numcases}
and so the slice regular function $f$ with lifting equal to $\tilde{f}$ can be constructed, thanks to Equation~\eqref{real1}, to be equal to,
\begin{equation*}
 f(\alpha+I\beta)=f^{\top}(\alpha+I\beta)=F^{\top}_{1}(\alpha+i\beta)+IF^{\top}_{2}(\alpha+i\beta).
\end{equation*}
But, thanks to Equation~\eqref{real2} we have that 
\begin{equation*}
F^{\top}_{1}(\alpha+i\beta)+IF^{\top}_{2}(\alpha+i\beta)=F^{\top c}_{1}(\alpha+i\beta)+IF^{\top c}_{2}(\alpha+i\beta),
\end{equation*}
which implies that both $F^{\top}_{1}, F^{\top}_{2}$ are real functions and so $f$ is real. 

The converse is trivial.
\end{proof}

Now, the next result states that every non-singular quadric in the previous classification (see Theorem~\ref{classification}), can be reached by the lift of a slice regular function.

\begin{theorem}\label{thmquadric}
 For any non-singular quadric in the classification of Theorem~\ref{classification} there is an  equivalent one $\mathcal{Q}$ for which there exists a slice regular function 
 $f$ defined on a dense subset of $\mathbb{H}\setminus\mathbb{R}$, such that its twistor lift lies in $\mathcal{Q}$.
\end{theorem}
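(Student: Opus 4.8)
The plan is to reduce, via Theorem~\ref{classification}, to one of the two normal forms~\eqref{quaddiag} or~\eqref{quadnondiag}, and then to substitute the general expression~\eqref{twistorsplitting} of the lift directly into the defining polynomial. Writing the lift as
\[
\tilde{f}[1,u,v,uv]=[1,\,u,\,g(v)-u\hat{h}(v),\,h(v)+u\hat{g}(v)],
\]
its composition with a quadric is a polynomial of degree two in the fibre coordinate $u$. Since the lift must lie in the quadric along every fibre, this polynomial has to vanish identically in $u$, so its three coefficients (of $u^0$, $u^1$, $u^2$) give three equations in the four holomorphic functions $g,h,\hat{g},\hat{h}$. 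The problem is thereby reduced to exhibiting holomorphic solutions of this system, exactly as was done for hyperplanes in the previous subsection, and I would choose as the representative $\mathcal{Q}$ the normal form itself, which is conformally equivalent to the prescribed quadric by Theorem~\ref{classification}.

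For the diagonal form~\eqref{quaddiag}, abbreviating the four coefficients by $a,b,c,d$, the coefficients of $u^0$ and $u^2$ yield the conic relations $a+cg^2+dh^2=0$ and $b+c\hat{h}^2+d\hat{g}^2=0$, while the coefficient of $u^1$ gives the bilinear relation $cg\hat{h}=dh\hat{g}$. Since $a,b,c,d$ are nonzero, I would treat $g$ as a free holomorphic function: the first relation expresses $h$ through $g$, and a short manipulation using that relation collapses the remaining two to $\hat{g}=\pm\sqrt{bc/(ad)}\,g$ and $\hat{h}=\pm\sqrt{bd/(ac)}\,h$. For the non-diagonal form~\eqref{quadnondiag} the system is even simpler, namely $h=i-kg$, $\hat{h}=i+k\hat{g}$ and $(1-k^2)(g-\hat{g})+2ki=0$; here $k\in[0,1)$ guarantees $1-k^2\neq 0$, so again $g$ is free and $h,\hat{g},\hat{h}$ are determined. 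To keep all four functions holomorphic without invoking a global square root, in the diagonal case I would parametrize the conic $cg^2+dh^2=-a$ rationally, making $g,h$ rational in an auxiliary parameter $t$, and then set $t=t(v)$ to be a nonconstant holomorphic function such as $t(v)=v$.

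Once holomorphic $g,h,\hat{g},\hat{h}$ are in hand, I would reconstruct the function itself through the recipe of Remark~\ref{usuallift}: declare $f_i=G+Hj$ with $G=g,\,H=h$ on $D_i^+$ and $G(v)=\overline{\hat{g}(\bar v)},\,H(v)=\overline{\hat{h}(\bar v)}$ on $D_i^-$, extending to all slices by the Representation Formula~\eqref{repreform}. By Theorem~\ref{thmlift} the resulting $f$ is slice regular precisely because $g,h,\hat{g},\hat{h}$ are holomorphic, and its lift is by construction the chosen $\tilde{f}$, which lies in $\mathcal{Q}$. The density statement is then forced by the parametrization: the poles of the rational parametrization (for $t(v)=v$, the circular set over $v=i$, that is the sphere $\mathbb{S}$ of imaginary units, where $1+v^2=0$) must be deleted, so $f$ is defined only on the complement of a thin circular set, which is dense in $\mathbb{H}\setminus\mathbb{R}$.

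The main obstacle I anticipate is not the algebra of the three coefficient equations, which is elementary, but verifying that the chosen data genuinely assemble into a well-defined slice regular function rather than merely solving the equations formally: one must ensure the branch and pole choices keep $g,h,\hat{g},\hat{h}$ holomorphic on a common dense domain $D_i^+$, and check that $f$ is nonconstant, so that its lift is a true surface inside $\mathcal{Q}$ and not a single fibre. For the special case $\lambda=\mu=\nu=0$ of~\eqref{quaddiag}, which is the type~(1) quadric~\eqref{quadric}, I would cross-check against Theorem~\ref{realquad}: any nonconstant real slice regular function already has its lift on that quadric, giving an independent confirmation that the construction is sound.
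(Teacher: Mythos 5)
Your proposal is correct and follows essentially the same route as the paper: substitute the splitting form of the lift into the normal form of the quadric, expand in the fibre coordinate $u$, impose the vanishing of the three coefficients, solve the resulting system for $g,h,\hat{g},\hat{h}$ (obtaining in the diagonal case exactly the proportionality $\hat{g}=\pm e^{\mu-\lambda}g$), and reconstruct $f$ via the Representation Formula. The only differences are cosmetic refinements: you propose a rational parametrization of the conic $cg^2+dh^2=-a$ in place of the paper's explicit square-root branch (which is precisely what forces the ``dense subset'' caveat in the statement), and you solve the non-diagonal case as a linear system in closed form rather than quoting a particular solution.
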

\begin{proof}
 For all the  cases we
 will show the thesis by exhibiting the splitting of $f$.
 \begin{enumerate}
 \item If $\mathcal{Q}$ is given as in Equation~\eqref{quaddiag}, then it translates in 
 set of solutions of
 \begin{equation*}
 e^{\lambda+i\nu}+e^{-\lambda+i\nu}u^{2}+e^{\mu-i\nu}(g(v)-u\hat{h}(v))^{2}+e^{-\mu-i\nu}
 (h(v)+u\hat{g}(v))^{2}=0.
 \end{equation*}
Writing the previous equation as a polynomial in $u$ and imposing the vanishing of
the coefficients we obtain the following system
\begin{equation*}
\begin{cases}
e^{\lambda+i\nu}+e^{\mu-i\nu}g^{2}+e^{-\mu-i\nu}h^{2}=0\\
-e^{\mu}g\hat{h}+e^{-\mu}h\hat{g}=0\\
e^{-\lambda+i\nu}+e^{\mu-i\nu}\hat{h}^{2}+e^{-\mu-i\nu}\hat{g}^{2}=0
\end{cases}
\end{equation*}
From the first and the last equations we obtain
\begin{equation*}
h^{2}=-e^{\mu+i\nu}(e^{\lambda+i\nu}+e^{\mu-i\nu}g^{2}),\quad \hat{h}^{2}=-e^{-\mu+i\nu}(e^{-\lambda+i\nu}+e^{-\mu-i\nu}\hat{g}^{2}).
\end{equation*}
Take now the square of  second equation and substitute the values of $h^{2}$ and $\hat{h}^{2}$:
\begin{equation*}
e^{\mu}g^{2}(e^{-\lambda+i\nu}+e^{\mu-i\nu}\hat{g}^{2})=e^{-\mu}\hat{g}^{2}(e^{\lambda+i\nu}+e^{\mu-i\nu}g^{2}),
\end{equation*}
that is
\begin{equation*}
\hat{g}=\pm e^{\mu-\nu}g.
\end{equation*}
Taking now, for instance, $g(v)=v$, $\hat{g}(v)=e^{\mu-\nu}v$, $h=i(e^{\mu+i\nu}(e^{\lambda+i\nu}+e^{\mu-i\nu} g^{2}))^{1/2}$ and  $\hat{h}=i(e^{-\mu+i\nu}(e^{-\lambda+i\nu}+e^{-\mu-i\nu}\hat{g}^{2}))^{1/2}$, we get the thesis in the first case.

%

\item The last case is when $\mathcal{Q}$ is the zero locus of the polynomial in Equation~\eqref{quadnondiag} with $k\in[0,1)$.
Imposing then the usual equations we obtain that $g,h:\mathbb{C}_i\setminus\mathbb{R}\rightarrow\mathbb{C}$
and $\hat{g},\hat{h}:\mathbb{C}_i^-\setminus\mathbb{R}\rightarrow\mathbb{C}$
can be chosen as

\begin{equation*}
 g(v)=-\hat{g}(v)=v,\quad h(v)=2i+v/2,\quad  \hat{h}(v)=2i-v/2.
\end{equation*}
 \end{enumerate}
It is now a matter of computation, using the Representation Formula, to write the slice regular functions defined by the previous three cases. 
\end{proof}
%
Given the parameterizations contained in the previous proof, in the next section we will compute,
for each quadric in the classification of Theorem~\ref{classification}, the sets of points where their possible twistor lines lie.
%

In the next theorem we will show that, up to projective transformations, the only non-singular algebraic surface that can be parametrized by the twistor lift of a slice regular function, is exactly $\mathcal{Q}$.
Some suspects that a result of this kind must hold came from the fact that there are not
\textit{dominant rational maps}\footnote{Meaning a rational map with dense image.} from $\mathcal{Q}$ to any smooth variety of degree $d\geq 4$.
In fact,  any smooth quadric in $\mathbb{CP}^3$ is projectively isomorphic to $\mathcal{Q}$ (see, for instance,~\cite[Section 4]{griffiths}).
  Now, if $X\rightarrow Y$ is a 
 dominant rational map between non-singular varieties in $\mathbb{CP}^{3}$, then
  $dim H^0(Y,K_Y)\leq dim H^0(X,K_X)$, where $K_X$ and $K_{Y}$ stand for the canonical bundles of the subscript variety (see~\cite[Chapter 2, Section 8]{hartshorne}).
But   $dim H^0(\mathcal{S},K_\mathcal{S})$
 is  greater or equal to 1 when the degree of $\mathcal{S}$ is greater or equal to 4 and it is 0 when $d=2,3$. 
%
  
To be more precise we have the following.
\begin{theorem}

 Let $\mathcal{S}$ be a non-singular algebraic surface of degree $d\geq 2$ in $\mathbb{CP}^3$ and let $\tilde{f}:\mathcal{Q}^+\rightarrow \mathcal{S}$ be
 the twistor lift of a slice regular function and such that $\tilde{f}(\mathcal{Q}^+)$ is open in $\mathcal{S}$. Then $\mathcal{S}$
 is projectively equivalent to $\mathcal{Q}$.

\end{theorem}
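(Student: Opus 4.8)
The plan is to read the ruling structure of $\mathcal{S}$ directly off the explicit lift in Equation~\eqref{lift} and then bound $d$ by a self-intersection computation on $\mathcal{S}$. First I would recall that $\mathcal{Q}^+$ lies inside $\mathcal{Q}\cong\mathbb{CP}^1\times\mathbb{CP}^1$ with coordinates $[1,u,v,uv]$, and that for each fixed $v\in\mathbb{C}^+$ the set $\{[1,u,v,uv]\mid u\in\mathbb{CP}^1\}$ is one of the two rulings of $\mathcal{Q}$, entirely contained in $\mathcal{Q}^+$. Inspecting Equation~\eqref{lift}, every homogeneous coordinate of $\tilde f[1,u,v,uv]$ is affine-linear in $u$, with coefficients depending holomorphically on $v$. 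Hence, for fixed $v$, the image $L_v:=\tilde f(\{[1,u,v,uv]\mid u\})$ is a projective line in $\mathbb{CP}^3$, and $L_v\subset\mathcal{S}$.

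Next I would exploit the hypothesis that $\tilde f(\mathcal{Q}^+)$ is open in $\mathcal{S}$. This forces the image to be two-dimensional, so the lines $L_v$ cannot all coincide: as $v$ ranges over $\mathbb{C}^+$ they sweep out a dense subset of $\mathcal{S}$, presenting $\mathcal{S}$ as a surface ruled by the one-parameter family $\{L_v\}$, whose general members are pairwise distinct.

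The degree bound then comes from adjunction on $\mathcal{S}$. A smooth surface of degree $d$ in $\mathbb{CP}^3$ has $K_{\mathcal{S}}=\mathcal{O}_{\mathcal{S}}(d-4)$, so any line $L\subset\mathcal{S}$ satisfies $-2=L^2+L\cdot K_{\mathcal{S}}=L^2+(d-4)$, i.e. $L^2=2-d$. Since the $L_v$ move in a family covering $\mathcal{S}$, two distinct general members are disjoint or meet in finitely many points, so $L_v\cdot L_{v'}\geq 0$; as these intersection numbers are constant in the family, $L_v^2=L_v\cdot L_{v'}\geq 0$, giving $2-d\geq 0$, that is $d\leq 2$. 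This not only recovers but sharpens the canonical-bundle estimate $d\leq 3$ quoted before the statement, since the moving lines also exclude the cubic case.

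Combining $d\geq 2$ with $d\leq 2$ forces $d=2$, so $\mathcal{S}$ is a non-singular quadric; as recalled above, every smooth quadric in $\mathbb{CP}^3$ is projectively equivalent to $\mathcal{Q}$, which is the claim. I expect the only delicate point to be the inequality $L_v^2\geq 0$: one must ensure the ruling genuinely moves with no fixed line and that general members are not forced to be mutually tangent, both of which follow from the two-dimensionality of $\tilde f(\mathcal{Q}^+)$ guaranteed by its openness in $\mathcal{S}$.
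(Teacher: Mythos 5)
Your argument is correct, and it shares its first half with the paper's proof: both start by reading off from Equation~\eqref{lift} that the coordinates of $\tilde f[1,u,v,uv]$ are affine--linear in $u$, so that for each fixed $v$ the image contains an entire projective line $L_v$, and both use openness of $\tilde f(\mathcal{Q}^+)$ to see that these lines form a genuinely moving one-parameter family sweeping out $\mathcal{S}$. Where you diverge is in how you rule out $d\geq 3$. The paper simply invokes the classical fact that a non-singular surface of degree $\geq 3$ in $\mathbb{CP}^3$ contains only finitely many lines (the $27$ lines of a cubic, Segre's bound of $64$ for quartics, none in degree $\geq 5$), which is incompatible with the infinite family $\{L_v\}$. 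You instead make this self-contained via adjunction: $K_{\mathcal{S}}=\mathcal{O}_{\mathcal{S}}(d-4)$ gives $L^2=2-d$ for any line $L\subset\mathcal{S}$, while a line moving in a connected family has locally constant homology class, hence $L_v^2=L_v\cdot L_{v'}\geq 0$ for distinct members, forcing $d\leq 2$. Your computation is in effect the standard proof that lines on smooth surfaces of degree $\geq 3$ are rigid, so you are proving the rigidity statement the paper cites rather than quoting it; this buys a cleaner, citation-free argument that treats all degrees $\geq 3$ uniformly (and, as you note, improves on the $h^0(K)$ discussion preceding the theorem, which only excludes $d\geq 4$). The small points you flag --- that $L_v$ is a genuine line and not a point (clear since the first two coordinates of the lift are $[1,u]$), and that the family has no fixed member --- are handled correctly by the openness hypothesis.
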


\begin{proof}
%
Observe that for each fixed $v_{0}$ in
 $\mathbb{CP}^{1}$, the twistor lift $\tilde{f}$ of a generic slice regular function $f$, contains
  the whole line $l_{v_{0}}:\mathbb{CP}^{1}\rightarrow \mathbb{CP}^{3}$ parametrized by $u\in\mathbb{CP}^{1}$. In formula
 \begin{equation*}
  l_{v_{0}}[1,u]=[1,u,f^{\top}(v_{0})-uf^{\bot c}(v_{0}),f^{\bot} (v_{0})+uf^{\top c}(v_{0})].
   \end{equation*}
 This is enough to prove the theorem since, from general facts about projective surfaces, we know that the number 
 of lines over a non-singular surface of degree greater or equal to 3 in $\mathbb{CP}^3$ is always finite.\end{proof}

\begin{remark}
The theory of lines or, in general, of rational curves over a surface is a very interesting and studied 
 field. In particular we point out that several further properties are stated about the nature
 of rational curves that can lie over a surface. Among the others we found~\cite[Theorem 1.1]{clemens} 
 and~\cite[Theorem 1]{xu},
 in which the authors state general formulas that imply that surfaces of degree greater
 or equal to 5 contain no lines.
For lower degrees we have that
the number of lines lying on a non-singular cubic surface is exactly $27$ (see e.g.~\cite{dolgacevcremona}),
while in the classical paper~\cite{segre} by Segre it is stated that the maximum number of lines lying on a
 non-singular quartic surface is $64$.

\end{remark}

\begin{remark}
 The case studied in~\cite{gensalsto} gave rise to a quartic ruled surface and so it is coherent with our last result.
\end{remark}

After the last result one can search for classes of singular varieties that can be reached by the twistor lift
of a slice regular function. Of course, since the argument of the proof is general, 
one can exclude from this classification all the surfaces which are not ruled by lines.
And so, we obtain the following theorems.

\begin{theorem}
Up to projective transformations, any quadric surface $\mathcal{Q}\subset\mathbb{CP}^3$ is such that
there exists a slice regular function $f$ such that its 
 twistor lift $\tilde{f}$ lies on $\mathcal{Q}$.
\end{theorem}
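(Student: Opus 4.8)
The plan is to exploit the fact that, over $\mathbb{C}$, the rank of the associated symmetric matrix is a complete projective invariant of a quadric in $\mathbb{CP}^3$, so that up to projective transformation there are exactly four cases: rank $4$ (smooth quadrics), rank $3$ (cones), rank $2$ (pairs of distinct planes) and rank $1$ (double planes). The rank-$4$ case is already settled: every smooth quadric is projectively equivalent to the twistor quadric $\mathcal{Q}$ of Equation~\eqref{quadric}, and Theorem~\ref{realquad} shows that the lift of any real slice function (for instance the identity) lies on $\mathcal{Q}$; alternatively this is exactly the content of Theorem~\ref{thmquadric}. Thus only the singular classes require new input, and for each of them I would produce an explicit slice regular function by the mechanism already used in the proofs of Theorem~\ref{thmquadric} and of the Planes subsection, namely substituting the parametrisation~\eqref{twistorsplitting} of the lift into the defining equation and reading the result as a polynomial in $u$ whose coefficients must all vanish.

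For the rank-$3$ class I would fix the projective representative $X_0X_2-X_3^2=0$, a cone with vertex $[0,1,0,0]$. Substituting the coordinates $X_0=1$, $X_1=u$, $X_2=g(v)-u\hat h(v)$, $X_3=h(v)+u\hat g(v)$ from~\eqref{twistorsplitting} and collecting powers of $u$, the vanishing of the $u^2$- and $u^1$-coefficients is forced by the position of the vertex, which imposes $\hat g\equiv\hat h\equiv 0$, while the $u^0$-coefficient yields the single conic relation $g=h^2$ between the two remaining free functions. Choosing $h(v)=v$ and $g(v)=v^2$ gives holomorphic splitting data, hence a genuine slice regular function $f$ on $\mathbb{H}\setminus\mathbb{R}$ whose lift is $[1,u,v^2,v]$; this manifestly lies on the cone and sweeps out a two-dimensional image. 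As in the Planes subsection, the resulting $f$ does not extend to $\mathbb{R}$, in agreement with the general philosophy of the paper.

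The rank-$2$ and rank-$1$ classes I would reduce to the Planes subsection. A pair of distinct planes is projectively equivalent to $X_2X_3=0$ and a double plane to $X_2^2=0$; in both cases the quadric contains the plane $X_2=0$, whose coefficient vector $(c_0,c_1,c_2,c_3)=(0,0,1,0)$ has $c_2\neq 0$, so it is reachable by a lift. Indeed the system~\eqref{eqplane} then collapses to $g\equiv 0$, $\hat h\equiv 0$ with $h,\hat g$ free, and any such choice produces a slice regular function whose lift lies on $X_2=0$ and hence on the ambient quadric. Selecting the representative with $c_2\neq 0$ is essential, since the lift can never lie on a plane with $c_2=c_3=0$; this is precisely where the freedom of working up to projective transformations is used.

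The step I expect to be the only real obstacle is the cone: unlike the other cases the vertex cannot be placed arbitrarily, and a careless normal form, for example $X_0X_2=X_1^2$ with vertex $[0,0,0,1]$, makes the $u^2$-coefficient an inescapable nonzero constant, forcing a contradiction in the coefficient system and admitting only the degenerate lift that collapses to a single line. The geometric reason is that every line $l_v$ of the ruling must pass through the vertex, so the vertex must be chosen among the points that the ruling lines in~\eqref{twistorsplitting} can actually contain; verifying that the representative $X_0X_2-X_3^2=0$ meets this constraint, and therefore yields non-constant holomorphic splitting data rather than a contradiction, is the crux of the argument.
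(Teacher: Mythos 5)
Your proposal is correct and follows essentially the same strategy as the paper: reduce to projective normal forms and, in each singular class, produce explicit holomorphic splitting data $g,h,\hat g,\hat h$ by substituting the parametrization~\eqref{twistorsplitting} into the defining equation and killing the coefficients of the resulting polynomial in $u$. The only divergences are in the (equally valid) choice of representatives --- the paper takes the cone $X_1^2=X_2X_3$ with vertex $[1,0,0,0]$, so that $g\equiv h\equiv 0$ while $\hat g,\hat h$ are nontrivial, the mirror of your solution, and the pair of planes $X_0^2-X_2^2=0$ with an explicit function instead of your reduction to the system~\eqref{eqplane} --- together with your extra coverage of the rank-one (double plane) case and your explicit identification of the admissible vertex positions, a constraint the paper only acknowledges in the remark preceding its proof.
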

In the proof of this theorem, we will choose a particular union of two planes and a particular cone. Since the classification is projective, 
this is enough to complete all the possible cases. If one is interested in  singular quadric surfaces 
defined by different equations it may be possible to find no slice regular function whose lift realizes
the chosen equation.

\begin{proof}
The smooth case is solved thanks to Theorem~\ref{realquad} and by the fact that all non-singular 
quadric are projectively equivalent.
 Up to projective transformations there are only two classes of singular quadric surfaces: the union of two planes and cones.
 We will show that there is a cone and a union of two planes that can be described with coordinates in accordance
 with Equation~\eqref{twistorsplitting}.
 \begin{enumerate}
  \item Let $\mathcal{P}$ be the union of two planes defined by the following equation
  \begin{equation*}
   X_0^2-X_2^2=0.
  \end{equation*}
The slice regular function $f:\mathbb{H}\setminus\mathbb{R}\rightarrow\mathbb{H}$ defined by $f(\alpha+I\beta)=(\alpha+I\beta)(1-Ii)\frac{j}{2}$
lifts as $\tilde{f}[1,u,v,uv]=[1,u,1,v]$ and so lies it in $\mathcal{P}$.
\item Let $\mathcal{K}$ be the quadratic cone defined by the following equality
  \begin{equation*}
  X_1^2=X_2X_3
  \end{equation*}
Imposing then the usual equations we obtain that $G,H:\mathbb{C}_i\setminus\mathbb{R}\rightarrow\mathbb{C}_{i}$
can be chosen as
\begin{equation*}
 G(v)=\begin{cases}
                     0 & \mbox{ if }v\in\mathbb{C}_i^+\\
                     v & \mbox{ if }v\in\mathbb{C}_i^-.
                   \end{cases} ,\quad H(v)=\begin{cases}
                     0 & \mbox{ if }v\in\mathbb{C}_i^+\\
                     -\frac{1}{v} & \mbox{ if }v\in\mathbb{C}_i^-.
                   \end{cases}
\end{equation*}
 \end{enumerate}
As before, it is now a matter of computation, using the Representation Formula, to write the slice regular functions defined by the previous equation. 
\end{proof}

\subsection{Cubics}

We will treat now the case of cubics surfaces. Firstly we will consider \textit{non-normal}
cubics and then cones. An algebraic variety $X$ is said to be \textit{normal} if it is normal at every point, meaning that the local ring at any point is an integrally closed domain. 
If $X$ is a non-normal cubic surface, then its singular locus contains a 1 dimensional part (see~\cite[Chapter 9.2]{dolgacev}).

\begin{theorem}
 Let $\mathcal{C}$ be a non-normal cubic surface in $\mathbb{CP}^3$ that is not a cone. Then, up to projective isomorphisms, there exists a slice regular function $f$ such that its 
 twistor lift $\tilde{f}$ lies on $\mathcal{C}$.
\end{theorem}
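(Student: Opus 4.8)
The plan is to combine the projective classification of non-normal cubic surfaces with the explicit shape of the twistor lift in \eqref{twistorsplitting}. Since $\mathcal{C}$ is non-normal and not a cone, its singular locus is a line and $\mathcal{C}$ is an irreducible ruled cubic, i.e. a cubic scroll; by the classification recalled in \cite[Chapter 9.2]{dolgacev} such a surface is unique up to projective transformation. Because the statement is only up to projective isomorphism, I would first replace $\mathcal{C}$ by a convenient representative, for instance the scroll
\begin{equation*}
\mathcal{C}_0=\{[X_0,X_1,X_2,X_3]\in\mathbb{CP}^3\,\mid\,X_1^2X_3=X_0X_2^2\},
\end{equation*}
and check directly (by computing the partial derivatives) that its singular locus is the line $X_1=X_2=0$, so that $\mathcal{C}_0$ is indeed non-normal and is not a cone.

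Next I would impose that the generic lift lies on $\mathcal{C}_0$. Recall from \eqref{twistorsplitting} that the lift of a slice regular function splitting over $D_i$ as $f=G+Hj$ has the form $\tilde{f}[1,u,v,uv]=[1,u,g(v)-u\hat{h}(v),h(v)+u\hat{g}(v)]$. Substituting $X_0=1$, $X_1=u$, $X_2=g-u\hat{h}$, $X_3=h+u\hat{g}$ into the defining cubic produces a polynomial in $u$ of degree at most three whose coefficients are holomorphic functions of $v$; in the case of $\mathcal{C}_0$ one obtains
\begin{equation*}
\hat{g}\,u^3+(h-\hat{h}^2)\,u^2+2g\hat{h}\,u-g^2=0 .
\end{equation*}
The lift lands on $\mathcal{C}_0$ precisely when every coefficient vanishes identically, i.e. when $\hat{g}=0$, $h=\hat{h}^2$, $g\hat{h}=0$ and $g=0$.

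I would then exhibit an explicit holomorphic solution of this system. Here it forces $g\equiv\hat{g}\equiv 0$ and $h=\hat{h}^2$, so it suffices to pick any nonconstant holomorphic $\hat{h}$, say $\hat{h}(v)=v$ and hence $h(v)=v^2$. The data $g,h,\hat{g},\hat{h}$ are holomorphic on $D_i^+$, so by the Splitting Lemma~\ref{splitting} and the Representation Formula~\eqref{repreform} they assemble into a genuine slice regular function $f:\mathbb{H}\setminus\mathbb{R}\to\mathbb{H}$ (not defined over the reals) whose lift is $\tilde{f}[1,u,v,uv]=[1,u,-uv,v^2]$. This map depends on both $u$ and $v$, hence its image is a two-dimensional constructible subset of the irreducible surface $\mathcal{C}_0$, so it is dense in $\mathcal{C}_0$; in particular $\tilde{f}$ lies on $\mathcal{C}_0\cong\mathcal{C}$, as required, and one completes the proof by writing out $f$ explicitly via the Representation Formula.

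The main obstacle I anticipate is the interplay between the choice of projective representative and the solvability of the coefficient system: the twistor lift is only affine-linear in the ruling parameter $u$, so the top coefficient (here the $u^3$ term) must be annihilated by the equations, and this constrains which normal forms admit a solution. Thus the delicate point is to select a projective model of the cubic scroll for which the resulting system has a genuinely holomorphic, nondegenerate solution — so that $\tilde{f}$ sweeps out a surface rather than collapsing onto a line — rather than the routine algebra of solving the system once the model is fixed.
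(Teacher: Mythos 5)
Your overall method is the same as the paper's: reduce to a projective normal form, substitute the lift $[1,u,g-u\hat h,h+u\hat g]$ into the cubic, and annihilate the coefficients of the resulting polynomial in $u$. The computation you carry out for the model $X_1^2X_3=X_0X_2^2$ is correct ($g\equiv\hat g\equiv 0$, $h=\hat h^2$, $\hat h(v)=v$ gives the lift $[1,u,-uv,v^2]$, whose image is indeed two-dimensional). However, there is a genuine gap at the very first step: a non-normal cubic surface that is not a cone is \emph{not} unique up to projective transformation. By \cite[Theorem 9.2.1]{dolgacev} --- the result the paper invokes --- there are exactly two projective classes, with representatives $X_0X_3^2+X_1^2X_2=0$ and $X_0X_1X_3+X_2X_3^2+X_1^3=0$ (they are distinguished by the structure of the pinch points on the double line). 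Your model $X_1^2X_3=X_0X_2^2$ becomes $-(Y_0Y_3^2+Y_1^2Y_2)$ after the coordinate change $Y_0=X_0$, $Y_1=X_1$, $Y_2=-X_3$, $Y_3=X_2$, so it is projectively equivalent to the first representative only; your argument therefore proves the theorem just for surfaces in that class, and the quantifier ``let $\mathcal{C}$ be a non-normal cubic surface that is not a cone'' requires both classes to be covered.

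To close the gap you must run the same procedure on the second normal form. Substituting the lift into $X_0X_1X_3+X_2X_3^2+X_1^3$ and equating the coefficients of the powers of $u$ to zero, one admissible solution is $h\equiv 0$, $\hat g(v)=v$, $g(v)=-1/v$, $\hat h(v)=1/v^2$; these are holomorphic on $\mathbb{C}^+$, so they define a slice regular function on a dense subset of $\mathbb{H}\setminus\mathbb{R}$ whose lift $[1,u,-1/v-u/v^2,uv]$ lies on that surface and again sweeps out a two-dimensional image. With this second case added, your proof coincides with the one in the paper.
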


\begin{proof}
 In~\cite[Theorem 9.2.1]{dolgacev}, the author says that, up to projective isomorphisms, the only non-normal cubic surfaces in $\mathbb{CP}^3$ that
 are not cones are the following two:
 \begin{enumerate}
  \item $X_0X_3^2+X_1^2X_2=0$,
  \item $X_0X_1X_3+X_2X_3^2+X_1^3=0$.
 \end{enumerate}
Setting the coordinates of the lift in Remark~\ref{usuallift} in the previous equations we obtain, respectively,
\begin{enumerate}
 \item $g(v)=-v^2$, $\hat{g}(v)=v$ and $h\equiv 0\equiv\hat{h}$
 \item $g(v)=-1/v$, $\hat{g}(v)=v$, $\hat{h}(v)=1/v^2$ and $h\equiv 0$
\end{enumerate}
and so, if $x=\alpha+I\beta\in\mathbb{H}\setminus\mathbb{R}$ and $v=\alpha+i\beta$, the two slice regular functions are, respectively,
\begin{enumerate}
 \item $f_1:\mathbb{H}\setminus\mathbb{R}\rightarrow\mathbb{H}$ defined by 
 \begin{equation}
  f_1(x)=-x^2\frac{(1-Ii)}{2}+x\frac{(1+Ii)}{2},
 \end{equation}
 \item $f_2:\mathbb{H}\setminus\mathbb{R}\rightarrow\mathbb{H}$ defined by 
 \begin{equation*}
  f_2(x)=-x^{-1}\frac{(1-Ii)}{2}+x\frac{(1+Ii)}{2}+x^{-2}\frac{(1+Ii)}{2}j
 \end{equation*}
\end{enumerate}
\end{proof}

The last case that we will treat is the case of cubic cones. The set of cubic cones can be defined by the equation
\begin{equation}\label{cubiccone}
 X_3^3-(c+1)X_3^2X_1+cX_3X_1^2-X_2^2X_1=0,
\end{equation}
where, if $c\in\mathbb{C}\setminus\{0,1\}$, the surface is a cone over a non-singular plane cubic curve, while, in the case in which $c=0,1$
the surface is a cone over a nodal or cuspidal plane, cubic curve, respectively.
\begin{theorem}
 Let $\mathcal{C}$ be a cubic cone. Then there exists a slice regular function $f$ defined on a
 dense subset of $\mathbb{H}\setminus\mathbb{R}$, such that,
 up to projective transformations, its 
 twistor lift $\tilde{f}$ lies on $\mathcal{C}$.
\end{theorem}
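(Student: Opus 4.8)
The plan is to proceed exactly as in the previous cubic and quadric cases: I would parametrize the twistor lift by Equation~\eqref{twistorsplitting} of Remark~\ref{usuallift}, substitute its four homogeneous coordinates into the defining polynomial~\eqref{cubiccone} of the cone $\mathcal{C}$, and read off the conditions on the splitting functions $g,\hat g,h,\hat h$ that force the image to lie on $\mathcal{C}$. Concretely, setting $X_0=1$, $X_1=u$, $X_2=g(v)-u\hat h(v)$ and $X_3=h(v)+u\hat g(v)$ into~\eqref{cubiccone} produces a polynomial of degree three in the fibre variable $u$ whose coefficients are functions of $v$. Since for each fixed $v$ the whole line $l_v$ must be contained in $\mathcal{C}$, the lift lands in $\mathcal{C}$ if and only if all four coefficients vanish identically.

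First I would collect the powers of $u$, whose extreme coefficients are $h^3$ (the $u^0$ term) and $\hat g^3-(c+1)\hat g^2+c\hat g-\hat h^2$ (the $u^3$ term). The vanishing of $h^3$ forces $h\equiv 0$; feeding this into the coefficient of $u$ forces $g\equiv 0$; the coefficient of $u^2$ then vanishes automatically, so the lift collapses to the transparent form $[1,u,-u\hat h(v),u\hat g(v)]$ and the single surviving constraint is
$$\hat h^2=\hat g^3-(c+1)\hat g^2+c\hat g=\hat g(\hat g-1)(\hat g-c).$$
At this point I would simply choose $\hat g(v)=v$, so that the constraint becomes $\hat h(v)^2=v(v-1)(v-c)$, and take $\hat h$ to be a holomorphic branch of $\sqrt{v(v-1)(v-c)}$. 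With $g\equiv h\equiv 0$ the Representation Formula~\eqref{repreform} then yields the explicit slice function
$$f(\alpha+I\beta)=\tfrac12(1+Ii)\bigl(\bar v+\overline{\hat h(v)}\,j\bigr),\qquad v=\alpha+i\beta,\ \beta>0.$$
Since all four splitting functions are holomorphic, $f$ is slice regular and its lift lies on $\mathcal{C}$ by Theorem~\ref{thmlift}. Because~\eqref{cubiccone} is already a projective normal form for the whole family of cubic cones, treating this one equation settles every cubic cone up to projective transformation, including the nodal and cuspidal cases $c=0,1$, where the cubic factors as $\hat g^2(\hat g-1)$ and $\hat g(\hat g-1)^2$ and $\hat h$ is again a single square root.

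The hard part will be nothing more than the well-definedness of $\hat h$: a single-valued holomorphic branch of $\sqrt{v(v-1)(v-c)}$ exists only after the branch points $v=0,1,c$ are excised and, when $c$ lies off the real axis, a slit is added to make the domain simply connected. This is precisely why the statement only asserts $f$ on a \emph{dense} subset of $\mathbb{H}\setminus\mathbb{R}$, in complete analogy with the square roots already employed in the proof of Theorem~\ref{thmquadric}. Once the branch is fixed, the remaining verification — that the resulting $\tilde f$ is holomorphic and that $u^3(\hat g^3-(c+1)\hat g^2+c\hat g-\hat h^2)\equiv 0$ holds on $\mathcal{C}$ — is the routine computation alluded to in the earlier proofs.
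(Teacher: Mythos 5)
Your proposal is correct and follows exactly the paper's strategy: substitute the lift coordinates from Formula~\eqref{twistorsplitting} into Equation~\eqref{cubiccone}, kill the coefficients of the resulting cubic in $u$ to force $g\equiv h\equiv 0$, and reduce to the single constraint $\hat h^{2}=\hat g^{3}-(c+1)\hat g^{2}+c\hat g$, which is solved by a branch of a square root. You actually supply more detail than the paper does (the explicit coefficient analysis, the choice $\hat g(v)=v$, the resulting formula for $f$ via the Representation Formula, and the branch-cut explanation of why $f$ is only defined on a dense subset), all of which is consistent with the paper's terser argument.
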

\begin{proof}
 As in the previous theorems we will prove this result by exhibiting the splitting of the function $f$.
If we impose Equation~\eqref{cubiccone} to hold for the splitting in Formula~\eqref{twistorsplitting}
we obtain that
 $g$ and $h$ must be identically zero while $\hat{g}$ and $\hat{h}$ must satisfy the following equation
 \begin{equation*}
  \hat{g}^3-(c+1)\hat{g}^2+c\hat{g}=\hat{h}^2.
 \end{equation*}
Solving then in $\hat{h}$ or in $\hat{g}$, one finds the desired splitting of the slice regular function that
give the thesis.
\end{proof}

Since, up to projective transformations, the only cubic surfaces that contain infinite lines are cones and the non-normal ones,
then, the projective classification is complete.

Of course, the functions seen in the previous proofs are not the only slice regular functions that solve the problem and give the thesis. One could ask for 
the ``best'' slice regular function such that its lift satisfies a certain algebraic equation, but this issue will not be treated in this paper and we 
 propose it for some future work.

\section{Rational curves on the Grassmannian}\label{ratcurvesec}

The aim of this section is to recover the \textit{twistor transform} defined in~\cite{gensalsto} for
slice regular functions that are not defined on the real line. Moreover at the end we will characterize
certain rational curves over the Grassmannian $\mathbb{G}r_{2}(\mathbb{C}^{4})$.

The non-singular quadric in Equation~\eqref{quadric} is biholomorphic to $\mathbb{CP}^1\times\mathbb{CP}^1$ and the rulings are parametrized by $u$ and $v$. 
A sphere $\alpha+\mathbb{S}\beta$ can be identified with the line,
\begin{equation*}
l_{v_0}:=\{[1,u,\alpha+i\beta,(\alpha+i\beta)u]\,|\,u\in\mathbb{C}\cup \{ \infty \} \}\subset\mathbb{CP}^{3},
\end{equation*}
defined by fixing $v_0=\alpha+i\beta$.
The line $l_{v_0}$ can also be seen as a point in the Grassmannian $\mathbb{G}r_{2}(\mathbb{C}^{4})$ or, equivalently, as 
a point in the Klein quadric in $\mathbb{P}(\bigwedge^{2}\mathbb{C}^{4})\simeq \mathbb{CP}^{5}$ via Pl\"ucker embedding.


As we saw in Section~\ref{twsec}, left multiplication by $j$ on $\mathbb{H}^{2}$ lifts in $\mathbb{C}^{4}$ as 
\begin{equation*}
[X_{0},X_{1},X_{2},X_{3}]\xrightarrow{j\cdot}[-\overline{X_{1}},\overline{X_{0}},-\overline{X_{3}},\overline{X_{2}}],
\end{equation*}
and the last induces a real structure $\sigma$ over $\mathbb{CP}^{5}$ as follows,
\begin{equation*}
\sigma: [\xi_{1},\dots,\xi_{6}]\mapsto[\bar\xi_{1},\bar\xi_{5},-\bar\xi_{4},-\bar\xi_{3},\bar\xi_{2},\bar\xi_{6}],
\end{equation*}
%
where $\{\xi_{1},\dots,\xi_{6}\}$ represent the basis $\{e^{01},e^{02},e^{03},e^{12},e^{13},e^{23}\}$ of $\bigwedge^{2}\mathbb{C}^{4}$ and, of course, $e^{ij}:=e^{i}\wedge e^{j}$.
In the above coordinates we can explicit the equation of the Klein quadric as follows,
\begin{equation}\label{klein}
\xi_{1}\xi_{6}-\xi_{2}\xi_{5}+\xi_{3}\xi_{4}=0.
\end{equation}
As explained previously in Section 3 (and in~\cite[Section 2]{shapiro}), 
\textit{a fixed point of $\sigma$ corresponds to a $j$-invariant line in $\mathbb{CP}^3$, i.e. a (twistor) fibre of $\pi$}.

\begin{example}
Consider the coordinates found in Theorem~\ref{thmquadric} as functions defined on $\mathbb{CP}^{1}\times \mathbb{CP}^{1}$. We want to find the twistor fibre mentioned in the 
previous result by imposing equation $\sigma(\mathcal{F}(v))=\mathcal{F}(v)$. 
\begin{enumerate}
 \item If $\lambda=\mu\neq 0$ and $\nu=\pi/2$ we get, $\mathcal{F}:v\mapsto [1,c(1-v^2)^{1/2},-v,v,\frac{1}{c}(1-v^2)^{1/2},1]$.
 Imposing $\sigma(\mathcal{F}(v))=\mathcal{F}(v)$, we obtain $v=\pm 1$ (i.e. \textit{two twistor lines} in correspondence of $x=\pm 1\in\mathbb{R}$).
 
 \item  If $\lambda=\mu= 0$ and $\nu\in(0,\pi/2)/2$ we get, 
 \begin{equation*}
 \mathcal{F}:v\mapsto [v^2-\frac{e^{2i\nu}+v^2}{||e^{i\nu}||^2},\frac{i}{||e^{i\nu}||}(e^{2i\nu}+v^2)^{1/2},-v,v,\frac{i}{||e^{i\nu}||}(e^{2i\nu}+v^2)^{1/2},1].
 \end{equation*}
 Imposing  $\sigma(\mathcal{F}(v))=\mathcal{F}(v)$, we obtain no solution or \textit{no twistor lines} (this because $\omega$ is a fixed non-real complex number).
 
 \item If $\mathcal{Q}$ is the zero set of the polynomial in Equation~\eqref{quadnondiag}, we get, $\mathcal{F}:v\mapsto [-(\frac{5}{4}v^2+4),2i+\frac{v}{2},-v,-v,2i-\frac{v}{2},1]$.
 Imposing  $\sigma(\mathcal{F}(v))=\mathcal{F}(v)$, we obtain $v=-4i$ (i.e. \textit{one twistor line} in correspondence of $x=-4i\in\mathbb{H}$).
 \end{enumerate}
\end{example}
At this point we can extend the definition given in~\cite{gensalsto} of twistor transform.
\begin{definition}[Twistor transform]
Let $D\subset\mathbb{C}^{+}$ be a domain and  $f:\Omega_{D}\rightarrow\mathbb{H}$ be a slice  function. We  define the  \textit{twistor transform} of $f$ as the following map:
\begin{equation*}
\begin{array}{rrcl}
\mathcal{F}: & D & \rightarrow &\mathbb{G}r_2(\mathbb{C}^{4})\\
& v & \mapsto & \tilde{f}(l_{v}).
\end{array}
\end{equation*}
\end{definition}

The following result extends~\cite[Theorem 5.7]{gensalsto}.

\begin{theorem}
Let $D$ be a domain in $\mathbb{C}^{+}$. If $f:\Omega_{D}\rightarrow \mathbb{H}$ is a slice  function, then its twistor transform $\mathcal{F}$ defines a curve over $D$. 
Moreover, every  curve 
$\gamma: D\rightarrow\mathbb{G}r(\mathbb{C}^{4})$, such that $\xi_{6}\circ \gamma$ is never zero, 
is the twistor transform of a slice  function $f:\Omega_{D}\rightarrow \mathbb{H}$.
The function $f$ is regular if and only if its twistor transform is a holomorphic curve.
\end{theorem}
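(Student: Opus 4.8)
The plan is to make the whole statement explicit through the splitting data $g,h,\hat{g},\hat{h}$ of $f$ over $D$, to read off the Plücker coordinates of the lifted lines, and to deduce the three assertions from one computation together with Theorem~\ref{thmlift}. First I would compute the twistor transform directly: by Remark~\ref{usuallift}, for a fixed $v$ the line $\tilde{f}(l_v)$ is the image of $u\mapsto[1,u,g(v)-u\hat{h}(v),h(v)+u\hat{g}(v)]$, and evaluating at $u=0$ and letting $u\to\infty$ exhibits it as the line spanned by $(1,0,g(v),h(v))$ and $(0,1,-\hat{h}(v),\hat{g}(v))$. Taking the six $2\times 2$ minors in the basis $\{e^{01},e^{02},e^{03},e^{12},e^{13},e^{23}\}$ gives
\begin{equation*}
\mathcal{F}(v)=[\,1,\,-\hat{h}(v),\,\hat{g}(v),\,-g(v),\,-h(v),\,g(v)\hat{g}(v)+h(v)\hat{h}(v)\,].
\end{equation*}
Substituting into~\eqref{klein} yields $\xi_1\xi_6-\xi_2\xi_5+\xi_3\xi_4=(g\hat{g}+h\hat{h})-h\hat{h}-g\hat{g}=0$, so each $\mathcal{F}(v)$ lies on the Klein quadric; since the first coordinate is identically $1$ the two spanning vectors are independent and $\mathcal{F}(v)$ is a genuine point of $\mathbb{G}r_2(\mathbb{C}^4)$. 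Letting $v$ range over $D$ produces the asserted curve, and in particular every twistor transform lies in the affine chart where the first Plücker coordinate is nonzero.

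For the converse I would simply invert this formula. Restricting to that same affine chart — which is where the non-vanishing hypothesis keeps $\gamma=[\xi_1,\dots,\xi_6]$ — and normalizing the leading coordinate to $1$, I set
\begin{equation*}
g:=-\xi_4,\qquad h:=-\xi_5,\qquad \hat{g}:=\xi_3,\qquad \hat{h}:=-\xi_2
\end{equation*}
as four functions on $D$; the Klein relation then forces the remaining coordinate to equal $g\hat{g}+h\hat{h}$, so $\gamma$ is recovered with no loss of information. Because $D\subset\mathbb{C}^+$ contains no real points, there is no compatibility condition tying $(g,h)$ to $(\hat{g},\hat{h})$, and the Representation Formula~\eqref{repreform} assembles these data into a well-defined slice function $f:\Omega_D\to\mathbb{H}$ whose splitting over $D_i$ is $G=g,\,H=h$ on $D_i^+$ together with the conjugate prescription on $D_i^-$. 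Rerunning the first computation on this $f$ returns precisely $\gamma$, so $\gamma$ is its twistor transform.

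The regularity equivalence then follows from the same bookkeeping: by Theorem~\ref{thmlift} the function $f$ is slice regular if and only if its lift $\tilde{f}$ is holomorphic, which by the explicit form in Remark~\ref{usuallift} and the Splitting Lemma~\ref{splitting} happens exactly when $g,h,\hat{g},\hat{h}$ are holomorphic on $D$; since the Plücker coordinates of $\mathcal{F}$ are polynomials in these functions and, conversely, the functions are holomorphic expressions in the normalized Plücker coordinates, this holds precisely when $\mathcal{F}$ is a holomorphic curve. The step I expect to demand the most care is the well-posedness of the converse — checking that the four independently prescribed functions glue, via the Representation Formula, to a single slice function on the entire circular set $\Omega_D$ and that its twistor transform reproduces $\gamma$ and not a reparametrization of it. This is exactly where the hypothesis $D\subset\mathbb{C}^+$, i.e. the absence of real points, is indispensable.
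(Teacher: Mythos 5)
Your proposal is correct and follows essentially the same route as the paper: you compute the Pl\"ucker coordinates of the lifted lines explicitly in terms of the splitting data $g,h,\hat{g},\hat{h}$, invert that formula (using that $D\subset\mathbb{C}^+$ lets the pairs $(g,h)$ and $(\hat{g},\hat{h})$ be prescribed independently) to get the converse, and read off the regularity equivalence from Theorem~\ref{thmlift}; the paper carries out the identical computation in stem-function notation and records precisely your splitting-data formula in the remark immediately following the theorem. The one discrepancy is purely a matter of convention: you take the wedge of spanning vectors of the line, while the paper wedges the coefficient vectors of the two linear equations cutting it out, so your six-tuple is the dual of the paper's and the coordinate that is identically $1$ is your first entry rather than $\xi_6$ --- hence the hypothesis ``$\xi_{6}\circ\gamma$ never zero'' must be read in the paper's normalization (equivalently, as non-vanishing of your leading coordinate), which is what you implicitly do when restricting to that affine chart.
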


\begin{proof}
Given a slice  function $f:\Omega_D\rightarrow \mathbb{H}$, its twistor lift is given, as in Formula~\eqref{lift}, by, 
$\tilde{f}[1,u,v,uv]=[1,u,p_{i}\circ F^{\top}(v)-u(p_{i}\circ F^{\bot c}(v)),p_{i}\circ F^{\bot} (v)+u(p_{i}\circ F^{\top c}(v))]$,
where $f^{\top}$ and $f^{\bot}$ are
defined as in the proof of Theorem~\ref{thmlift}. Fixing $v$, $\tilde{f}(l_v)$ is defined by the following
linear equations:
\begin{equation*}
 \begin{cases}
X_0(p_{i}\circ F^{\top})-X_1(p_{i}\circ F^{\bot c})-X_2=0\\
X_0(p_{i}\circ F^{\bot}) +X_1(p_{i}\circ F^{\top c})-X_3=0.
\end{cases}
\end{equation*}
The coefficients of the last two equations determine the following generating vectors
\begin{equation*}
 e_1=[p_{i}\circ F^{\top},-p_{i}\circ F^{\bot c},-1,0],\quad e_2=[p_{i}\circ F^{\bot},p_{i}\circ F^{\top c},0,-1].
\end{equation*}
Using Equation~\eqref{klein}, then, the twistor transform can be made explicit as follows
\begin{multline*}
\mathcal{F}(v)=[\xi_1,\dots,\xi_6]=
[(p_{i}\circ F^{\top})(v)(p_{i}\circ F^{\top c})(v)+(p_{i}\circ F^{\bot})(v)(p_{i}\circ F^{\bot c})(v),\\(p_{i}\circ F^{\bot})(v),-(p_{i}\circ F^{\top})(v),(p_{i}\circ F^{\top c})(v),(p_{i}\circ F^{\bot c})(v),1],
\end{multline*}
where $\{\xi_i\}=\{e_1^h\wedge e_2^k\}_{0\leq h<k\leq 3}$. But now that we have the explicit parameterization of $\mathcal{F}(v)$ it is clear that this is a
holomorphic curve if and only if $f$ is a slice regular function.\\
Vice versa, given a  curve $\gamma:D\rightarrow \mathbb{G}r_2(\mathbb{C}^4)$ such that $\xi_6\circ \gamma$ is never zero, we can assume $\xi_6\circ \gamma=1$
and recover the splittings of $f$ as follows,
\begin{equation*}
 (p_{i}\circ F^{\top})=-\xi_3\circ\gamma,\quad (p_{i}\circ F^{\bot})=\xi_2\circ\gamma,\quad (p_{i}\circ F^{\top c})=\xi_4\circ\gamma,\quad (p_{i}\circ F^{\bot c})=\xi_5\circ\gamma.
\end{equation*}
Thanks to the Representation Theorem we can now recover $f$ and Remark~\ref{charareg} give us regularity.
\end{proof}

From the proof, then, we have that the twistor transform $\mathcal{F}$ of a slice regular function $f$, can be represented in the following way,
\begin{multline*}
\mathcal{F}(v)=[(p_{i}\circ F^{\top})(v)(p_{i}\circ F^{\top c})(v)+(p_{i}\circ F^{\bot})(v)(p_{i}\circ F^{\bot c})(v),\\(p_{i}\circ F^{\bot})(v),-(p_{i}\circ F^{\top})(v),(p_{i}\circ F^{\top c})(v),(p_{i}\circ F^{\bot c})(v),1].
\end{multline*}

\begin{remark}
As for Theorem~\ref{thmlift}, in the last proof we could repeat the computations using the 
Splitting Lemma. The result would be the following,
\begin{equation*}
\mathcal{F}(v)=[g(v)\hat{g}(v)+\hat{h}(v)h(v),h(v),-g(v),\hat{g}(v),\hat{h}(v),1],
\end{equation*}
which coincides with the result in~\cite{gensalsto}.
\end{remark}

We will now present some examples.
\begin{example}
 
\begin{itemize}
\item Let $f_{1}:\mathbb{H}\setminus \mathbb{R}\rightarrow \mathbb{H}$ be the following slice regular function: 
$f(\alpha+ I\beta)=1-Ii$. This function is equal to $2$ over $\mathbb{C}_{i}$ and to $0$ over $\mathbb{C}_{-i}$. Its twistor transform 
$\mathcal{F}_{1}:\mathbb{C}^{+}\rightarrow\mathbb{G}r(\mathbb{C}^{4})$ is the constant function 
$v\mapsto [0,0,-2,0,0,1]$.
\item Let $f_{2}:\mathbb{H}\setminus \mathbb{R}\rightarrow \mathbb{H}$ be the following slice regular function: 
$f(\alpha+ I\beta)=1+Ii$. This function is equal to $0$ over $\mathbb{C}_{i}$ and to $2$ over $\mathbb{C}_{-i}$. 
Its twistor transform $\mathcal{F}_{2}:\mathbb{C}^{+}\rightarrow\mathbb{G}r(\mathbb{C}^{4})$ is the constant function 
$v\mapsto [0,0,0,2,0,1]$.
\item Let $f_{3}:\mathbb{H}\setminus \mathbb{R}\rightarrow \mathbb{H}$ be the following slice regular function: 
$f(\alpha+ I\beta)=(\alpha+I\beta)(1-Ii)/2$. This function is equal to $(\alpha+I\beta)$ over $\mathbb{C}_{i}$ and to $0$ over $\mathbb{C}_{-i}$. 
Its twistor transform $\mathcal{F}_{3}:\mathbb{C}^{+}\rightarrow\mathbb{G}r(\mathbb{C}^{4})$ is the function 
$v\mapsto [0,0,-v,0,0,1]$.
\item Let $f_{4}:\mathbb{H}\setminus \mathbb{R}\rightarrow \mathbb{H}$ be the following slice regular function: 
$f(\alpha+ I\beta)=(\alpha+I\beta)(1+Ii)/2$. This function is equal to $0$ over $\mathbb{C}_{i}$ and to $(\alpha+I\beta)$ over $\mathbb{C}_{-i}$. 
Its twistor transform $\mathcal{F}_{4}:\mathbb{C}^{+}\rightarrow\mathbb{G}r(\mathbb{C}^{4})$ is the function 
$v\mapsto [0,0,0,v,0,1]$.
\end{itemize}

\end{example}

As said at the beginning of this section we want to characterize a certain class of linear holomorphic functions 
$\gamma: D\rightarrow\mathbb{G}r(\mathbb{C}^{4})$ in terms 
of slice regular functions. We will restrict to the case in which $\xi_{6}\circ \gamma$ is never zero, 
The theorem we are going to prove is the following.

\begin{theorem}
Let $\gamma: \mathbb{C}^+\rightarrow\mathbb{G}r(\mathbb{C}^{4})$ be a holomorphic curve such that $\xi_{6}\circ \gamma$ is never zero. Then $\gamma$ is affine   if and only if 
there exist  $A,B\in\mathbb{C}$, with $A/B\in\mathbb{C}^{+}\cup\mathbb{R}$ such that 
$\gamma$ is the twistor 
transform of a slice regular function $f$ and $(A+xB)\cdot f$ is a slice affine function that satisfies
\begin{equation}\label{herm}
 h_i(Af_i-Bg_i,\bar{A}f_{-i}-\bar{B}g_{-i})=0,
\end{equation}
 where $f_{\pm i}$ are the values of the slice derivative of $(A+xB)\cdot f$ in $\mathbb{C}_{\pm i}$,
 $g_{\pm i}$ are the values of the slice constant function $(A+xB)\cdot f-x[(1-Ii)f_i+(1+Ii)f_{-i}]$ in $\mathbb{C}_{\pm i}$
and $h_i$ denotes the hermitian product in $\mathbb{C}_i\oplus\mathbb{C}_{i}^\bot\simeq\mathbb{H}$.
\end{theorem}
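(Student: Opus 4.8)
The plan is to run everything through the explicit splitting form of the twistor transform, $\mathcal{F}(v)=[g\hat g+h\hat h:h:-g:\hat g:\hat h:1]$, obtained in the previous theorem, and to read the meaning of ``$\gamma$ affine'' directly off its homogeneous coordinates. Since $\xi_6\circ\gamma$ is never zero, the earlier theorem recovers from $\gamma$ a slice regular $f$ whose splitting data are $g=-\xi_3\circ\gamma$, $h=\xi_2\circ\gamma$, $\hat g=\xi_4\circ\gamma$, $\hat h=\xi_5\circ\gamma$. Saying that $\gamma$ is \emph{affine} means that it admits a homogeneous lift $[\ell_1:\dots:\ell_6]$ with every $\ell_k$ of degree at most one in $v$; equivalently, after normalizing $\xi_6=1$, the four functions $g,h,\hat g,\hat h$ together with the combination $g\hat g+h\hat h$ become affine once a single common linear denominator is cleared. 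So the first step is to identify that common denominator and to realize it as a factor $A+xB$.

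The key mechanism is the behaviour of the splitting under the slice product by the degree-one slice function $A+xB$. Using the $*$-product on power series, I would record that left slice multiplication by $A+xB$ rescales the holomorphic components by $A+vB$ and the ``hat'' components by $\overline A+v\overline B$; concretely $(A+xB)\cdot f$ has splitting data $(A+vB)g,(A+vB)h,(\overline A+v\overline B)\hat g,(\overline A+v\overline B)\hat h$. Hence $(A+xB)\cdot f$ is slice affine exactly when these four products are affine, i.e.\ when $g,h$ carry at most a simple pole at the root of $A+vB$ and $\hat g,\hat h$ at most a simple pole at the root of $\overline A+v\overline B$. Choosing $A,B$ so that this root is the common pole produced by the affineness of $\gamma$ clears the denominators; the requirement $A/B\in\mathbb{C}^+\cup\mathbb{R}$ is precisely the statement that the root $-A/B$ lies off $\mathbb{C}^+$, so that $f=(A+xB)^{-\cdot}\cdot\big((A+xB)\cdot f\big)$ is genuinely slice regular on the whole domain rather than merely meromorphic. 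This yields the equivalence between ``$\gamma$ affine'' and ``$(A+xB)\cdot f$ slice affine'' at the level of the coordinates $\xi_2,\dots,\xi_5$.

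It then remains to control the first coordinate $\xi_1=g\hat g+h\hat h$, and here I would invoke the Pl\"ucker relation \eqref{klein}, $\xi_1\xi_6-\xi_2\xi_5+\xi_3\xi_4=0$, which holds identically along the twistor transform. In the cleared homogeneous coordinates it reads $\ell_1\ell_6=\ell_2\ell_5-\ell_3\ell_4$, so $\ell_1$ is affine if and only if the degree-two polynomial $\ell_2\ell_5-\ell_3\ell_4$ is divisible by the linear factor $\ell_6$, that is, if and only if it vanishes at the root $v_0=-A/B$. Writing the components of $(A+xB)\cdot f$ through its slice derivative (the slice constant function with values $f_{\pm i}$) and its slice constant part (with values $g_{\pm i}$), and evaluating this vanishing at $v_0$, the divisibility condition collapses into a single pairing between the $\mathbb{C}_i$-data and the $\mathbb{C}_{-i}$-data. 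Because the two sets of data are interchanged by the conjugation built into the ``hat'' components, this pairing is Hermitian rather than bilinear, and it is exactly equation \eqref{herm}, $h_i(Af_i-Bg_i,\overline A f_{-i}-\overline B g_{-i})=0$. Conversely, \eqref{herm} together with slice affineness of $(A+xB)\cdot f$ forces $\ell_1$ to be affine, hence $\gamma$ to be affine.

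The main obstacle is this last identification: tracking how $\ell_2\ell_5-\ell_3\ell_4$ evaluated at $v_0=-A/B$ reorganizes, after substituting the slice-derivative and slice-constant values of $(A+xB)\cdot f$ and using the $A+vB$ versus $\overline A+v\overline B$ rescalings, into the Hermitian form $h_i$ on $\mathbb{C}_i\oplus\mathbb{C}_i^{\bot}\simeq\mathbb{H}$. The appearance of $A,\overline A$ and $B,\overline B$ on the two arguments of $h_i$ is the visible trace of the asymmetric rescaling of the holomorphic and antiholomorphic splitting components, and verifying that the bookkeeping produces precisely the coefficients $Af_i-Bg_i$ and $\overline A f_{-i}-\overline B g_{-i}$ is the one genuinely delicate computation; the constraint $A/B\in\mathbb{C}^+\cup\mathbb{R}$ must be carried along throughout to guarantee that $f$ itself, and not only $(A+xB)\cdot f$, is defined on all of $\Omega_D$.
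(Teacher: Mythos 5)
Your architecture is essentially the paper's: parametrize the affine curve with a common linear denominator $\ell_6$, recover $f$ from the normalized coordinates $\xi_2,\dots,\xi_5$, clear the denominator by a degree-one slice multiplier, and extract Equation~\eqref{herm} from the Klein relation. Your packaging of the last step is in fact cleaner than the paper's: instead of manipulating the three coefficient identities obtained by substituting the affine parametrization into the Klein quadric, you observe that $\xi_1$ is affine iff $\ell_6$ divides $\ell_2\ell_5-\ell_3\ell_4$, i.e.\ iff that quadratic vanishes at $v_0=-A/B$, and a direct evaluation there does reproduce the Hermitian pairing of Equation~\eqref{herm}. That part is sound.

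The genuine gap is in your ``key mechanism''. Your rescaling formula is correct for the quaternionic polynomial $x\mapsto A+xB$ with $A,B\in\mathbb{C}_i$: under the $*$-product the hat components are indeed multiplied by $\bar{A}+v\bar{B}$. But this is incompatible with what the affineness of $\gamma$ actually gives you. After normalizing $\xi_6\circ\gamma=1$, all four functions $g,h,\hat{g},\hat{h}$ (as functions of $v\in\mathbb{C}^+$) are quotients of affine functions by the \emph{single} denominator $A+vB$; there is no second family with denominator $\bar{A}+v\bar{B}$. Hence $(\bar{A}+v\bar{B})\hat{g}(v)$ carries the non-cancelling factor $(\bar{A}+v\bar{B})/(A+vB)$ and is not affine unless $A/B\in\mathbb{R}$ or $\hat{g}$ is constant: with your multiplier, $(A+xB)\cdot f$ is generically \emph{not} slice affine, and the step ``choosing $A,B$ so that this root is the common pole'' cannot be executed, because your construction produces two distinct roots $-A/B$ and $-\bar{A}/\bar{B}$ but only one common pole. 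The multiplier that makes the chain of equivalences work is the real (slice-preserving) slice function equal to $A+vB$ on $\mathbb{C}_i^+$ and to $\overline{A+vB}$ on $\mathbb{C}_{-i}^+$; being real, its slice product with $f$ is the pointwise product, all four splitting components are rescaled by the same factor $A+vB$, and the common denominator clears. (The paper itself is ambiguous on exactly this point --- its displayed Representation Formula computation carries the same conjugation slip in the denominator of the $(1+Ii)$-half --- but your explicit version is the one that demonstrably breaks.) Note finally that, once the symmetric rescaling is used, the $\bar{A}$, $\bar{B}$ in the second slot of $h_i$ in Equation~\eqref{herm} do not arise from an asymmetric rescaling, as you suggest, but from the conjugation $\hat{g}(v)=\overline{G(\bar{v})}$ relating the hat components to the coefficients $f_{-i}$, $g_{-i}$ read on $\mathbb{C}_{-i}^+$.
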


\begin{proof}
 A linear map $\gamma: \mathbb{C}^+\rightarrow\mathbb{G}r(\mathbb{C}^{4})$ is a map of the form,
 \begin{equation*}
  \gamma(v)=[c_{11}+c_{12}v,c_{21}+c_{22}v,c_{31}+c_{32}v,c_{41}+c_{42}v,c_{51}+c_{52}v,c_{61}+c_{62}v],
 \end{equation*}
considering the Grassmannian $\mathbb{G}r_2(\mathbb{C}^4)$ as the Klein quadric given in Formula~\eqref{klein} in $\mathbb{CP}^5$. The condition $\xi_6\circ \gamma\neq 0$ for all $v\in\mathbb{C}^{+}$ can be interpreted,
of course, as $c_{61}/c_{62}\in\mathbb{C}^{+}\cup\mathbb{R}$. Dividing everything by 
$c_{61}+c_{62}v$, we obtain
 \begin{equation*}
  \gamma(v)=\left[\frac{c_{11}+c_{12}v}{c_{61}+c_{62}v},\frac{c_{21}+c_{22}v}{c_{61}+c_{62}v},\frac{c_{31}+c_{32}v}{c_{61}+c_{62}v},\frac{c_{41}+c_{42}v}{c_{61}+c_{62}v},\frac{c_{51}+c_{52}v}{c_{61}+c_{62}v},1\right],
 \end{equation*}
and so, now $\xi_6\circ \gamma=1$. Substituting then the components of $\gamma$ in Equation~\eqref{klein}, one obtain the following system of equations:
\begin{equation}\label{sysrat}
\left\{ \begin{array}{l}
  c_{11}c_{61}- c_{21}c_{51}+c_{31}c_{41}=0\\
  c_{11}c_{62}+c_{12}c_{61}- (c_{21}c_{52}+c_{22}c_{51})+(c_{31}c_{42}+c_{32}c_{41})=0\\
  c_{12}c_{62}+c_{32}c_{42}+ c_{22}c_{52}=0
 \end{array}\right. .
\end{equation}
Moreover, since $\gamma$ is a holomorphic function, then it will be the twistor transform of some slice regular function $f$ such that
\begin{equation*}
 \begin{array}{rcl}
  f_{\mathbb{C}_i^+}(\alpha+i\beta) & = & 
  -\displaystyle\frac{c_{31}+c_{32}(\alpha+i\beta)}{c_{61}+c_{62}(\alpha+i\beta)}+
  \displaystyle\frac{c_{21}+c_{22}(\alpha+i\beta)}{c_{61}+c_{62}(\alpha+i\beta)}j\\
  & & \\
  f_{\mathbb{C}_{-i}^+}(\alpha-i\beta) & = & 
  \displaystyle\overline{\frac{c_{41}+c_{42}(\alpha+i\beta)}{c_{61}+c_{62}(\alpha+i\beta)}}+
  \displaystyle\overline{\frac{c_{51}+c_{52}(\alpha+i\beta)}{c_{61}+c_{62}(\alpha+i\beta)}}j.
 \end{array}
\end{equation*}
Thanks to the Representation Formula one obtains that, for each $\alpha+ I\beta\in\mathbb{H}\setminus\mathbb{R}$,
\begin{multline*}
 2f(\alpha+I\beta) = [(1-Ii)f(\alpha+i\beta)+(1+Ii)f(\alpha-Ii)]=\\
  =  (c_{61}+(\alpha+I\beta)c_{62})^{-\cdot}\cdot [(\alpha+I\beta)(1-Ii)(-c_{32}+c_{22}j)+(1-Ii)(-c_{31}+c_{21}j)]+\\
 +(c_{61}+(\alpha+I\beta)c_{62})^{-\cdot}\cdot[(\alpha+I\beta)(1+Ii)(\bar c_{42}+\bar c_{52}j)+(1+Ii)(\bar c_{41}+\bar c_{51}j)],
\end{multline*}
but then, $(c_{61}+(\alpha+I\beta)c_{62})\cdot f$ is a slice affine function.
If now, one between $c_{61}$ or $c_{62}$ is equal to zero this correspond, respectively, to $A$ or $B$ equal to zero and
so Equation~\eqref{herm} holds true.
If both $c_{61}$ and $c_{62}$ are non-zero, observe that, the first and the third equations in Formula~\eqref{sysrat} 
can be written , respectively, as $h_i(g_i,g_{-i})=c_{11}A$ and $h_i(f_i,f_{-i})=c_{12}B$.
Substituting these in the second equation of the system and since
$ (c_{21}c_{52}+c_{22}c_{51})-(c_{31}c_{42}+c_{32}c_{41})=h_i(g_i,f_{-i})+h_i(f_i,g_{-i})$,
we get
\begin{equation*}
 h_i(g_i,g_{-i})\frac{B}{A}+h_i(f_i,f_{-i})\frac{A}{B}=h_i(g_i,f_{-i})+h_i(f_i,g_{-i}),
\end{equation*}
and so Equation~\eqref{herm} holds true.
The vice versa is trivial.
\end{proof}
\begin{example}
Simple examples of slice regular functions that satisfy the condition in Equation~\eqref{herm},
are all the functions of the following type:
\begin{equation*}
 \begin{array}{rclc}
  f: & \mathbb{H}\setminus\mathbb{R} & \rightarrow & \mathbb{H}\\
     & \alpha +I\beta & \mapsto & (Cx+D)^{-\cdot}\cdot(Ax+B)(1-Ii)/2,
 \end{array}
\end{equation*}
with $\left(\begin{array}{cc}
A & B \\
C & D
\end{array}\right)\in SL(2,\mathbb{R})$. In the next section we will study one particular function in this set and then we will add some remarks to the whole family.
\end{example}

\begin{remark}
 The set of slice affine functions that satisfy Formula~\eqref{herm} does not contain non constant slice functions that extend to the real line. In fact, as shown in Remark~\ref{extend}, a 
 slice affine function extends to $\mathbb{R}$ if the coefficients of first order are equal, i.e.: $f_+=f_-$, meaning that $ h_i(f_i,f_{-i})\neq 0$.
\end{remark}

\section{A first non trivial example}

In this section we will study the following slice regular function
\begin{equation}\label{function}
 \begin{array}{rclc}
  f: & \mathbb{H}\setminus\mathbb{R} & \rightarrow & \mathbb{H}\\
     & \alpha +I\beta & \mapsto & (\alpha+I\beta)(1-Ii)/2
 \end{array}
\end{equation}
as a tool to generate  OCS's over its image.
We will write also, for brevity, $f(x)=x(1-Ii)/2$, where $x=\alpha+I\beta\in\mathbb{H}\setminus\mathbb{R}$. As
was shown in~\cite{altavilla}, this function is constant and equal to $0$ if restricted to $\mathbb{C}_{-i}^+$ and equal to the identity if restricted
to $\mathbb{C}_i^+$. In the same paper it was shown either theoretically and by explicit computations that its restriction to $\mathbb{H}\setminus\mathbb{C}_{-i}^+$ is an
open function.
In~\cite{altavilladiff} it was proved that, if restricted to $\mathbb{H}\setminus\mathbb{C}_{-i}^+$, the function $f$ is injective. 
For these reasons this function fits very well in the twistorial construction studied here. 
Moreover, this construction has a symbiotic aspect with respect to the function $f$. In fact, with the help of
the twistor lift stated in Theorem~\ref{thmlift} it is possible to understand constructively the image of $f$.
The next theorem precises this fact.
\begin{theorem}\label{image}
 If $q=q_0+q_1i+q_2j+q_3k$, then the function defined in Equation~\eqref{function} is such that 
 $f(\mathbb{H}\setminus\mathbb{C}_{-i}^+)=\{q\in\mathbb{H}\,\mid\,q_1>0\}$. Moreover 
 \begin{equation*}
  \bigcup_{I\in\mathbb{S}}f\mid_{\mathbb{C}_I^+}(\mathbb{R})=\{q\in\mathbb{H}\,\mid\,q_1=0\},
 \end{equation*}
 where $f\mid_{\mathbb{C}_I^+}(\mathbb{R})$ means the unique extension to $\mathbb{R}$ of the function restricted to $\mathbb{C}_I^+$.
\end{theorem}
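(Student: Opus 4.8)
The plan is to reduce the whole statement to one explicit coordinate computation and then read off both assertions from it. Writing $I=ai+bj+ck$ with $a^2+b^2+c^2=1$ and $x=\alpha+I\beta$ with $\beta>0$, and using $Ii=-a+cj-bk$, a direct expansion of $f(x)=\tfrac12\,x(1-Ii)$ gives
\begin{equation*}
f(x)=\tfrac12\bigl(\alpha(1+a)+\beta(1+a)\,i+(b\beta-\alpha c)\,j+(\alpha b+c\beta)\,k\bigr).
\end{equation*}
Thus, in the coordinates $f(x)=q_0+q_1i+q_2j+q_3k$, one has $q_0=\tfrac12\alpha(1+a)$, $q_1=\tfrac12\beta(1+a)$, $q_2=\tfrac12(b\beta-\alpha c)$ and $q_3=\tfrac12(\alpha b+c\beta)$. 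Everything below follows from this formula. As a sanity check, the twistor lift of this $f$ computed as in Remark~\ref{usuallift} is $\tilde f[1,u,v,uv]=[1,u,v,0]$, i.e.\ it lies on the hyperplane $X_3=0$, in accordance with the Planes subsection.

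For the first identity I would argue as follows. Since $\beta>0$ and $a\ge -1$, the formula gives $q_1=\tfrac12\beta(1+a)\ge 0$, with equality exactly when $a=-1$, i.e.\ when $I=-i$, i.e.\ when $x\in\mathbb{C}_{-i}^+$; hence $f(\mathbb{H}\setminus\mathbb{C}_{-i}^+)\subseteq\{q_1>0\}$. For surjectivity I would invert the system: put $t:=1+a>0$, so that $\alpha=2q_0/t$ and $\beta=2q_1/t$, and the remaining two equations become the linear system
\begin{equation*}
\begin{pmatrix} q_1 & -q_0 \\ q_0 & q_1 \end{pmatrix}\begin{pmatrix} b \\ c \end{pmatrix}=t\begin{pmatrix} q_2 \\ q_3 \end{pmatrix},
\end{equation*}
whose determinant $q_0^2+q_1^2$ is nonzero because $q_1>0$. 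Solving for $b,c$ and imposing $a^2+b^2+c^2=1$ collapses, via the identity $(q_1q_2+q_0q_3)^2+(q_1q_3-q_0q_2)^2=(q_0^2+q_1^2)(q_2^2+q_3^2)$, to the single relation $t=2(q_0^2+q_1^2)/(q_0^2+q_1^2+q_2^2+q_3^2)$. One then checks $t\in(0,2]$, so $a=t-1\in(-1,1]$ and $\beta=2q_1/t>0$ are admissible, giving a (unique) preimage. Together with the injectivity of $f$ on $\mathbb{H}\setminus\mathbb{C}_{-i}^+$ recalled before the statement, this yields $f(\mathbb{H}\setminus\mathbb{C}_{-i}^+)=\{q_1>0\}$.

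For the second identity, restricting $f$ to $\mathbb{C}_I^+$ and letting $\beta\to 0^+$ in the formula yields the real trace $f|_{\mathbb{C}_I^+}(\alpha)=\tfrac12\alpha(1-Ii)$, a line through the origin in the direction $1-Ii=(1+a)-cj+bk$; in particular its $i$-component vanishes, so $\bigcup_{I}f|_{\mathbb{C}_I^+}(\mathbb{R})\subseteq\{q_1=0\}$. For the reverse inclusion I would use the geometric picture: as $I$ ranges over $\mathbb{S}$, the vector $1-Ii$ sweeps out the round $2$-sphere $\mathcal{S}'$ of radius $1$ centred at $1$ inside $\mathrm{span}_{\mathbb{R}}(1,j,k)=\{q_1=0\}$, and the union of the real lines through the origin meeting $\mathcal{S}'$ recovers this hyperplane; concretely, given $q=q_0+q_2j+q_3k$ one solves $\tfrac12\alpha(1-Ii)=q$ exactly as in the first part, finding $\alpha=(q_0^2+q_2^2+q_3^2)/q_0$ and an admissible $I$.

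I expect the reverse inclusion in the second identity to be the main obstacle: since $\mathcal{S}'$ passes through the origin, the lines whose direction lies in the tangent plane to $\mathcal{S}'$ at the origin — precisely the directions with $q_0=0$ — meet $\mathcal{S}'$ only at the origin, so these boundary directions are attained only in a limiting sense and must be treated separately; this is exactly the case $q_0=0$ excluded by the formula $\alpha=(q_0^2+q_2^2+q_3^2)/q_0$ above. By contrast, the admissibility checks ($t\in(0,2]$, $\beta>0$, $a\in(-1,1]$) and the algebraic identity used to eliminate $b,c$ are routine once the system is set up.
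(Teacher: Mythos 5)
Your coordinate formula for $f$ and your treatment of the first identity are correct and essentially coincide with the paper's argument: the paper obtains the same expression $2f(\alpha+I\beta)=\alpha(a+1)+\beta(a+1)i+(\beta b-\alpha c)j+(\alpha b+\beta c)k$ by computing $(1+uj)^{-1}v$ through the twistor lift of Theorem~\ref{thmlift} rather than by expanding $x(1-Ii)$ directly, and then inverts exactly the same $2\times 2$ linear system in $(b,c)$ (with the substitution $B=b/(a+1)$, $C=c/(a+1)$ playing the role of your parameter $t=1+a$), recovering $a$ from the constraint $a^2+b^2+c^2=1$. So the first half of your proposal matches the paper both in substance and in detail, and your admissibility checks are the same ones the paper performs.

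The obstacle you isolate in the second identity is real, and you should not expect to remove it by ``treating the boundary directions separately'': a point $q=q_2j+q_3k\neq 0$ lies on the line $\mathbb{R}\cdot(a+1,0,-c,b)$ only if $s(a+1)=0$ for some $s\neq 0$, which forces $a=-1$ and hence $b=c=0$, a contradiction; so such points are genuinely absent from the union, which equals $\{q_1=0,\ q_0\neq 0\}\cup\{0\}$ rather than the full hyperplane $\{q_1=0\}$. The paper's own proof does not close this gap: it observes that each limit line lies in $\{q_1=0\}$ and then asserts that taking the union over $I$ ``will span the whole hyperplane'', which is true for the linear span (or for the closure of the union) but not for the union as a set. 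Your analysis is therefore more careful than the printed proof on this point; the precise statement reachable by this method is that $\bigcup_{I\in\mathbb{S}}f\mid_{\mathbb{C}_I^+}(\mathbb{R})$ is the dense subset $\{q_1=0,\ q_0\neq0\}\cup\{0\}$ of the hyperplane, and the stated equality holds only after passing to closures.
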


\begin{proof}
 To prove the theorem we will use the twistor lift in Formula~\eqref{lift}. In fact, thanks to Theorem~\ref{thmlift}, it is possible to compute the image of a slice regular function by looking
 at the image of the projection to $\mathbb{H}$ of its twistor lift. Since, as already said, the function $f$ is equal to the identity if restricted to
 $\mathbb{C}_i^+$ and to zero over the opposite semislice $\mathbb{C}_{-i}^+$, then its twistor lift is defined as follows:
 \begin{equation}\label{twistorf}
 \begin{array}{rclc}
  F: & \mathcal{Q}^+\cap\pi^{-1}(\mathbb{H}\setminus\mathbb{C}_{-i}^+) & \rightarrow & \mathbb{CP}^3\\
     & [1,u,v,uv] & \mapsto & [1,u,v,0],
 \end{array}
\end{equation}
where, if $\alpha+I\beta\in\mathbb{H}\setminus\mathbb{C}_i^+$ and $I=ai+bj+ck$, then $u=-i\frac{b+ic}{a+1}$ and $v=\alpha+i\beta$, with $(a,b,c)\neq (-1,0,0)$ and $\beta>0$.
At the end what we want to compute is the image of the function $(1+uj)^{-1}v$ and so these are the computations:
\begin{equation*}
\begin{array}{rcl}
 (1+uj)^{-1}v & = & \left(1-\displaystyle\frac{b+ic}{a+1}k\right)(\alpha+i\beta)\\
 & = & \displaystyle\frac{(a+1)^2}{(a+1)^2+(b^2+c^2)}\left(1+\displaystyle\frac{bk-cj}{a+1}\right)(\alpha+i\beta)\\
 & = & \displaystyle\frac{1}{2}[(a+1)(\alpha+i\beta)+(\beta b-\alpha c)j+(\alpha b+\beta c)k].
\end{array}
\end{equation*}
So, the image  of a quaternion $x=\alpha+(ai+bj+ck)\beta$ via $f$, with $ai+bc+ck\in\mathbb{S}\setminus\{-i\}$ and $\beta>0$ is the quaternion
\begin{equation*}
 2f(x)=\alpha(a+1)+\beta(a+1)i+(\beta b-\alpha c)j+(\alpha b+\beta c)k.
\end{equation*}
Take now a generic quaternion $q=q_0+q_1i+q_2j+q_3k$. This will be reached by $f$ if and only if $q_1>0$. In fact the system
\begin{equation*}
 \begin{cases}
  \alpha(a+1)=q_0\\
  \beta(a+1)=q_1\\
  \beta b-\alpha c=q_2\\
  \alpha b+\beta c=q_3,
 \end{cases}
\end{equation*}
can be solved in the following way: the first two equations give $\alpha=q_0/(a+1)$ and $\beta=q_1/(a+1)$ and since $(a+1)\in(0,2]$, then $q_1>0$. If we set $B=b/(a+1)$ and $C=c/(a+1)$,
the last two equations can be written as 
\begin{equation*}
 \begin{cases}
  q_1B-q_0C=q_2\\
  q_0C+q_1B=q_3.
 \end{cases}
\end{equation*}
The last is a linear system such that the  two equations are linearly independent, so the solutions is,
\begin{equation*}
 B=\frac{q_1q_2+q_0q_3}{q_0^2+q_1^2},\quad C=\frac{q_1q_3-q_0q_2}{q_0^2+q_1^2}.
\end{equation*}
Now we remember that $a^2+b^2+c^2=1$ and so $B^2+C^2=\frac{1-a}{1+a}$ that entails $a=\frac{1-B^2-C^2}{1+B^2+C^2}$ which is always an admissible solution since it is always different
from $-1$.

For the second part of the theorem, fix $I=ai+bj+ck\in\mathbb{S}\setminus\{-i\}$ and look for the following limit,
\begin{equation*}
  \lim_{\underset{\alpha+I\beta\in\mathbb{C}_I^+}{\beta\to 0}} f(\alpha+I\beta).
\end{equation*}
After restricting the function to $\mathbb{C}_I^+$ it is possible to extend it to $\mathbb{R}$ and also to look at the image via the twistor lift. Since $f$ is continuous we obtain
that, up to a factor 2, the previous limit is equal to
\begin{equation*}
\alpha(a+1)-\alpha cj+\alpha bk=\alpha (a+1,0,-c,b),
\end{equation*}
which is a straight line belonging to the set $\{q\in\mathbb{H}\,\mid\,q_1=0\}$ passing through the vector $(a+1,0,-c,b)$. Taking the union, for $(a,b,c)$ that runs over 
$\mathbb{S}\setminus\{-i\}$, it is clear that this will span the whole hyperplane $\{q_1=0\}$.
\end{proof}

The twistor lift of $f$ lies in the hypersurface $\mathcal{H}:=\{X_{3}=0\}\subset\mathbb{CP}^3$. In this case the general theory (see Section 3 of the present paper and~\cite[Section 3]{salamonviac}) says that 
$\mathcal{H}$ induces an OCS conformally equivalent to a constant one, defined over the image of $f$. This is actually true and we will show that there is a specific 
conformal function from $\{q_1>0\}\subset \mathbb{H}$ to $\{q_1<0\}$ that sends $\mathbb{J}^f$ to $i$.
The theorem is the following one.
\begin{theorem}
 The complex metric manifold $(\{q_1>0\},g_{Eucl},\mathbb{J}^f)$ is conformally equivalent to $(\{q_1<0\},g_{Eucl},\mathbb{J}_i)$, where, by $\mathbb{J}_i$ we mean 
 the left multiplication by $i$. The conformality is determined by the function $g:\{q_1>0\}\rightarrow \{q_1<0\}$ defined by $g(q)=q^{-1}$.
\end{theorem}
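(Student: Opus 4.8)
The plan is to verify directly that the inversion $g(q)=q^{-1}$ is a conformal diffeomorphism of the required half-spaces carrying $\mathbb{J}^f$ to $\mathbb{J}_i$. First I would record that $g$ is conformal and has the right range: it is exactly the inversion whose twistor lift $[X_0,X_1,X_2,X_3]\mapsto[X_2,X_3,X_0,X_1]$ was exhibited in Section~\ref{twsec}, so it is an orientation-preserving conformal transformation of $\mathbb{S}^4$, hence a biholomorphism wherever it intertwines the two structures. Writing $q^{-1}=q^{c}/|q|^{2}$, the $i$-component of $q^{-1}$ equals $-q_1/|q|^2$, so $g$ restricts to a diffeomorphism $\{q_1>0\}\to\{q_1<0\}$, as the statement demands.

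Next I would isolate the geometric content. By the theorem computing the push-forward $\mathbb{J}^f$ in Section~\ref{twlift}, at a point $q=f(p)$ with $p=\alpha+I_p\beta$ the structure acts by left multiplication, $\mathbb{J}^f_q v=I_p v$; here $p$ is the unique preimage in $\mathbb{H}\setminus\mathbb{C}_{-i}^+$ guaranteed by Theorem~\ref{image}. The crux is the pointwise identity
\begin{equation*}
I_p\,q = q\,i,\qquad\text{equivalently}\qquad q^{-1} I_p\, q = i,
\end{equation*}
valid for every $q\in\{q_1>0\}$. To prove it I would use the explicit parametrization of the image from the proof of Theorem~\ref{image}: with $I_p=ai+bj+ck$ and $x=\alpha+I_p\beta$ one has
\begin{equation*}
2q=\alpha(a+1)+\beta(a+1)i+(\beta b-\alpha c)j+(\alpha b+\beta c)k,
\end{equation*}
and a direct quaternionic multiplication, using $a^2+b^2+c^2=1$, shows that $I_p(2q)$ and $(2q)i$ have the same four real components. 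This is the main obstacle: it is the only place where the precise form of $f$ enters, and it is what forces the structure $\mathbb{J}^f_q$ to be the conjugate $q\,i\,q^{-1}$ of the constant unit $i$.

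Finally I would push the structure forward. The differential of inversion is $dg_q(v)=-q^{-1}v\,q^{-1}$, whose inverse is $dg_q^{-1}(w)=-q\,w\,q$. Hence, for $w\in T_{q^{-1}}\mathbb{H}$,
\begin{equation*}
(g_*\mathbb{J}^f)_{q^{-1}}w=dg_q\big(\mathbb{J}^f_q(-qwq)\big)=dg_q\big(-I_p\,qwq\big)=q^{-1}I_p\,q\,w=i\,w,
\end{equation*}
the last equality being the identity above. Thus $g_*\mathbb{J}^f=\mathbb{J}_i$ on $\{q_1<0\}$, and since $g$ is a conformal diffeomorphism sending $\mathbb{J}^f$ to $\mathbb{J}_i$, it realizes the asserted conformal equivalence $(\{q_1>0\},g_{Eucl},\mathbb{J}^f)\cong(\{q_1<0\},g_{Eucl},\mathbb{J}_i)$.

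As a consistency check I would note the twistor picture: the lift of $f$ lies on $\mathcal{H}=\{X_3=0\}$, and the lift $[X_0,X_1,X_2,X_3]\mapsto[X_2,X_3,X_0,X_1]$ of $g$ carries $\mathcal{H}$ to the hyperplane $\{X_1=0\}$, which is the single-valued graph cutting out the constant structure $\mathbb{J}_i$; this matches the computation above and explains conceptually why the inversion trivializes $\mathbb{J}^f$.
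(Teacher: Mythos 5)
Your proof is correct, and it reaches the conclusion by a genuinely different route from the paper. The paper writes out the $4\times 4$ real matrices of $\mathbb{J}_i$ and $\mathbb{J}^f(p)$ (with $a,b,c$ expressed in terms of the coordinates of $p$ via Theorem~\ref{image}), computes $dg(q)$ and $\mathbb{J}_i\circ dg$ explicitly, and leaves the matching computation of $dg\circ\mathbb{J}^f$ to the reader. You instead isolate the single quaternionic identity $I_p\,q=q\,i$, i.e.\ $\mathbb{J}^f_q$ is conjugation of the constant unit $i$ by $q$, and combine it with the closed-form differential of inversion $dg_q(v)=-q^{-1}vq^{-1}$; the intertwining $g_*\mathbb{J}^f=\mathbb{J}_i$ then drops out in one line without any matrix bookkeeping. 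I checked your key identity against the parametrization $2q=\alpha(a+1)+\beta(a+1)i+(\beta b-\alpha c)j+(\alpha b+\beta c)k$ and it holds; in fact it can be seen even more directly from $q=x(1-I_pi)/2$ with $x=\alpha+I_p\beta$ commuting with $I_p$, which gives $I_pq=x(I_p+i)/2=qi$ using only $I_p^2=-1$. What your approach buys is a coordinate-free and shorter verification that also explains \emph{why} the statement is true (the structure $\mathbb{J}^f$ is the $q$-conjugate of $i$, and inversion undoes the conjugation); what the paper's approach buys is an explicit matrix description of $\mathbb{J}^f$ in the target coordinates, consistent with Remark~\ref{matrix}, which is reusable elsewhere. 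Your closing twistor-space consistency check matches the remark following the paper's proof.
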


\begin{proof}
 The function $g$ is of course a conformal map for the Euclidean metric. So, the only thing to prove is that the push-forward of $\mathbb{J}^f$ via $g$ is exactly
 $\mathbb{J}_i$, meaning that, the following equality holds true
 \begin{equation*}
  dg\circ\mathbb{J}^f=\mathbb{J}_i\circ dg.
 \end{equation*}
 We compute now the $4\times 4$ matrices representing the two complex structures $\mathbb{J}^f$ and $\mathbb{J}_i$.
%
%
%
We have that, if $v=(v_0,v_1,v_2,v_3)$ is a tangent vector in $p=f(\alpha+I\beta)$, then, $\mathbb{J}_i(p)v=(-v_1,v_0,-v_3,v_2)$, while
$\mathbb{J}^f(p)v=(-av_1-bv_2-cv_3,av_0-cv_2+bv_3,bv_0+cv_1-av_3,cv_0-bv_1+av_2)$, where $ai+bj+ck=I$.
Therefore we have that
\begin{equation*}
 \mathbb{J}_i=\left(\begin{array}{cccc}
                     0 & -1 & 0 & 0\\
                     1 & 0 & 0 & 0\\
                     0 & 0 &  0 & -1\\
                     0 & 0 & 1 & 0
                    \end{array}
\right),\quad
\mathbb{J}^f(p)=\left(\begin{array}{cccc}
                     0 & -a & -b & -c\\
                     a & 0 & -c & b\\
                     b & c &  0 & -a\\
                     c & -b & a & 0
                    \end{array}
\right),
\end{equation*}
where $p=p_0+p_1i+p_2j+p_3k$ (see also Remark~\ref{matrix}) and, working on the computations in the proof of Theorem~\ref{image},
\begin{equation*}
 a=\frac{p_0^2+p_1^2-p_2^2-p_3^2}{\mid p\mid^2},\quad b=2\frac{p_0p_3+p_1p_2}{\mid p\mid^2},\quad c=2\frac{p_1p_3-p_0p_2}{\mid p\mid^2}.
\end{equation*}
Now, writing $g$ as $g(q_0+q_1i+q_2j+q_3k)=(q_0,-q_1,-q_2,-q_3)/\mid q\mid^2$, one has that
\begin{equation*}
 dg(q)=\left(\begin{array}{cccc}
                     \mid q\mid^2-2q_0^2 & -2q_0q_1 & -2q_0q_2 & -2q_0q_3\\
                     2q_1q_0 & -\mid q\mid^2+2q_1^2 & 2q_1q_2 & 2q_1q_3\\
                     2q_2q_0 & 2q_2q_1 &  -\mid q\mid^2+2q_2^2 & 2q_2q_3\\
                     2q_3q_0 & 2q_3q_1 & 2q_3q_2 & -\mid q\mid^2+q_3^2
                    \end{array}\right)/\mid q\mid^4
\end{equation*}
and that,
\begin{equation*}
(\mathbb{J}_i\circ dg)(q)=\left(\begin{array}{cccc}
		      -2q_1q_0 & \mid q\mid^2-2q_1^2 & -2q_1q_2 & -2q_1q_3\\
                     \mid q\mid^2-2q_0^2 & -2q_0q_1 & -2q_0q_2 & -2q_0q_3\\
                     -2q_3q_0 & -2q_3q_1 & -2q_3q_2 & \mid q\mid^2-2q_3^2\\
                     2q_2q_0 & 2q_2q_1 &  -\mid q\mid^2+2q_2^2 & 2q_2q_3\\
                    \end{array}\right)/\mid q\mid^4.
\end{equation*}
We leave to the reader the (long but easy) computation of $(dg\circ\mathbb{J}^f)(q)$ and to 
check that $dg\circ\mathbb{J}^f=\mathbb{J}_i\circ dg$.
%
%
%
\end{proof}

The previous Theorem implies, in particular, the existence of a biholomorphism between 
the two complex manifolds $(\mathbb{H}\setminus\mathbb{C}_{-i}^+,\mathbb{J})$ and $(\mathbb{C}^2,i)$.

\begin{remark}
 The function $g(q)=q^{-1}$ in the previous theorem, was found using the following idea. The constant OCS $\mathbb{J}_i$ is described by the hyperplane 
 $\{X_1=0\}\subset\mathbb{CP}^3$ (see~\cite[Remark 2.3]{salamonviac}) and so, starting from our lift $[1,u,v,0]$ after changing the first two coordinates with the second two and
 dividing everything by $v(\neq 0)$, we obtain $[1,0,v^{-1},v^{-1}u]$ that projects to $[1,v^{-1}(1+uj)]$, but now $v^{-1}(1+uj)=((1+uj)^{-1}v)^{-1}=(f(q))^{-1}$.
\end{remark}

\begin{remark}
The last theorem and construction can be obtained using the following function as well: 
$f:\mathbb{H}\setminus\mathbb{R}\rightarrow\mathbb{H}$, defined as
\begin{equation*}
f(\alpha+I\beta)=(Cx+D)^{-\cdot}\cdot(Ax+B)\frac{(1-Ii)}{2},
\end{equation*}
with  $\left(\begin{array}{cc}
A & B \\
C & D
\end{array}\right)\in SL(2,\mathbb{R})$, $x=\alpha+I\beta$ and $z=\alpha+i\beta$. In fact, if we remove from the domain of this function the semislice $\mathbb{C}_{-i}^{+}$ over which is 
equal to zero, $f$ is open and injective and its image is equal again to $\{q\in\mathbb{H}\,\mid\,q_1>0\}$. With easy computations one obtains that 
\begin{equation*}
 f(\alpha+I\beta)=\begin{cases}
 \displaystyle\frac{(a+1)}{2\|Cz+D\|^{2}}[CA\|z\|^{2}+DB+(BC+AD)\alpha]=q_0\\
  \\
\displaystyle\frac{(a+1)}{2\|Cz+D\|^{2}}\beta=q_1\\
 \\
 \displaystyle\frac{(b\beta-c[CA\|z\|^{2}+DB+(BC+AD)\alpha])}{2\|Cz+D\|^{2}} =q_2\\
 \\
\displaystyle\frac{c\beta+b[CA\|z\|^{2}+DB+(BC+AD)\alpha]}{2\|Cz+D\|^{2}} =q_3,
 \end{cases},\quad z=\alpha+i\beta,
\end{equation*}
and, with the same argument in the proof of Theorem~\ref{image}, one obtains that $q_{1}>0$ and, for any values of $q_{0},q_{1}$, each $q_{2}$ and $q_{3}$ can be reached.
Now, on the remaining first two components the function is exactly equal to 
\begin{equation*}
\frac{A(\alpha+i\beta)+B}{C(\alpha+i\beta)+D}=\frac{q_{0}+iq_{1}}{(a+1)}.
\end{equation*}
Now, since $A,B,C,D$ are taken such that the matrix they describe is in $SL(2,\mathbb{R})$,
and since the function on the left describes an automorphism of the upper half complex space, it turns out that each  $q_{0}$ and $q_{1}>0$ can be reached.
The twistor lift of this function is 
 \begin{equation*}
 \begin{array}{rclc}
  \tilde{f}: & \mathcal{Q}^+\cap\pi^{-1}(\mathbb{H}\setminus\mathbb{C}_{-i}^+) & \rightarrow & \mathbb{CP}^3\\
     & [1,u,v,uv] & \mapsto & [1,u,\frac{Av+B}{Cv+D},0].
 \end{array}
\end{equation*}
\end{remark}

In the next remark we will show an idea that we have not explored completely but that might be a starting point
for some future considerations.

\begin{remark}
The twistor lift in Equation~\eqref{twistorf}, extends to a holomorphic mapping $\tilde{f}:\mathcal{Q}\rightarrow \mathbb{CP}^3$ by allowing $v$ to take values in $\mathbb{C}$ rather than 
just in $\mathbb{C}^+$. However, even if $\pi\circ \tilde{f}=f\circ \pi$ on  $\mathcal{Q}^+\cap\pi^{-1}(\mathbb{H}\setminus\mathbb{C}_{-i}^+)$,  
$$
\begindc{\commdiag}[50]
\obj(0,10)[aa]{$\mathcal{Q}^+$}
\obj(20,10)[bb]{$\{X_3=0\}$}
\obj(0,0)[cc]{$\mathbb{H}\setminus\mathbb{R}$}
\obj(20,0)[dd]{$\{q_1>0\}$}
\mor{aa}{bb}{$\tilde{f}$}
\mor{aa}{cc}{$\pi$}[-1,0]
\mor{bb}{dd}{$\pi$}
\mor{cc}{dd}{$f$}
\enddc
$$
this does not imply that the graph will commute once $\tilde{f}$ is extended. In fact we will have the following diagram, 
$$
\begindc{\commdiag}[50]
\obj(0,10)[aa]{$\mathcal{Q}$}
\obj(20,10)[bb]{$\{X_3=0\}$}
\obj(0,0)[cc]{$?$}
\obj(20,0)[dd]{$?$}
\mor{aa}{bb}{$1:1$}
\mor{aa}{cc}{$2:1$}[-1,0]
\mor{bb}{dd}{$1:1$}
\mor{cc}{dd}{$^*1:2^*$}[1,\dotline]
\enddc
$$
where, the numbers upon the arrows are intended as \textit{generically} and we do not know \textit{a priori} 
what to put in the two vertices below and what is the meaning of the arrow that connects them. 
Also this arrow must represent something which 
behaves like $1:2$. This of course cannot be possible and suggest the possibility of approaching  the issue using \textit{multi-valued} functions. Anyway this example seems enough easy 
to be studied directly. So, first of all, we need to construct the ``ghost function'' that realizes the second part of that $1:2$ cited before.
Therefore, when we extend $\tilde{f}$ to the whole $\mathcal{Q}$ we need the function that realizes the lifting $\tilde{f}[1,u,v,uv]=[1,u,v,0]$, for $v\in\mathbb{C}^-$.

In a certain sense, once you decompose the function in its four real components, the variable 
$\beta$ and $I$ are able to move independently.
So, depending on the interpretation one gives to the point $x(=\alpha+I\beta=\alpha+(-I)(-\beta))$,
the representation of the function $f$ in its vectorial form, returns two different values.
\end{remark}

\section{Conclusion and future works}
In this paper, after a brief review of slice regular functions and twistor space of $\mathbb{S}^{4}$,
we have shown that the theory introduced in~\cite{gensalsto} linking these two fields,
can be extended to all slice regular functions.
Moreover, the techniques used to extend the theory of slice regular functions to domains with
empty intersection with the real line were used to show a number of new results, such as
the second part of Theorem~\ref{thmlift}.
In this framework we have proved that this theory is effective in giving coordinates for 
the quadric surfaces in the conformal classification of non-singular 
quadrics in Theorem~\ref{classification} and we gave a projective classification of the remaining
quadrics and cubics that can be reached by the lift of a slice regular function.
Finally we have used all this material to show the effectiveness of these instruments 
in the task of finding an explicit biholomorphism between two particular complex 4-manifolds.

We hope to obtain further results in this direction. Some open problems
that we would like to explore in the future (some of them are, actually, work in progress) regard 
the conformal classification of remaining (singular) quadrics and cubics. Strongly linked to this
problem, we plan to solve the ambiguous ``multifunction'' issue contained in the last remark.
Furthermore we would like to further study the geometry of lines expressed by the twistor 
transform. In particular it would be interesting to classify other classes of rational curves
over the Grassmannian.

\section*{Acknowledgements}
The content of this paper was mainly developed during my Ph.D. studies at the Department
of Mathematics of the University of Trento. For this reason I would like to thank my supervisor 
Prof. A. Perotti and all the people (both academic and non-academic), who helped me in this work.
A special mention goes to Prof. G. Gentili, Dr. C. Stoppato, Prof. I. Sabadini, Prof. F. Colombo, Prof. F. Vlacci, Dr. G. Sarfatti, Prof. C. Bisi
and to Prof. E. Ballico for the useful discussions about classical algebraic geometry.
Furthermore, I thank King's College of London where part of the present work was done
and in particular Prof. S. Salamon for his useful suggestions. 
Moreover, this version of the manuscript was highly improved also thanks to the accuracy and the indications of the anonymous referee
and of Prof. C. de Fabritiis whose honest comments will always be welcome.

I was also partially supported by the Project  FIRB ``Geometria Differenziale e Teoria Geometrica delle Funzioni'', and by GNSAGA of INdAM.

\section*{References}

\end{document}